\theoremstyle{plain}
\newtheorem{theorem}{Theorem}
\newtheorem{lemma}[theorem]{Lemma}
\newtheorem{proposition}[theorem]{Proposition}
\newtheorem{corollary}[theorem]{Corollary}
\theoremstyle{definition}
\newtheorem{definition}[theorem]{Definition}
\newtheorem{example}[theorem]{Example}
\newtheorem{assumption}{Assumption}
\theoremstyle{remark}
\newtheorem{remark}[theorem]{Remark}
\renewcommand{\epsilon}{\varepsilon}
\DeclareMathOperator{\diam}{diam}
\DeclareMathOperator{\vol}{vol}
\DeclareMathOperator*{\TOL}{TOL}
\newcommand{\M}{\mathcal{M}}
\newcommand{\R}{\mathbb{R}}
\newcommand{\E}{\mathbb{E}}
\newcommand{\GME}{C_{\operatorname{GME}}}
\newcommand{\widehatGME}{\widehat{C}_{\operatorname{GME}}}
\renewcommand{\cite}{\citep}
\title{Geometry-Preserving Encoder/Decoder\\ In Latent Generative Models}
\author{Wonjun Lee\thanks{Department of Mathematics, The Ohio State University. \texttt{lee.8222@osu.edu}}\and
Riley C. W. O'Neill\thanks{Department of Mathematics, University of Minnesota. \texttt{oneil571@umn.edu}}\and
Dongmian Zou\thanks{Zu Chongzhi Center for Mathematics and Computational Sciences, Duke Kunshan University. \texttt{dongmian.zou@duke.edu}}\and
Jeff Calder\thanks{Department of Mathematics, University of Minnesota. \texttt{jwcalder@umn.edu}}\and
Gilad Lerman\thanks{Department of Mathematics, University of Minnesota. \texttt{lerman@umn.edu}}}
\begin{document}

\maketitle

\begin{abstract}

    Generative modeling aims to generate new data samples that resemble a given dataset. When using diffusion models for this task, one of the main challenges is solving the problem in the input space, which tends to be very high-dimensional. To address this, recent approaches solve diffusion models in the latent space through an encoder that maps from the data space to a lower-dimensional latent space, improving training efficiency and achieving state-of-the-art results. The variational autoencoder (VAE) is the most commonly used encoder/decoder framework in this domain, known for its ability to learn latent representations and generate data samples. In this paper, we introduce a novel encoder/decoder framework with theoretical properties distinct from those of the VAE, specifically designed to preserve the geometric structure of the data distribution. We demonstrate the significant advantages of this geometry-preserving encoder in the training process of both the encoder and decoder. Additionally, we provide theoretical results proving convergence of the training process, including convergence guarantees for encoder training, and results showing faster convergence of decoder training when using the geometry-preserving encoder.
\end{abstract}

\section{Introduction}\label{sec:intro}
Generating complex, high-dimensional continuous data, such as high-fidelity images or physical measurements, is a central challenge in machine learning. Because operating directly in the raw data space (e.g., across millions of pixels) is computationally overwhelming, state-of-the-art frameworks like latent diffusion models \cite{rombach2022high} rely on a two-stage approach. First, an encoder maps the complex data into a lower-dimensional, more manageable representation called a "latent space." Then, a decoder translates points from this compressed space back into the original high-dimensional format. Once this encoder-decoder bridge is built, advanced generative systems can efficiently synthesize new data simply by exploring the small latent space and using the decoder to bring those new points to the data space.

However, the efficiency and quality of this entire process depend entirely on how well the encoder organizes the latent space. Traditionally, the encoder and decoder are trained simultaneously, most commonly through the Variational Autoencoder (VAE) framework \cite{kingma2013auto}. VAEs attempt to balance two competing goals: minimizing the reconstruction error of the decoder, while using a statistical penalty (the Kullback-Leibler divergence) to force the latent data into a pre-defined shape, such as a standard Gaussian distribution.

This joint training together with the statistical penalty can introduce instability and often alters the intrinsic geometry of the data. Forcing the latent space to conform to an arbitrary statistical prior warps the global structural relationships between data points. When the latent space becomes distorted, the decoder requires significantly more optimization capacity to invert the representation, which can lead to training inefficiencies, potential information loss (such as posterior collapse), and degraded sample quality.

To address these limitations, we rethink the role of the encoder. We propose that a well-designed encoder must satisfy two primary criteria: first, it must remain true to the underlying structure of the data and maintain it accurately within the latent space; second, it must provide a representation that enables favorable optimization conditions for the decoder. Rather than forcing a compromise between reconstruction and an arbitrary statistical shape, we explicitly design the encoder to preserve data geometry. By establishing these geometric properties independently of the decoder, we can decouple the training process. The encoder is trained in isolation, and its precomputed representations are subsequently used to train the decoder. This decoupled approach offers explicit structural guarantees for the latent space, which can be more challenging to achieve through standard joint training where both components are optimized simultaneously.

To this end, we introduce the Geometry-Preserving Encoder/Decoder (GPE) framework. Instead of standard joint optimization, our approach trains the encoder using a Gromov-Monge embedding loss~\cite{lee2023monotone} to preserve the global pairwise geometric distances of the original data. In this paper, we demonstrate both theoretically and empirically that preserving this geometric structure directly yields better optimization performance for the decoder. By ensuring that macroscopic data relationships are proportionally mirrored in the latent space, we provide a stable, geometrically aligned foundation. We then train the decoder sequentially based purely on reconstruction. Because the latent space already reflects the true structure of the data, training the decoder and any subsequent generative models operating in that latent space becomes highly efficient. Unlike classical manifold learning techniques (e.g., Isomap~\cite{tenenbaum1997mapping} or MDS~\cite{kruskal1964multidimensional,carroll1998multidimensional,borg2005modern}) that focus purely on representation without consideration for a decoder, our framework is specifically designed to construct a globally coherent latent space that guarantees the optimization efficiency of downstream generative decoding.

Our specific contributions are as follows:

\begin{itemize}
    \item We propose an encoder-decoder architecture where the primary objective of the encoder is to maintain the true data geometry within the latent space. By decoupling the training into sequential steps (encoder first, then decoder), we create an unwarped latent space that simplifies the optimization and training of the decoder (Section~\ref{sec:gpe}).
    
    \item For the encoder, we prove that optimization stability inherently improves as the Gromov-Monge embedding loss decreases, establishing the improved conditioning of the loss landscape. Crucially, we demonstrate that as the encoder adheres more closely to the geometry-preserving condition, effectively functioning as a \emph{local isometry}, the condition number of the decoder's optimization landscape is bounded. This theoretically guarantees both higher stability and an accelerated convergence rate for training the decoder, showing that geometric preservation yields better optimization (Section~\ref{sec:theory-gpe}).
    
    \item We establish explicit upper bounds for the Wasserstein distance between the true data distribution and the final generated distribution. These bounds analytically connect the end-to-end performance of the generative system to the GPE encoder's geometry preservation, the decoder's reconstruction accuracy, and the alignment of the latent spaces (Section~\ref{sec:gpe-lgm}).
    
    \item We demonstrate through comprehensive experiments that the GPE framework converges faster and achieves lower reconstruction errors than standard VAEs. Furthermore, we show that our global geometry-preserving approach provides a superior foundation for generation tasks, yielding high-quality results on complex continuous datasets (Section~\ref{sec:numeric}). The complete implementation is publicly available on GitHub\footnote{https://github.com/wonjunee/GPE\_codes}.
\end{itemize}

\subsection{Related works}

\subsubsection{Encoder-decoder frameworks for generative tasks}
Encoder-decoder architectures are widely used in generative modeling. By projecting high-dimensional data into a lower-dimensional latent space, frameworks such as Variational Autoencoders (VAEs)~\cite{kingma2013auto, van2017neural} and latent-augmented GANs~\cite{larsen2016vaegan, srivastava2017veegan, gao2020zero, makkuva2020optimal} reduce data dimensionality. This approach is frequently applied in Latent Diffusion Models (LDMs)~\cite{rombach2022high, pandey2021vaes, wang2021dydiff, vahdat2021score, gu2022vector, dao2023flow, kim2024pagoda}, where executing the generative process directly in pixel space is computationally expensive. The optimization efficiency and sample quality of the generative decoder depend on the structure of the latent space produced by the encoder. If the latent space configuration is irregular, the decoder requires more optimization to reconstruct or map the data effectively.

\subsubsection{Geometry preservation for generative modeling} 
To structure this latent space, recent works have explored geometry-preserving encoders. In a generative context, this differs from classical manifold learning tasks aimed purely at representation or visualization. Classical manifold unrolling methods often rely on computing intrinsic geodesic distances to capture underlying geometry. However, calculating exact intrinsic geodesics from discrete data in high dimensions is computationally expensive and difficult to scale. Consequently, alternative frameworks focus on recovering low-dimensional geometry or enforcing strictly \emph{local} isometries between the input and latent spaces~\cite{falorsi2019reparameterizing, connor2021variational, chen2021neighborhood, huh2024isometric, kim2024gamma, kato2020rate, gropp2020isometric, lee2022regularized, refId0}. Yet, local methods do not guarantee the global structural cohesion required when sampling from a standard generative prior. Instead, our goal is to construct an encoder that provides an approximate \emph{global} isometry using ambient metrics. This ensures that macroscopic data relationships are proportionally mirrored in the latent space, providing a stable foundation for the subsequent generative decoder. Theoretical frameworks for the existence of such global geometry-preserving maps under various cost functions have been formalized by~\cite{murray2024probabilistic}.

\subsubsection{Multi-Dimensional Scaling and Gromov-Monge alignments}
The objective of preserving global pairwise distances naturally connects to classical Multi-Dimensional Scaling (MDS)~\cite{kruskal1964multidimensional,carroll1998multidimensional, borg2005modern}, which directly optimizes discrete embedding points to match given pairwise distances. In the continuous setting of probability measures, this structural alignment is traditionally formulated via the Gromov-Wasserstein (GW) distance~\cite{memoli2011gromov, peyre2016gromov}, which has seen broad application in aligning metric spaces for generative modeling~\cite{bunne2019learning, titouan2019sliced, nakagawa2023gromovwasserstein}. While conceptually similar, MDS and GW are fundamentally distinct: MDS searches for optimal target points, whereas GW searches for an optimal coupling or map that best preserves pairwise distances between two spaces. However, one can easily connect the two concepts. By considering the distance $d_{GW}(\mu, T_\#\mu)$ and optimizing over the pushforward map $T$, the GW formulation becomes identical to a continuous MDS problem.

The Gromov-Monge Embedding (GME) loss functions as a deterministic, map-based counterpart to these concepts, combining the pairwise distance preservation objective of MDS with the Gromov-Monge mapping formulation~\cite{lee2023monotone, yang2024improving}. While this objective shares a conceptual connection with the Gromov-Wasserstein (GW) distance, our framework does not rely on optimal transport theory. Instead of computing complex probabilistic couplings between spaces, the encoder is trained entirely via the deterministic GME loss, a choice that sidesteps the computational overhead associated with optimal transport. 

Consequently, the cost function within the GME loss is designed to maximize optimization efficiency, which we validate through both theoretical analysis and numerical experiments. By avoiding the complexities of GW theory, we are able to establish standalone bounds on training stability and decoder efficiency, as well as an explicit upper bound on the error of the final generated distribution using direct probability bounds.

\subsection{Structure of the paper}

This paper is organized as follows. We begin in Section~\ref{sec:asmpt}, where we present the notations that will be used throughout the paper. In Section~\ref{sec:gpe}, we introduce the geometry-preserving encoder/decoder (GPE) framework and contrast it with the Variational Autoencoder (VAE) framework. This section highlights the key differences in how each framework handles the embedding of data distributions into latent space, with a focus on the preservation of geometric structure.

Section~\ref{sec:theory-gpe} presents our theoretical analysis of the GPE framework, including the local isometric property of the GPE encoder and the efficiency of computing the GPE encoder and decoder from the proposed formulations. We provide detailed proofs and discuss the implications of these results. Following this, in Section~\ref{sec:gpe-lgm}, we briefly review the latent diffusion model and extend our theoretical analysis to the GPE-based latent diffusion model. Here, we demonstrate how the framework's performance, as measured by the Wasserstein distance between the underlying and generated distributions, improves with enhanced geometry-preserving properties of the encoder and decoder.

In Section~\ref{sec:alg}, we outline the algorithm for computing the GPE encoder and decoder. Section~\ref{sec:numeric} follows with numerical results that validate the theoretical findings from the preceding sections. We present experiments on both artificial datasets and real-world datasets, such as MNIST \cite{deng2012mnist}  and CelebA-HQ ($256 \times 256$) \cite{celebahq}, and compare the performance of the GPE framework with the VAE framework in both reconstruction and generation tasks.

Finally, the paper concludes in Section~\ref{sec:conclusion}, where we summarize the key findings, discuss the broader implications of our results, and suggest potential directions for future research.

\section{Notation}\label{sec:asmpt}
Let $\mathcal{M} \subset \mathbb{R}^D$ be a compact, $m$-dimensional smooth Riemannian submanifold ($m \ll D$). We equip $\mathcal{M}$ with a Riemannian metric $g$, an intrinsic inner product on the tangent space $\mathcal{T}_x\mathcal{M}$ that allows us to measure distances and volumes directly on the curved manifold \cite{john2012introduction}. Associated with this geometry is the exponential map $\exp_x$, which locally maps tangent vectors from $\mathcal{T}_x\mathcal{M}$ back onto the manifold. We assume the data distribution $\mu \in \mathbb{P}(\mathcal{M})$ admits a density $\rho$ with respect to the Riemannian volume measure $\mathrm{vol}_{\mathcal{M}}$ (which is naturally induced by $g$), bounded by positive constants
\begin{equation} \label{eq:asmpt-mu}
    0 < \rho_{\min} \leq \rho(x) \leq \rho_{\max}, \quad \forall x \in \mathcal{M}.
\end{equation}
Let the latent distribution $\nu$ be the $d$-dimensional standard normal distribution ($d \geq m$). We define the encoder $T: \mathcal{M} \to \mathbb{R}^d$ as a smooth embedding (a diffeomorphism onto its image $T(\mathcal{M})$). The pushforward measure $T_\#\mu$ is defined by the identity
\begin{equation*}
    \int_{\mathcal{M}} f(T(x)) \, d\mu(x) = \int_{\mathbb{R}^d} f(y) \, dT_\#\mu(y)
\end{equation*}
for all continuous functions $f: \mathbb{R}^d \to \mathbb{R}$. Because $T$ is a diffeomorphism onto its image, $T_\#\mu$ is naturally absolutely continuous with respect to the volume measure $\mathrm{vol}_{T(\mathcal{M})}$ on the image submanifold $T(\mathcal{M}) \subset \mathbb{R}^d$.

\section{Local and Global Isometry}
The objective of our encoder design is not classical representation learning or dimensionality reduction, but the construction of a latent space tailored for \emph{latent generative models}. Traditional embedding methods focus on the low-dimensional representation as the final product without accounting for how easily a decoder can map those latent points back to the original data manifold. In contrast, our framework aims to find an encoder configuration that improves the training efficiency and optimization of the \emph{decoder}.

Classical manifold learning techniques, such as Isometric Feature Mapping (Isomap) \cite{tenenbaum1997mapping}, Locally Linear Embedding (LLE)~\cite{roweis2000nonlinear}, and Laplacian Eigenmaps~\cite{belkin2003laplacian}, focus on local geometry or rely on calculating global intrinsic geodesic distances. While geodesic-based approaches can capture manifold structures, computing intrinsic distances on high-volume, high-dimensional discrete data is computationally expensive, difficult to scale, and does not account for downstream decoder optimization.  When global alignment must emerge by chaining local neighborhoods, it can lead to topological drift and warped global layouts. This forces the decoder to learn convoluted, non-smooth mappings, which can reduce training efficiency.

To address these limitations, we introduce a scalable approach that targets global geometry preservation using ambient metrics, which consequently enforces local isometry. Because exact global isometry is topologically restricted for complex manifolds, we adopt a relaxed global notion. This formulation provides a mathematical advantage analogous to selecting a viscosity solution for the Hamilton-Jacobi equation. The condition for local isometry is under-determined and admits infinitely many weak solutions, many of which exhibit global distortions. Enforcing a relaxed version of global isometry on top of local properties acts as a selection mechanism, filtering out poorly behaved configurations. While the solution space remains infinite due to standard translation and rotation invariances, this global constraint restricts the encoder to a structured class of geometrically consistent mappings. By ensuring macroscopic distances are proportionally mirrored in the latent space, we preserve local isometry in a computationally scalable manner, providing a smoother target space that improves decoder training efficiency and overall numerical stability.

\paragraph{Geometry-preserving map}

We call an encoder \emph{$\alpha$–geometry–preserving} if it is an $\alpha$–bi-Lipschitz map $T:\mathcal M\to\mathbb R^d$ with $\alpha\ge1$, i.e.,
\begin{equation}\label{eq:theta-bi-lip}
\frac{1}{\alpha}\,\|x-x'\|\ \le\ \|T(x)-T(x')\|\ \le\ \alpha\,\|x-x'\|\qquad \forall\,x,x'\in\mathcal M.
\end{equation}
In practice, guaranteeing \eqref{eq:theta-bi-lip} uniformly for \emph{all} pairs for $\alpha\approx 1$ is challenging and may not even be achievable. We therefore adopt a high-probability relaxation that still provides the necessary global control for generative modeling.

\begin{definition}[Weak $\alpha$–bi-Lipschitz]\label{def:weak-alpha-bi}
Let $T:\mathcal M\to\mathbb R^d$ and $A\subset\mathcal M^2$. We say that $T$ is \emph{weakly $\alpha$–bi-Lipschitz on $A$} if, for all $(x,x')\in A$,
\begin{equation*}
\frac{1}{\alpha^2}\,\|x-x'\|^2-\Bigl(1-\frac{1}{\alpha^2}\Bigr)
\ \le\ \|T(x)-T(x')\|^2
\ \le\ \alpha^2\,\|x-x'\|^2+\bigl(\alpha^2-1\bigr).
\end{equation*}
\end{definition}

The additive slack terms make the constraint meaningful at both small and large separations. The next proposition shows that, on sufficiently separated pairs, the weak condition recovers a standard bi-Lipschitz bound.

\begin{proposition}[Bi-Lipschitz on well-separated pairs \cite{lee2023monotone}]\label{prop:weak-to-strong}
Let $T:\mathcal M\to\mathbb R^d$ be weakly $\alpha$–bi-Lipschitz on $A\subset\mathcal M^2$. Fix $0<\gamma<1$ and define
\begin{equation*}
\tilde A\ :=\ \Bigl\{(x,x')\in A:\ \|x-x'\|^2\ \ge\ \frac{\alpha^2-1}{\gamma}\Bigr\}.
\end{equation*}
Then, for all $(x,x')\in\tilde A$,
\begin{equation*}
\frac{1-\gamma}{\alpha^2}\,\|x-x'\|^2\ \le\ \|T(x)-T(x')\|^2\ \le\ \bigl(\alpha^2+\gamma\bigr)\,\|x-x'\|^2.
\end{equation*}
\end{proposition}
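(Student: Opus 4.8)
The plan is to treat the two inequalities in Definition~\ref{def:weak-alpha-bi} separately and, in each case, absorb the additive slack into a multiplicative term using the separation hypothesis that defines $\tilde A$. The single observation driving everything is that the condition $(x,x')\in\tilde A$ is exactly the statement $\alpha^2-1\le\gamma\,\|x-x'\|^2$; once this is in hand, each bound follows by a one-line substitution.

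First I would handle the upper bound. Starting from the right inequality $\|T(x)-T(x')\|^2\le\alpha^2\|x-x'\|^2+(\alpha^2-1)$ and replacing the constant $\alpha^2-1$ by the larger quantity $\gamma\,\|x-x'\|^2$, we get $\|T(x)-T(x')\|^2\le(\alpha^2+\gamma)\,\|x-x'\|^2$ directly.

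Next, for the lower bound, I would rewrite the left inequality of Definition~\ref{def:weak-alpha-bi} as $\|T(x)-T(x')\|^2\ge\frac{1}{\alpha^2}\|x-x'\|^2-\frac{\alpha^2-1}{\alpha^2}$, then use $\alpha^2-1\le\gamma\,\|x-x'\|^2$ to bound the subtracted term by $\frac{\gamma}{\alpha^2}\|x-x'\|^2$, which yields $\|T(x)-T(x')\|^2\ge\frac{1-\gamma}{\alpha^2}\|x-x'\|^2$. Since $0<\gamma<1$, the factor $(1-\gamma)/\alpha^2$ is strictly positive, so this is a genuine lower Lipschitz bound, completing the argument.

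Honestly, there is no real obstacle: the proposition is a bookkeeping lemma whose only content is the choice of the threshold $(\alpha^2-1)/\gamma$, made precisely so that the additive errors in the weak condition become negligible relative to a pair's separation. The one point worth a remark is that the bound holds for every $(x,x')\in\tilde A$ with the \emph{same} $\gamma$, i.e.\ the threshold is uniform over the set; but this is built into the definition of $\tilde A$, so nothing further is required. I would also note in passing that sending $\gamma\to0$ drives the threshold to infinity, recovering the heuristic that strict bi-Lipschitz control is only available at scales large compared with $\sqrt{\alpha^2-1}$.
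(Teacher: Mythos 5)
Your argument is correct and is the natural (and essentially only) way to prove this: rewrite the separation condition as $\alpha^2-1\le\gamma\,\|x-x'\|^2$, then substitute into each side of the weak bi-Lipschitz inequality. The paper itself cites this proposition from \cite{lee2023monotone} without reproducing a proof, but what you have written is exactly the expected bookkeeping, and both directions check out.
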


\noindent
In words, the weak bi-Lipschitz condition yields \emph{global} (scale–aware) control: it implies standard bi-Lipschitz bounds on all pairs beyond a data–dependent separation threshold, while permitting mild slack at very small scales.
This is the regime targeted by our GPE training objective, which operates on pairwise terms to enforce high–probability global geometry preservation.

Next, we establish the existence of a weak $\alpha$-bi-Lipschitz map for an $m$-dimensional smooth compact manifold embedded in $\R^D$. The construction uses a finite $\epsilon$-net and a global Johnson-Lindenstrauss (JL) projection. By mapping points to their nearest net anchors, the JL step tightly preserves discrete anchor distances in $\R^d$ (for sufficiently large $d$). The continuous map then inherits these bounds globally, accumulating only a small, controlled additive error.

For clarity, our theorem and proof use linear distances rather than the standard squared distances (which bound the ratio $\frac{1+\|T(x)-T(y)\|^2}{1+\|x-y\|^2}$). Deriving the squared bounds from the linear ones is a straightforward algebraic step with a slight adjustment on the quantity $\alpha$. This deliberate simplification avoids cumbersome cross-terms and directly highlights the core geometric intuition.

\begin{theorem}[Existence of a weak $\alpha$-bi-Lipschitz map]\label{thm:weak-bilip}
Let $\M\subset\R^D$ be a compact $m$ dimensional submanifold. For any target distortion $\alpha > 1$, there exists a target dimension $d = O\big(\alpha^2 m \log(\frac{\alpha+\sqrt{D}}{\alpha-1})\big)$ and a map $T:\M\to\R^d$ such that for all $x,y\in\M$, the following weak $\alpha$-bi-Lipschitz condition holds globally
\begin{equation}\label{eq:weak-alpha}
\alpha^{-1}\|x-y\| - (1 - \alpha^{-1}) \ \leq\ \|T(x) - T(y)\| \ \leq\ \alpha \|x-y\| + \alpha - 1.
\end{equation}
\end{theorem}

From an optimization perspective, finding a weak $\alpha$-bi-Lipschitz map is much easier than enforcing a strict $\alpha$-bi-Lipschitz constraint given the same value $\alpha$. Strict conditions require multiplicative distance preservation at all scales, which imposes rigid structural constraints and severely restricts the solution space. In contrast, the weak formulation uses additive error terms to tolerate microscopic distortions, expanding the space of admissible functions. Because this solution space is so much larger, the optimization landscape becomes smoother. This makes finding a valid map both mathematically and practically advantageous, allowing the algorithm to easily capture the global geometry of $\M$.

\section{Geometry–Preserving Encoder/Decoder (GPE)}\label{sec:gpe}

Next, we discuss how to compute a weak $\alpha$-bi-Lipschitz encoder. Before we introduce our framework, we first review the Gromov-Monge embedding (GME) cost function introduced in \cite{lee2023monotone,yang2024improving}
\[
    \GME(T)
    := \mathbb{E}_{(x,x')\sim \mu^2} \Big(c_X(x,x') - c_Y(T(x),T(x'))\Big)^2 ,
\]
where $c_X:\mathcal{M}\times\mathcal{M}\rightarrow \mathbb{R}$ and $c_Y:\mathbb{R}^d \times \mathbb{R}^d \rightarrow \mathbb{R}$ are cost functions defined in the data space and the latent space, respectively. 
Throughout this paper, we fix 
\[
    c_X(x,x') = \log(1+\|x-x'\|^2) \quad \text{and} \quad c_Y(y,y') = \log(1+\|y-y'\|^2).
\]
Thus, the GME cost function we consider takes the form
\begin{equation}\label{eq:gme-cost}
    \GME(T)
    := \mathbb{E}_{(x,x')\sim \mu^2} \left[ \log\left(\frac{1+\|T(x)-T(x')\|^2}{1+\|x-x'\|^2}\right)^2\right].
\end{equation}
These cost functions are motivated by the slow growth of the logarithm, which enables tight bounds on the Hessian of the cost function and thereby accelerates optimization; we analyze this in detail in \Cref{sec:stability-encoder}. Furthermore, in \cite{lee2023monotone}, it is shown that with this particular choice of cost functions, if $T$ satisfies a small value of the GME cost, then it follows that $T$ satisfies the weak $\alpha$-bi-Lipschitz condition for the majority of pairs of points in $\mathcal{M}^2$. We review this result in the following theorem.

\begin{theorem}[\cite{lee2023monotone}]\label{thm:T-bi-lip-mu}
    Fix $\alpha\geq1$ and $\epsilon_{\rm{GME}} \geq 0$. Given a distribution $\mu \in \mathbb{P}(\mathcal{M})$, suppose $T:\mathcal{M} \rightarrow \mathbb{R}^d$ satisfies $\GME(T) \leq \epsilon_{\rm{GME}}$. Then, $T$ satisfies the weak $\alpha$-bi-Lipschitz condition on a subset $A \subset \mathcal{M}^2$ where $A$ satisfies
    \begin{align*}
        \mu^2\left(A\right) > 1 - \frac{\epsilon_{\rm{GME}}}{4(\log\alpha)^2}.
    \end{align*}
\end{theorem}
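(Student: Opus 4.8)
The plan is to reduce the claim to a single application of Markov's inequality to the integrand of the GME cost, after checking an elementary pointwise equivalence between a bound on that integrand and the inequalities of Definition~\ref{def:weak-alpha-bi}. Concretely, I would write
\[
    f(x,x') \;:=\; \log\!\left(\frac{1+\|T(x)-T(x')\|^2}{1+\|x-x'\|^2}\right),
\]
so that by \eqref{eq:gme-cost} we have $\mathbb{E}_{(x,x')\sim\mu^2}[f(x,x')^2]=\GME(T)\le\epsilon_{\mathrm{GME}}$, and take as the candidate set
\[
    A \;:=\; \bigl\{(x,x')\in\mathcal{M}^2 : f(x,x')^2 < 4(\log\alpha)^2\bigr\} \;=\; \Bigl\{(x,x') : \tfrac{1}{\alpha^2} < \tfrac{1+\|T(x)-T(x')\|^2}{1+\|x-x'\|^2} < \alpha^2\Bigr\},
\]
where the last identity is obtained by exponentiating the bound $|f|<2\log\alpha$.

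For the measure bound, since $f^2\ge 0$ $\mu^2$-almost everywhere, Markov's inequality gives
\[
    \mu^2(\mathcal{M}^2\setminus A) = \mu^2\!\bigl(f^2 \ge 4(\log\alpha)^2\bigr) \le \frac{\mathbb{E}_{\mu^2}[f^2]}{4(\log\alpha)^2} \le \frac{\epsilon_{\mathrm{GME}}}{4(\log\alpha)^2},
\]
so $\mu^2(A)\ge 1-\epsilon_{\mathrm{GME}}/(4(\log\alpha)^2)$; the strict inequality in the statement comes from enlarging the threshold $2\log\alpha$ infinitesimally (or from the fact that $\epsilon_{\mathrm{GME}}$ is merely an upper bound), a point I would not dwell on. It then remains to verify that every pair in $A$ satisfies the weak $\alpha$-bi-Lipschitz condition: for $(x,x')\in A$, multiplying the chain $\alpha^{-2} < \frac{1+\|T(x)-T(x')\|^2}{1+\|x-x'\|^2} < \alpha^2$ through by the positive number $1+\|x-x'\|^2$ and subtracting $1$ yields exactly
\[
    \tfrac{1}{\alpha^2}\|x-x'\|^2 - \bigl(1-\tfrac{1}{\alpha^2}\bigr) < \|T(x)-T(x')\|^2 < \alpha^2\|x-x'\|^2 + (\alpha^2-1),
\]
which is the inequality in Definition~\ref{def:weak-alpha-bi}.

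There is essentially no analytic obstacle here — the proof is Markov plus bookkeeping — so the only thing requiring care is the matching of constants: one must use the threshold $2\log\alpha$ (hence the factor $4=2^2$ in the measure estimate) precisely because the logarithmic costs $c_X,c_Y=\log(1+\|\cdot\|^2)$ turn a multiplicative factor $\alpha^{\pm 2}$ on the quantities $1+\|\cdot\|^2$ into exactly the additive slacks $+(\alpha^2-1)$ and $-(1-\alpha^{-2})$ built into the weak bi-Lipschitz definition. Any other normalization of the threshold would either shrink $A$ unnecessarily or fail to reproduce Definition~\ref{def:weak-alpha-bi} on the nose; verifying this alignment is the one place where the specific form of the cost function is genuinely used.
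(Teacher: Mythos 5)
Your proof is correct. Since the paper itself does not reprove Theorem~\ref{thm:T-bi-lip-mu} but quotes it from \citet{lee2023monotone}, the task is to reconstruct the cited argument, and your reconstruction is exactly the standard one: write the GME cost as the second moment of $f(x,x') = \log\bigl((1+\|T(x)-T(x')\|^2)/(1+\|x-x'\|^2)\bigr)$, apply Markov's inequality with threshold $4(\log\alpha)^2$, and exponentiate to recover the two inequalities of Definition~\ref{def:weak-alpha-bi} pointwise on $A$. The algebra converting $|f|<2\log\alpha$ into the additive-slack weak bi-Lipschitz bounds is verified correctly, and you rightly flag that Markov only gives a non-strict bound $\mu^2(A)\ge 1-\epsilon_{\mathrm{GME}}/(4(\log\alpha)^2)$ rather than the stated strict inequality — a cosmetic discrepancy in the quoted statement (also moot when $\alpha=1$, where the bound is vacuous). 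No gaps; the one place where the log cost is genuinely used — turning multiplicative distortion of $1+\|\cdot\|^2$ into exactly the built-in additive slacks — is identified and checked.
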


\noindent We are now ready to present the geometry-preserving encoder/decoder (GPE) framework:

\paragraph{GPE Framework}
Given an empirical distribution $\hat\mu_n = \frac{1}{n}\sum^n_{i=1} \delta_{x_i}$, where $\{x_i\}$ are sampled from the data distribution $\mu \in \mathbb{P}(\mathcal{M})$:

\begin{enumerate}
    \item Compute the GPE encoder by solving the following minimization problem:
    \begin{equation}\label{eq:gme-min}
        \inf_{T:\mathcal{M}\rightarrow \mathbb{R}^d} \GME(T, \hat\mu_n).
    \end{equation}
    \item Once the GPE encoder is computed, calculate the GPE decoder by solving:
    \begin{equation*} 
        \min_{S:{T(\mathcal{M})}\rightarrow {\mathbb{R}^D}}  L_{\rm{rec}}(S,T)
    \end{equation*}
    where $L_{\rm{rec}}$ is a quadratic cost function defined as
    \[
        L_{\rm{rec}}(S,T)
        =\mathbb{E}_{x\sim\hat\mu_n} \|S\circ T(x) - x\|^2.
    \]
\end{enumerate}

Note that there are several differences in the usage of the GME cost between this paper and the works by \cite{lee2023monotone} and \cite{yang2024improving}. The key distinction is that the cost function in \eqref{eq:gme-cost} was used as a regularization term in a minimax problem within the GAN framework and hyperbolic neural networks to weakly enforce the geometry-preserving property on the encoder. However, in this paper, we use the GME cost as the sole minimization objective, making it crucial to achieve the smallest possible value, which implies the weak bi-Lipschitz condition on a large subset of the domain $\mathcal{M}^2$. This approach necessitates the development of a more refined theory for this variational formulation to establish stability and convergence results for the GME cost. We also discuss Hessian analysis to explore the smoothness properties and convergence results of the gradient descent algorithm applied to the GME cost. Additionally, given an encoder that minimizes the GME cost, we present new theorems for error bounds, demonstrating that this theory can be both useful and practical in the latent diffusion model when using the GPE framework.

\begin{remark}
  The encoder loss in \cref{eq:gme-cost} shares the foundational goal of Multidimensional Scaling (MDS): finding an embedding that preserves pairwise distances to approximate a $1$-bi-Lipschitz map. However, our GME formulation offers two critical advantages. First, classical MDS requires the entire dataset to compute the embedding, rendering it intractable for large-scale datasets like CIFAR-10 and CelebA (\Cref{sec:numeric}). In contrast, our neural network-based GME supports minibatch training, ensuring practical scalability.

Second, and most importantly, GME introduces a \emph{logarithmic} cost function that provides a profound computational and optimization advantage over the squared cost of standard MDS. For an $L$-Lipschitz embedding map $T$, the Hessian of the MDS loss grows polynomially as $\mathcal{O}(L^2)$, which rapidly degrades the condition number of the optimization landscape and forces computationally inefficient, vanishing step sizes. By contrast, the logarithmic formulation of GME exponentially dampens this curvature, strictly bounding the Hessian by $\mathcal{O}(\log L)$. This ensures a highly stable, well-conditioned optimization trajectory that remains computationally efficient even when $L$ becomes large, a theoretical property we directly validate in our empirical results (\Cref{subsec:isometry_experiments}).
\end{remark}

\subsection{Comparing GPE with VAE}\label{sec:comparison-gpe-vae}

The GPE framework fundamentally differs from the Variational Autoencoder (VAE) \cite{kingma2013auto} in both its objective and its treatment of the latent space. VAEs model the data distribution $\mu$ on $\mathcal{M} \subset \mathbb{R}^D$ using a probabilistic encoder $q(z \mid x)$ and decoder $p(x \mid z)$, trained by maximizing the Evidence Lower Bound (ELBO)
\[
\mathcal{L}_{\mathrm{ELBO}} := \mathbb{E}_{x \sim \mu} \Big[ \mathbb{E}_{z \sim q(z \mid x)} [\log p(x \mid z)] - \beta \, \mathrm{KL}(q(z \mid x) \, \| \, p(z)) \Big],
\]
where $p(z) = \mathcal{N}(0, I)$ is typically a fixed standard Gaussian prior. 

To connect this probabilistic formulation to our geometric analysis, it is highly instructive to consider the deterministic limit of the VAE (i.e., as the variance in both the approximate posterior and conditional likelihood approaches zero). In this regime, the stochastic mappings reduce to deterministic functions $T$ and $S$, and the VAE training implicitly simplifies to balancing a deterministic reconstruction error against a prior-matching penalty
\[
\min_{T, S} \Big\{ \mathbb{E}_{x \sim \mu}\|S(T(x)) - x\|^2 + \beta\, \mathrm{KL}(T_\#\mu \,\|\, \mathcal{N}) \Big\}.
\]
We note that this deterministic limit has been rigorously investigated in recent literature \cite{reizinger2022embrace}, which reveals that VAEs can exhibit an inductive bias toward orthogonal, geometry-preserving coordinate transforms under certain conditions. However, standard VAEs still force the macroscopic encoded distribution to fit a fixed, rigid prior ($T_\#\mu \approx \mathcal{N}$). 

Enforcing this strict prior matching often severely distorts the intrinsic data geometry. If the data manifold is topologically complex or clustered (e.g., a mixture of Gaussians), forcing it into a single unimodal Gaussian inevitably leads to significant geometric expansion or contraction, causing the bi-Lipschitz constant to explode. The following 1D example explicitly illustrates this phenomenon.

\begin{example}
Consider the case where $\mu$ is a well-separated two-component Gaussian mixture, $f_X(x)=\tfrac12\,\phi_\sigma(x-m)+\tfrac12\,\phi_\sigma(x+m)$ with $m\gg\sigma$, and the target $\nu$ is the standard normal distribution $f_Z(z)=\phi_1(z)$. The optimal monotone transport map $T$ pushing $\mu$ forward to $\nu$ satisfies the differential equation \cite{villani2009optimal, santambrogio2015optimal}
\begin{equation}\label{eq:quantile-derivative}
T'(x)\;=\;\frac{f_X(x)}{f_Z\bigl(T(x)\bigr)}.
\end{equation}
Near the barycentric region $x\approx 0$, the mixture density is exponentially small: $f_X(0)\approx \frac{1}{\sigma\sqrt{2\pi}} e^{-m^2/(2\sigma^2)}$. Meanwhile, symmetry implies $T(0) \approx 0$, so $f_Z(T(0)) \approx f_Z(0) = (2\pi)^{-1/2}$. Substituting these into \eqref{eq:quantile-derivative} yields
\[
T'(0)\ \approx\ \frac{1}{\sigma}\,e^{-m^2/(2\sigma^2)}\ \ll\ 1.
\]
This demonstrates that points mapped into the central peak of the Gaussian prior are violently contracted, destroying the natural distance structure between the separated clusters. By contrast, GPE drops the restrictive $T_\#\mu = \mathcal{N}$ constraint entirely, allowing the latent distribution to organically adapt and strictly preserve pairwise distances.
\end{example}

In contrast, the GPE framework allows for independent training of the encoder and decoder through
\begin{align*}
    &\inf_{T:\mathcal{M}\to\mathbb{R}^d} \GME(T),\quad
    \min_{S:{T(\mathcal{M})\to \mathbb{R}^D}} L_{\rm{rec}}(S,T).
\end{align*}
This independence simplifies the training process, eliminating the need to balance or trade off parameters. Each minimization problem can be analyzed separately, with both demonstrating stable and efficient training properties. Additionally, by \Cref{thm:T-bi-lip-mu}, minimizing the GME cost ensures that $T$ satisfies the weak $\alpha$-bi-Lipschitz condition with $\alpha$ being closer to 1, which also implies the regular bi-Lipschitz condition for well-separated pairs of points. The difference between the embedded distributions from the VAE and GPE encoders is also discussed in an experiment presented by \cite{lee2023monotone}.

\begin{figure}[!ht]
    \centering

\tikzset{every picture/.style={line width=0.75pt}} 

\begin{tikzpicture}[x=0.57pt,y=0.57pt,yscale=-1,xscale=1]

\draw    (269,292.5) -- (448.5,292.5) ;
\draw    (271,285.25) .. controls (331,285.25) and (338.91,223.93) .. (358.5,223.75) .. controls (378.09,223.57) and (391.5,285.25) .. (447.5,285.25) ;
\draw    (270.5,122.5) -- (450,122.5) ;
\draw    (272.5,115.25) .. controls (300,114.25) and (292.91,51.93) .. (312.5,51.75) .. controls (332.09,51.57) and (331.5,113.25) .. (358.5,113.75) ;
\draw    (358.5,113.75) .. controls (386,112.75) and (381.41,51.43) .. (401,51.25) .. controls (420.59,51.07) and (421,115.75) .. (448,116.25) ;
\draw    (360,130) -- (360.97,215.25) ;
\draw [shift={(361,218.25)}, rotate = 269.35] [fill={rgb, 255:red, 0; green, 0; blue, 0 }  ][line width=0.08]  [draw opacity=0] (8.93,-4.29) -- (0,0) -- (8.93,4.29) -- cycle    ;
\draw    (400,130) -- (380.59,227.06) ;
\draw [shift={(380,230)}, rotate = 281.31] [fill={rgb, 255:red, 0; green, 0; blue, 0 }  ][line width=0.08]  [draw opacity=0] (8.93,-4.29) -- (0,0) -- (8.93,4.29) -- cycle    ;
\draw    (320,130) -- (339.41,227.06) ;
\draw [shift={(340,230)}, rotate = 258.69] [fill={rgb, 255:red, 0; green, 0; blue, 0 }  ][line width=0.08]  [draw opacity=0] (8.93,-4.29) -- (0,0) -- (8.93,4.29) -- cycle    ;
\draw    (340,130) -- (351.12,216.27) ;
\draw [shift={(351.5,219.25)}, rotate = 262.66] [fill={rgb, 255:red, 0; green, 0; blue, 0 }  ][line width=0.08]  [draw opacity=0] (8.93,-4.29) -- (0,0) -- (8.93,4.29) -- cycle    ;
\draw    (380,130) -- (370.33,217.02) ;
\draw [shift={(370,220)}, rotate = 276.34] [fill={rgb, 255:red, 0; green, 0; blue, 0 }  ][line width=0.08]  [draw opacity=0] (8.93,-4.29) -- (0,0) -- (8.93,4.29) -- cycle    ;
\draw    (420.5,129.63) -- (394.65,246.32) ;
\draw [shift={(394,249.25)}, rotate = 282.49] [fill={rgb, 255:red, 0; green, 0; blue, 0 }  ][line width=0.08]  [draw opacity=0] (8.93,-4.29) -- (0,0) -- (8.93,4.29) -- cycle    ;
\draw    (438.5,130.63) -- (434.58,274.75) ;
\draw [shift={(434.5,277.75)}, rotate = 271.56] [fill={rgb, 255:red, 0; green, 0; blue, 0 }  ][line width=0.08]  [draw opacity=0] (8.93,-4.29) -- (0,0) -- (8.93,4.29) -- cycle    ;
\draw    (280,130) -- (284.41,273.75) ;
\draw [shift={(284.5,276.75)}, rotate = 268.24] [fill={rgb, 255:red, 0; green, 0; blue, 0 }  ][line width=0.08]  [draw opacity=0] (8.93,-4.29) -- (0,0) -- (8.93,4.29) -- cycle    ;
\draw    (300,130) -- (315.62,251.77) ;
\draw [shift={(316,254.75)}, rotate = 262.69] [fill={rgb, 255:red, 0; green, 0; blue, 0 }  ][line width=0.08]  [draw opacity=0] (8.93,-4.29) -- (0,0) -- (8.93,4.29) -- cycle    ;
\draw    (30,292.5) -- (209.5,292.5) ;
\draw    (31.5,122.5) -- (211,122.5) ;
\draw    (31,115.25) .. controls (58.5,114.25) and (51.41,51.93) .. (71,51.75) .. controls (90.59,51.57) and (90,113.25) .. (117,113.75) ;
\draw    (117,113.75) .. controls (144.5,112.75) and (139.91,51.43) .. (159.5,51.25) .. controls (179.09,51.07) and (179.5,115.75) .. (206.5,116.25) ;
\draw    (31,287) .. controls (58.5,286) and (51.41,223.68) .. (71,223.5) .. controls (90.59,223.32) and (90,285) .. (117,285.5) ;
\draw    (117,285.5) .. controls (144.5,284.5) and (139.91,223.18) .. (159.5,223) .. controls (179.09,222.82) and (179.5,287.5) .. (206.5,288) ;
\draw    (40,129.75) -- (40.98,265) ;
\draw [shift={(41,268)}, rotate = 269.59] [fill={rgb, 255:red, 0; green, 0; blue, 0 }  ][line width=0.08]  [draw opacity=0] (8.93,-4.29) -- (0,0) -- (8.93,4.29) -- cycle    ;
\draw    (60,129.75) -- (60.97,215) ;
\draw [shift={(61,218)}, rotate = 269.35] [fill={rgb, 255:red, 0; green, 0; blue, 0 }  ][line width=0.08]  [draw opacity=0] (8.93,-4.29) -- (0,0) -- (8.93,4.29) -- cycle    ;
\draw    (80,129.75) -- (80.97,215) ;
\draw [shift={(81,218)}, rotate = 269.35] [fill={rgb, 255:red, 0; green, 0; blue, 0 }  ][line width=0.08]  [draw opacity=0] (8.93,-4.29) -- (0,0) -- (8.93,4.29) -- cycle    ;
\draw    (100,129.75) -- (100.98,265) ;
\draw [shift={(101,268)}, rotate = 269.59] [fill={rgb, 255:red, 0; green, 0; blue, 0 }  ][line width=0.08]  [draw opacity=0] (8.93,-4.29) -- (0,0) -- (8.93,4.29) -- cycle    ;
\draw    (120,129.75) -- (120.98,275) ;
\draw [shift={(121,278)}, rotate = 269.61] [fill={rgb, 255:red, 0; green, 0; blue, 0 }  ][line width=0.08]  [draw opacity=0] (8.93,-4.29) -- (0,0) -- (8.93,4.29) -- cycle    ;
\draw    (140,129.75) -- (140.97,225) ;
\draw [shift={(141,228)}, rotate = 269.42] [fill={rgb, 255:red, 0; green, 0; blue, 0 }  ][line width=0.08]  [draw opacity=0] (8.93,-4.29) -- (0,0) -- (8.93,4.29) -- cycle    ;
\draw    (161,128) -- (161,215) ;
\draw [shift={(161,218)}, rotate = 270] [fill={rgb, 255:red, 0; green, 0; blue, 0 }  ][line width=0.08]  [draw opacity=0] (8.93,-4.29) -- (0,0) -- (8.93,4.29) -- cycle    ;
\draw    (180,128) -- (180.97,235) ;
\draw [shift={(181,238)}, rotate = 269.48] [fill={rgb, 255:red, 0; green, 0; blue, 0 }  ][line width=0.08]  [draw opacity=0] (8.93,-4.29) -- (0,0) -- (8.93,4.29) -- cycle    ;
\draw    (201,128) -- (201,275) ;
\draw [shift={(201,278)}, rotate = 270] [fill={rgb, 255:red, 0; green, 0; blue, 0 }  ][line width=0.08]  [draw opacity=0] (8.93,-4.29) -- (0,0) -- (8.93,4.29) -- cycle    ;

\draw (330,299) node [anchor=north west][inner sep=0.75pt]   [align=left] {$\displaystyle \nu \in \mathbb{P}(\mathbb{R}^d)$};
\draw (330.5,17) node [anchor=north west][inner sep=0.75pt]   [align=left] {$\displaystyle \mu\in \mathbb{P}(\mathcal{M}) $};
\draw (80,299) node [anchor=north west][inner sep=0.75pt]   [align=left] {$\displaystyle T_\#\mu \in \mathbb{P}(\mathbb{R}^d)$};
\draw (85.5,17) node [anchor=north west][inner sep=0.75pt]   [align=left] {$\displaystyle \mu \in \mathbb{P}(\mathcal{M})$};
\draw (99,323) node [anchor=north west][inner sep=0.75pt]   [align=left] {GPE};
\draw (340,323) node [anchor=north west][inner sep=0.75pt]   [align=left] {VAE};

\end{tikzpicture}
\caption{
Illustration showing how GPE (left) and VAE (right) encoders work. The GPE encoder embeds the data distribution into the latent space while maintaining the geometric structure of the data distribution. In contrast, the VAE encoder maps the data distribution onto the latent distribution (or a prior distribution) $\nu$ such that $T_\#\mu \approx \nu$. Thus, there exist mappings that send distinct inputs to nearby points, resulting in $\alpha$ close to $0$.}
\label{fig:illus-vae-gpe}
\end{figure}

In what follows, we provide theoretical properties of the GPE framework for the reconstruction task from an optimization perspective.

\section{Theoretical properties of GPE}\label{sec:theory-gpe}
In this section, we provide theoretical properties of the GME costs and focus on the efficiency of training the GPE encoder and decoder, as well as the convergence analysis of using gradient descent algorithms for this purpose. Throughout our analysis, we study the optimization landscape directly in the function space (evaluating the continuous maps $T$ and $S$) rather than the parameter space (e.g., the neural network weights $\theta$). We take this approach because it isolates the intrinsic geometric properties of the GME objective from the arbitrary architectural choices of a specific neural network. Because parameter-space dynamics are fundamentally governed by their function-space counterparts, establishing stability and convergence in the function space provides the foundation needed to justify the robust optimization observed in practice.

To make this connection explicit for an arbitrary cost function $F$, consider a map parameterized by a neural network, $T_\theta$. In practice, gradient descent updates the parameters via $\theta \leftarrow \theta - \eta \nabla_\theta F(T_\theta)$. By the chain rule, this parameter-space gradient is fundamentally driven by the function-space gradient
\[
    \nabla_\theta F(T_\theta) = J_\theta^* \nabla_{L^2(\mu)} F(T_\theta),
\]
where $J_\theta^*$ is the adjoint of the Jacobian of the network with respect to $\theta$ and $\nabla_{L^2(\mu)}$ is a graident in $L^2(\mu)$ norm discussed in \Cref{def:gradient}. Because the network's learning dynamics are directly proportional to $\nabla_{L^2(\mu)} F(T_\theta)$, analyzing the smoothness and stability of the cost in the $L^2(\mu)$ space naturally dictates the stability of the empirical training process, provided the chosen neural network architecture has well-behaved gradients. 

To achieve this, we provide theoretical results that include the local isometry property of the GPE encoder (\Cref{ssec:local_isometry}), alongside a Hessian analysis of the minimization problems for both the GPE encoder in \eqref{eq:gme-cost} (\Cref{sec:stability-encoder}) and the decoder given the GPE encoder (\Cref{sec:effi-decoder}).

The first theoretical property of the GME cost is that it does not admit a collapsed map as a local minimizer. A collapsed map is one that sends every point of $\mathcal{M}$ to the same vector $c \in \mathbb{R}^d$. Although such a map trivially has zero gradient (since constant shifts do not affect pairwise distances), one can construct a non-constant perturbation along which the second variation of the GME cost is strictly negative. Hence, no collapsed map can be a local minimizer.

\begin{proposition}
  The constant map $T(x)=c$ is not a local minimizer of the GME cost \cref{eq:gme-cost}
  for any non‐degenerate data distribution $\mu$ on~$\M$ such that $\mu^2\bigl[(x,x'): \ x\neq x'\bigr]>0$.
\end{proposition}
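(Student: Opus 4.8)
The plan is to show that the constant map $T_0 \equiv c$ is not a local minimizer by exhibiting an explicit one-parameter family of smooth perturbations $T_t = c + t\,V$ for a well-chosen vector field $V:\mathcal M\to\mathbb R^d$, and computing that the second variation $\frac{d^2}{dt^2}\big|_{t=0}\GME(T_t)$ is strictly negative. Since $\GME(T_0) = \mathbb E_{(x,x')}\big[\log(1+\|x-x'\|^2)\big]^2 > 0$ under the non-degeneracy hypothesis, and the first variation vanishes (any perturbation $V$ produces $\|T_t(x)-T_t(x')\|^2 = t^2\|V(x)-V(x')\|^2 = O(t^2)$, so there is no $O(t)$ term at all), a strictly negative second-order term immediately yields points arbitrarily close to $T_0$ with strictly smaller cost.

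The key computation is as follows. Write $a(x,x') := \log(1+\|x-x'\|^2) \ge 0$ and, for the perturbed map, set $\delta(x,x') := \|V(x)-V(x')\|^2 \ge 0$, so that $\|T_t(x)-T_t(x')\|^2 = t^2\delta(x,x')$. Then
\begin{align*}
\GME(T_t) &= \mathbb E_{(x,x')\sim\mu^2}\Big[\big(\log(1+t^2\delta(x,x')) - a(x,x')\big)^2\Big].
\end{align*}
Expanding $\log(1+t^2\delta) = t^2\delta + O(t^4)$ and squaring, the integrand equals $a^2 - 2t^2\,a\,\delta + O(t^4)$. Hence
\begin{align*}
\GME(T_t) &= \GME(T_0) - 2t^2\,\mathbb E_{(x,x')\sim\mu^2}\big[a(x,x')\,\delta(x,x')\big] + O(t^4).
\end{align*}
It therefore suffices to choose $V$ so that $\mathbb E_{(x,x')}\big[a(x,x')\,\|V(x)-V(x')\|^2\big] > 0$. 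Since $a(x,x') > 0$ precisely on the set where $x\neq x'$, which has positive $\mu^2$-measure by hypothesis, any smooth $V$ that is non-constant on $\mathcal M$ — for instance a single nonconstant coordinate function of $\mathbb R^D$ restricted to $\mathcal M$, placed in the first coordinate of $\mathbb R^d$ — makes $\|V(x)-V(x')\|^2 > 0$ on a set of positive measure, so the expectation is strictly positive. This makes the $t^2$ coefficient strictly negative, so $\GME(T_t) < \GME(T_0)$ for all sufficiently small $t\neq 0$, proving $T_0$ is not a local minimizer.

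The only genuinely delicate point is justifying the Taylor expansion uniformly enough to interchange the expansion with the expectation: because $\mathcal M$ is compact and $V$ is smooth, $\delta(x,x')$ is bounded on $\mathcal M^2$, so $\log(1+t^2\delta)$ and its $t$-derivatives are uniformly controlled and dominated convergence applies — this is routine. One should also note that the argument shows something slightly stronger: the perturbation direction can be taken in the "excess" coordinates when $d > m$, or along any nonconstant tangential/ambient direction when $d = m$, so the conclusion is robust to the choice of latent dimension. I would present the choice of $V$ explicitly at the start, then do the three-line expansion, then invoke the non-degeneracy hypothesis to close.
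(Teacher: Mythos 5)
Your proof is correct and follows essentially the same argument as the paper: perturb the constant map along $T_t = c + tV$, Taylor-expand the logarithm, observe that the first-order term vanishes and the coefficient of $t^2$ is $-2\,\mathbb{E}\bigl[\log(1+\|x-x'\|^2)\,\|V(x)-V(x')\|^2\bigr] < 0$ for any non-constant $V$, and conclude. If anything you are slightly more thorough than the paper, since you explicitly exhibit a candidate $V$ and note the dominated-convergence justification for exchanging the Taylor expansion with the expectation, both of which the paper leaves implicit.
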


\begin{proof}
  Let $T_0(x)\equiv c$.  For any perturbation ${h}:\M\to\R^d$, define
    $T_\varepsilon(x)
    := c + \varepsilon\,{h}(x).$
  The GME cost becomes
  \begin{align*}
    \GME(T_\varepsilon)
    \;=\;\E_{(x,x')\sim\mu^2}\Bigl[\log\!\bigl(\tfrac{1+ \varepsilon^2 \|{h}(x)-{h}(x')\|^2}{1+\|x-x'\|^2}\bigr)^2\Bigr]
  \end{align*}
  For small $\varepsilon$, 
  \[
    \log\bigl(1+\varepsilon^2\|{h}(x)-{h}(x')\|^2\bigr)
    = \varepsilon^2\,\|{h}(x)-{h}(x')\|^2 + O(\varepsilon^4).
  \]
  Hence
  \begin{align*}
    \GME(T_\varepsilon) 
    &= 
     \E_{(x,x')\sim\mu^2}\Big(-\log (1+\|x-x'\|^2) \;+\;\varepsilon^2\,\|{h}(x)-{h}(x')\|^2 + O(\varepsilon^4)\Big)^2\\
    &= \E_{(x,x')\sim\mu^2}(\log (1+\|x-x'\|^2))^2\\
      &\hspace{2cm} - 2\,\varepsilon^2\,\E_{(x,x')\sim\mu^2}\,\Big[\log (1+\|x-x'\|^2)\;\|{h}(x)-{h}(x')\|^2\Big]
      + O(\varepsilon^4).
  \end{align*}
  Since $\log(1+\|x-x'\|^2)>0$ on a set of positive $\mu^2$-measure, any non-trivial perturbation ${h}$ (i.e.\ ${h}(x)\neq{h}(x')$ on a set of positive $\mu^2$-measure) satisfies
\[
  \E\bigl[\log(1+\|x-x'\|^2)\,\|{h}(x)-{h}(x')\|^2\bigr] > 0.
\]
Therefore, the coefficient of $\varepsilon^2$ is strictly negative, which implies that the second variation in the direction of ${h}$ is strictly negative. Hence $T_0$ cannot be a local minimizer.
\end{proof}
Throughout the theoretical results presented in \Cref{sec:theory-gpe} and \Cref{sec:gpe-lgm}, we assume a minimal regularity condition on the GPE encoders $T$.

\medskip
\paragraph*{Minimal Regularity Assumption on GPE Encoders:}
\begin{equation}\label{eq:reg-a-t}
    \mathcal{T}_\beta^k := 
    \left\{
        T \in C^k(\mathcal{M}, \mathbb{R}^d),\; (k\ge 3) \;\middle|\; 
        \|T\|_{C^2} \leq \beta,\quad T(0)= 0
    \right\}.
\end{equation}
The assumption that $T \in \mathcal{T}_\beta^k$ is a mild regularity requirement. Neural networks with smooth activations (e.g., SiLU, GELU) naturally possess bounded first and second derivatives on a compact manifold $\mathcal{M}$, provided their weights remain bounded. 

To guarantee this boundedness throughout the training loop (e.g., during the parameter updates in Algorithm 1), one can explicitly restrict the network parameters. For instance, enforcing a uniform bound on the weights such that each parameter $\theta \in [-a, a]$ for some constant $a > 0$ restricts the parameter space to a compact set. Because a neural network is fundamentally composed of sequential linear matrix multiplications and smooth activations, strictly bounding the individual weights naturally imposes a strict upper bound on the network's Lipschitz constant and its higher-order derivatives. Along with other standard deep learning techniques such as weight decay ($L_2$ regularization) or spectral normalization, this artificially bounds the parameter space \cite{Miyato2018Spectral, Gouk2021Regularisation, Fazlyab2019Efficient}. 

However, in practice, we observe that the optimization of the GME objective is stable without requiring these restrictive interventions. Because the GME loss controls the geometric distortion and enjoys the tight logarithmic Hessian bounds discussed previously, the network parameters do not explode. Consequently, training proceeds in a very simple, unconstrained manner while implicitly remaining within a bounded regime. Because the theoretical bound $\beta$ can be arbitrarily large, this provides the minimal regularity necessary to ensure stability while maintaining the practical simplicity of the model.

\bigskip

Throughout the paper, we assume that $\beta$ is sufficiently large so that
\begin{equation}\label{eq:assmpt-beta-diam}
    1+\diam(\mathcal{M})^2 < \beta^2.
\end{equation}
This condition is easily satisfied under the neural network parameterization, as the bound on $\beta$ simply accommodates the naturally occurring bounds of the network weights during stable training. This simplification allows us to streamline the notation in the results that follow.

An immediate consequence of our regularity assumptions is that the constraint set $\mathcal{T}_\beta^k$ forms a compact space. This geometric property, coupled with the continuity of the GME objective, formally guarantees that the minimization problem in \eqref{eq:gme-min} is well-posed, bounded, and admits a global minimizer within the set. The proof is provided in the appendix.

\begin{proposition}[Compactness and Existence of a Minimizer]\label{prop:compact-minimizer}
Let $\mu \in \mathbb{P}(\mathcal{M})$ and fix $\beta > 0$. The set ${\mathcal{T}}_\beta^k$ is a compact subset of $C^1(\mathcal{M}, \mathbb{R}^d)$. Furthermore, the GME cost defined in \eqref{eq:gme-cost} satisfies the uniform bound $0 \leq \GME(T) \leq 4 (\log(\beta))^2$ for all $T \in \mathcal{T}_\beta^k$,
and attains its minimum on this set; that is, there exists a minimizer $ T^* \in \mathcal{T}_\beta^k $ such that $\GME(T^*) = \min_{T \in \mathcal{T}_\beta^k} \GME(T)$.
\end{proposition}


\subsection{Local isometry of a GPE encoder}\label{ssec:local_isometry}

A fundamental objective in manifold representation learning is to construct an encoder that preserves the intrinsic geometric structure of the data. While minimizing the Global Mapping Error (GME) cost explicitly penalizes the distortion of pairwise distances across the manifold, it is not immediately obvious that a small global integral error guarantees metric preservation at the infinitesimal level. Ideally, an optimal encoder should act as a local isometry, meaning its differential mapping exactly preserves the lengths of tangent vectors, neither stretching nor compressing the manifold locally.

In this subsection, we establish a theoretical link showing that minimizing the global GME cost implicitly forces the encoder to become a local isometry. Theorem \ref{thm:local-distortion-bound} quantifies this relationship by bounding the average local directional distortion, a measure of how much the lengths of unit tangent vectors deviate from 1 under the pushforward of the encoder. 

The significance of this theorem is twofold. First, it provides a strong theoretical justification for using GME as a training objective: it guarantees that as the global training loss approaches zero, the learned mapping inherently converges to a strict local isometry. Second, by explicitly incorporating the manifold's dimension $m$ and the encoder's smoothness order $k$, the theorem reveals how higher-order geometric regularity allows us to translate coarse, global pairwise bounds into tight, local metric bounds.

\begin{theorem}[Local isometry of GPE encoder]\label{thm:local-distortion-bound}
Let $ \mathcal{M} \subset \mathbb{R}^D $ be a compact, smooth $ m $-dimensional manifold and $ \mu  \in \mathbb{P}(\mathcal{M})$. Suppose $T \in \mathcal{T}_{\beta}^k$. Fix any integer $k \ge 3$. Let $A(x) = \int_{U_x\M} (\|dT_x(v)\|^2 - 1)^2 d\sigma(v)$ be the local directional distortion at $x$, where $dT_x: \mathcal{T}_x\mathcal{M} \to \mathbb{R}^d$ is the differential of $T$ and $U_x\mathcal{M} = \{v \in \mathcal{T}_x\mathcal{M} : \|v\|_{g_x} = 1\}$ denotes the unit tangent sphere at $x$. Then, there exists a constant $C > 0$, depending only on $m$, $k$, and the local geometry of $\mathcal{M}$, such that
\[
\int_{\mathcal{M}} A(x) d\mu(x) \leq C \big(\GME(T)\big)^{\frac{2k-4}{m+2k}}.
\]
\end{theorem}

\begin{proof}
Define the discrepancy function $f(x,x') = \|T(x)-T(x')\|^2 - \|x-x'\|^2$. Fix $x \in \mathcal{M}$ and a unit tangent vector $v \in U_x\mathcal{M}$. Define the 1D slice along the intrinsic geodesic as $u(t) = f(x, \exp_x(tv))$. Because $\exp_x(tv)$ has initial position $x$ and initial velocity $v$ (with $\|v\|_{g_x}=1$), the Taylor expansions of the squared ambient distances yield $u(0)=0$, $u'(0)=0$, and $u''(0) = 2B_{x,v}$, where $B_{x,v} = \|dT_x(v)\|^2 - 1$. 

Since $T$ is smooth and the exponential map is smooth, $u(t)$ admits a $k$-th order Taylor expansion around $t=0$
\[
u(t) = B_{x,v} t^2 + \sum_{j=3}^{k-1} a_j t^j + R_k(t)
\]
where the intermediate coefficients $a_j$ encode the extrinsic curvature of $\mathcal{M}$, and the remainder satisfies $|R_k(t)| \leq K t^k$ for a constant $K$ depending uniformly on $T$ (via the bounds in $\mathcal{T}_\beta^k$) and $\mathcal{M}$.

Let $\epsilon > 0$ be a globally uniform integration radius, strictly smaller than the injectivity radius of $\mathcal{M}$. We construct a weight function $W_\epsilon(t)$ on $[0,\epsilon]$ to isolate $B_{x,v}$ by annihilating the intermediate terms when integrated against the spherical volume element $t^{m-1}dt$. 

Treating $L^2([0,1])$ as a Hilbert space, consider the finite-dimensional subspace $V = \text{span}\{s^{m+2}, \dots, s^{m+k-2}\}$. Let $q(s) = s^{m+1} \notin V$ be the target function. We define a polynomial $P(s) = Q(s) / \|Q\|^2$, where $Q(s)$ is the orthogonal projection of $q(s)$ onto $V^\perp$. By construction, $\int_0^1 P(s) s^{j+m-1} ds = 0$ for $3 \leq j \leq k-1$, and $\int_0^1 P(s) s^{m+1} ds = 1$.

Scaling to $[0, \epsilon]$, we define $W_\epsilon(t) = \frac{1}{\epsilon^{m+2}} P\left(\frac{t}{\epsilon}\right)$. Multiplying $u(t)$ by $W_\epsilon(t) t^{m-1}$ and integrating from $0$ to $\epsilon$ eliminates the sum over $a_j$
\[
\int_0^\epsilon u(t) W_\epsilon(t) t^{m-1} dt = B_{x,v} + \int_0^\epsilon R_k(t) W_\epsilon(t) t^{m-1} dt
\]

Rearranging for $B_{x,v}$ and applying the Cauchy-Schwarz inequality to the first term yields
\[
|B_{x,v}| \leq \|u\|_{L^2(\epsilon)} \left( \int_0^\epsilon W_\epsilon(t)^2 t^{m-1} dt \right)^{1/2} + \int_0^\epsilon |R_k(t)| |W_\epsilon(t)| t^{m-1} dt
\]
where $\|u\|_{L^2(\epsilon)}^2 = \int_0^\epsilon u(t)^2 t^{m-1} dt$. By substitution, $\int_0^\epsilon W_\epsilon(t)^2 t^{m-1} dt = C_P^2 \epsilon^{-(m+4)}$. Using $|R_k(t)| \leq K t^k$, the remainder term is bounded by $C_K \epsilon^{k-2}$. Squaring the inequality results in
\[
B_{x,v}^2 \leq 2 C_P^2 \epsilon^{-(m+4)} \|u\|_{L^2(\epsilon)}^2 + 2 C_K^2 \epsilon^{2k-4}
\]

\vspace{1em}
Integrating $B_{x,v}^2$ over all unit tangent directions $v \in U_x\mathcal{M}$ yields the local directional distortion $A(x)$
\[
A(x) = \int_{U_x\mathcal{M}} B_{x,v}^2 d\sigma_x(v) \leq 2 C_P^2 \epsilon^{-(m+4)} \left( \int_{U_x\mathcal{M}} \|u\|_{L^2(\epsilon)}^2 d\sigma_x(v) \right) + 2 C_K^2 \epsilon^{2k-4}
\]

We relate the integral of $\|u\|_{L^2(\epsilon)}^2$ to the integral over the local geodesic ball $B_{\mathcal{M}}(x, \epsilon)$. Using normal coordinates $x' = \exp_x(tv)$, the intrinsic Riemannian volume element is $d\mu(x') = \theta(t,v) t^{m-1} dt d\sigma_x(v)$. Because $\mathcal{M}$ is compact and $\epsilon$ is small, the density $\theta(t,v)$ is bounded below by a geometric constant $c > 0$. Therefore
\[
\int_{B_{\mathcal{M}}(x, \epsilon)} f(x,x')^2 d\mu(x') \geq c \int_{U_x\mathcal{M}} \int_0^\epsilon f(x, \exp_x(tv))^2 t^{m-1} dt d\sigma_x(v)
\]
Recognizing the inner integral as $\|u\|_{L^2(\epsilon)}^2$, we obtain
\[
\int_{U_x\mathcal{M}} \|u\|_{L^2(\epsilon)}^2 d\sigma_x(v) \leq \frac{1}{c} \int_{B_{\mathcal{M}}(x, \epsilon)} f(x,x')^2 d\mu(x')
\]
Substituting this back establishes the pointwise bound for $x$
\[
A(x) \leq C_1 \epsilon^{-(m+4)} \int_{B_{\mathcal{M}}(x, \epsilon)} f(x,x')^2 d\mu(x') + C_2 \epsilon^{2k-4}
\]
where $C_1 = \frac{2 C_P^2}{c}$ and $C_2 = 2 C_K^2$.

\vspace{1em}
We integrate this pointwise bound over the entire manifold $\mathcal{M}$ with respect to the probability measure $d\mu(x)$
\[
\int_{\mathcal{M}} A(x) d\mu(x) \leq C_1 \epsilon^{-(m+4)} \int_{\mathcal{M}} \int_{B_{\mathcal{M}}(x, \epsilon)} f(x,x')^2 d\mu(x') d\mu(x) + C_2 \epsilon^{2k-4}
\]
Since $f(x,x')^2$ is non-negative, the integral over the local geodesic balls is trivially bounded by the integral over the entire domain $\mathcal{M} \times \mathcal{M}$, which is exactly the global mapping error
\[
\int_{\mathcal{M}} \int_{B_{\mathcal{M}}(x, \epsilon)} f(x,x')^2 d\mu(x') d\mu(x) \leq \int_{\mathcal{M}} \int_{\mathcal{M}} f(x,x')^2 d\mu(x') d\mu(x) = \GME(T)
\]
This yields a global inequality strictly in terms of the uniform radius $\epsilon$
\[
\int_{\mathcal{M}} A(x) d\mu(x) \leq C_1 \epsilon^{-(m+4)} \GME(T) + C_2 \epsilon^{2k-4}
\]
To yield the tightest bound, we balance the competing terms by setting $C_1 \epsilon^{-(m+4)} \GME(T) = C_2 \epsilon^{2k-4}$. This produces the optimal scaling radius
\[
\epsilon = \left( \frac{C_1}{C_2} \GME(T) \right)^{\frac{1}{m+2k}}
\]
Substituting this optimal $\epsilon$ back into the inequality completes the proof
\[
\int_{\mathcal{M}} A(x) d\mu(x) \leq C \cdot \GME(T)^{\frac{2k-4}{m+2k}},\quad \text{where $C = 2 C_1 \left( \frac{C_1}{C_2} \right)^{-\frac{m+4}{m+2k}}$.}
\]
\end{proof}
While Theorem \ref{thm:local-distortion-bound} establishes that the global mapping error tightly controls the average directional distortion across the manifold, probability density estimation requires a strictly stronger guarantee. If the encoder $T$ were allowed to have isolated, microscopic regions of extreme distortion (spikes), the pulled-back density $\rho_T$ could still exhibit severe local errors even if the integral of the distortion was small. 

To bridge the gap between an average $L^1$ bound and a pointwise $L^\infty$ bound, we leverage the higher-order smoothness of the encoder. Because the first and second derivatives of $T$ are bounded, the local distortion $A(x)$ is Lipschitz continuous. Geometrically, this prevents $A(x)$ from exhibiting arbitrarily narrow spikes; any large local distortion must be accompanied by a wide cone of distortion around it, which would violate the integral bound established in the previous theorem. This topological constraint allows us to convert our global integral bound directly into a uniform, pointwise guarantee on the Jacobian determinant everywhere on the manifold.

\begin{corollary}[Uniform Bounds on Jacobian of a GPE encoder]\label{cor:A-bound}
Let $\mathcal{M} \subset \mathbb{R}^D$ be a compact, smooth $m$-dimensional Riemannian manifold and $\mu \in \mathbb{P}(\mathcal{M})$. Suppose $T \in \mathcal{T}_{\beta}^k$, and let $J_T(x)$ denote the determinant of the Jacobian of $T$ at $x$. There exists a constant $K > 0$ depending on $m$, $\beta$, and the local geometry of $\mathcal{M}$, such that
\[
\exp\left( - K \big(\GME(T)\big)^{c_{m,k}} \right) \leq J_T(x) \leq \exp\left( K \big(\GME(T)\big)^{c_{m,k}} \right),\quad \forall x \in \mathcal{M}
\]
where the exponent is given by $c_{m,k} = \frac{k-2}{(m+2k)(m+1)}$.
\end{corollary}

\begin{proof}
The local directional distortion is defined as $A(x) = \int_{U_x\M} (v^\top \tilde{g}(x) v - 1)^2 d\sigma(v)$, where $\tilde{g}(x) = dT_x^\top dT_x$ is the induced metric. To establish that $A(x)$ is Lipschitz, we bound its gradient $\nabla A(x)$. By the chain rule
\[
\nabla A(x) = \int_{U_x\M} 2(v^\top \tilde{g}(x) v - 1) \cdot (v^\top \nabla \tilde{g}(x) v) d\sigma(v).
\]
Note that $\nabla \tilde{g}(x) = \nabla(dT^\top dT) = (\nabla dT)^\top dT + dT^\top (\nabla dT)$, which involves the Hessian $D^2 T$. Since $T \in \mathcal{T}_{\beta}^k$ is smooth on a compact manifold, its $C^2$ norm $\|T\|_{C^2} = \max(\|dT\|_\infty, \|D^2 T\|_\infty)$ is bounded by $\beta$. Thus, the gradient magnitude is bounded by $\|\nabla A(x)\| \leq 4\beta^2(\beta^2+1) := L_\beta$. This establishes that $A(x)$ is $L_\beta$-Lipschitz.

Let $\eta = \sup_{x \in \mathcal{M}} A(x)$, attained at $x_0$. By Lipschitz continuity, $A(x) \geq \eta - L_\beta \cdot d(x, x_0)$. In a geodesic ball $B_r(x_0)$ with radius $r = \eta / (2L_\beta)$, we have $A(x) \geq \eta/2$. Integrating over $\mathcal{M}$, we have
\[
\int_{\mathcal{M}} A(x) d\mu(x) \geq \int_{B_r(x_0)} \frac{\eta}{2} d\mu(x) \geq \frac{\eta}{2} \cdot c \left( \frac{\eta}{2L_\beta} \right)^m = C_1 \eta^{m+1}.
\]
Using the integral bound $\|A\|_{L^1} \leq C_0 \GME(T)^{\frac{2k-4}{m+2k}}$ established in Theorem \ref{thm:local-distortion-bound}, we solve for $\eta$
\begin{equation}\label{eq:eta-final}
\eta \leq C^* \big(\GME(T)\big)^{\frac{2k-4}{(m+2k)(m+1)}}.
\end{equation}

To evaluate $A(x)$, we consider the unit sphere in the tangent space $U_x\mathcal{M} = \{v \in \mathcal\mathcal{T}_x\mathcal{M} : g_x(v, v) = 1\}$. We use the exponential map $\exp_x: \mathcal\mathcal{T}_x\mathcal{M} \to \mathcal{M}$ to define Riemann normal coordinates centered at $x$. Under this map, the tangent space is isometrically identified with $\mathbb{R}^m$, and the Riemannian metric satisfies $g_{ij}(x) = \delta_{ij}$. Consequently, $U_x\mathcal{M}$ is identified with the Euclidean unit sphere $\mathbb{S}^{m-1} \subset \mathbb{R}^m$. Let $\tilde{g}(x) = dT_x^\top dT_x$ be the induced metric represented in these coordinates, and let $\{\lambda_i\}_{i=1}^m$ be its eigenvalues. For any unit vector $v = \sum_{i=1}^m a_i v_i \in \mathbb{S}^{m-1}$ (where $\{v_i\}$ is an orthonormal eigenbasis), the local directional distortion is
\[
A(x) = \int_{\mathbb{S}^{m-1}} \left( \sum_{i=1}^m \lambda_i a_i^2 - 1 \right)^2 d\sigma(v) = \int_{\mathbb{S}^{m-1}} \left( \sum_{i=1}^m (\lambda_i - 1) a_i^2 \right)^2 d\sigma(v).
\]
Let $x_i = \lambda_i - 1$. Expanding the integrand, we obtain
\[
\left( \sum_{i=1}^m x_i a_i^2 \right)^2 = \sum_{i=1}^m x_i^2 a_i^4 + \sum_{i \neq j} x_i x_j a_i^2 a_j^2.
\]
Applying the spherical moment identities \cite{folland2001integrate}, which state that $\int_{\mathbb{S}^{m-1}} a_i^4 d\sigma = \frac{3}{m(m+2)}$ and $\int_{\mathbb{S}^{m-1}} a_i^2 a_j^2 d\sigma = \frac{1}{m(m+2)}$ for $i \neq j$, the integral becomes
\begin{align*}
    A(x) &= \frac{3}{m(m+2)} \sum_{i=1}^m x_i^2 + \frac{1}{m(m+2)} \sum_{i \neq j} x_i x_j \\
    &= \frac{1}{m(m+2)} \left[ 2 \sum_{i=1}^m x_i^2 + \left( \sum_{i=1}^m x_i^2 + \sum_{i \neq j} x_i x_j \right) \right] \\
    &= \frac{1}{m(m+2)} \left[ 2 \sum_{i=1}^m (\lambda_i - 1)^2 + \left( \sum_{i=1}^m (\lambda_i - 1) \right)^2 \right] \geq\frac{2}{m(m+2)} \sum_{i=1}^m (\lambda_i - 1)^2.
\end{align*}
This implies that for every $i \in \{1, \dots, m\}$, the eigenvalue deviation is controlled by the local distortion: $|\lambda_i - 1| \leq \sqrt{\frac{m(m+2)}{2}} \sqrt{A(x)}$.

The Jacobian determinant is $J_T(x) = (\prod \lambda_i)^{1/2}$. Taking the logarithm yields $|\ln J_T(x)| = \left| \frac{1}{2} \sum_{i=1}^m \ln(1 + (\lambda_i - 1)) \right|$. Assuming the GME is sufficiently small such that the deviations satisfy $|\lambda_i - 1| \leq 0.5$, we apply the Taylor bound $|\ln(1+z)| \leq 2|z|$
\[
|\ln J_T(x)| \leq \frac{1}{2} \sum_{i=1}^m 2|\lambda_i - 1| \leq m \sqrt{\frac{m(m+2)}{2}} \sqrt{\eta}.
\]
Substituting the expression for $\eta$ from \eqref{eq:eta-final} and defining $c_{m,k} = \frac{k-2}{(m+2k)(m+1)}$, we obtain $|\ln J_T(x)| \leq K \GME(T)^{c_{m,k}}$, where $K = m \sqrt{\frac{m(m+2)}{2}} \sqrt{C^*}$. Exponentiating yields the desired uniform bounds
\[
\exp\left( - K \GME(T)^{c_{m,k}} \right) \leq J_T(x) \leq \exp\left( K \GME(T)^{c_{m,k}} \right).
\]
This completes the proof.
\end{proof}

\subsection{Stability of GPE encoder training}\label{sec:stability-encoder}
In this section, we analyze the optimization landscape of the GME cost to establish how minimizing this objective improves the stability of the learned representation. Crucially, throughout this section and Section 5.3, we evaluate the first and second variations of our objectives with respect to the uniform Riemannian volume measures on the manifolds (i.e., the norms $\|\cdot\|_{L^2(\mathcal{M})}$ and $\|\cdot\|_{L^2(T(\mathcal{M}))}$) rather than the data-dependent distribution spaces $L^2(\mu)$ and $L^2(T_\#\mu)$.

This choice allows us to consistently isolate the intrinsic geometry of the mappings from the specific density of the data. In practice, models optimized strictly under data-dependent measures frequently suffer from geometric pathologies when the data distribution exhibits highly irregular densities or sparse regions between disconnected modes. For example, in normalizing flows and invertible neural networks, low-density regions often receive vanishing gradient signals, permitting the network to learn mappings with arbitrarily large local Lipschitz constants that severely destabilize inversion and generation \cite{behrmann2021understanding, cornish2020relaxing}. When evaluating the variations of a general functional $F$ under a data-dependent measure, the local concentration of data heavily obscures these severe geometric distortions. As we will demonstrate in Section 5.3, evaluating the decoder's reconstruction loss under the pushforward measure $L^2(T_\#\mu)$ trivially yields an identity-like Hessian, completely hiding the optimization instabilities caused by latent spatial distortion. 

To formalize our geometric framework, we first review the notions of first and second variations.

\begin{definition}\label{def:gradient}
    Given a Hilbert space $\mathcal{H}$ equipped with an inner product $\langle\cdot,\cdot\rangle_{\mathcal H}$, and a function $f:\mathcal{H}\to\mathbb{R}$, the first variation of $f$ at $u\in\mathcal{H}$ in the direction of $h\in\mathcal{H}$ is
    \begin{equation}\label{eq:def-first}
        \delta f(u)(h) = \lim_{t\rightarrow 0} \frac{f(u+th) - f(u)}{t}.
    \end{equation}
    If there exists an element $ \nabla f(u) \in \mathcal{H} $ such that
    \[
        \delta f(u)(h) = \langle \nabla f(u), h \rangle_{\mathcal{H}} \quad \text{for all } h \in \mathcal{H},
    \]
    then $ \nabla f(u) $ is called the $\mathcal{H}$-gradient of $ f $ at $ u $.

    The second variation of $f$ at $u$ in the direction of $h$ is defined by
    \begin{equation}\label{eq:def-second}
        \delta^2 f(u)(h,h) = \lim_{t\rightarrow 0} \frac{\delta f(u+th)(h) - \delta f(u)(h)}{t}.
    \end{equation}    
\end{definition}

With these definitions established, analyzing the variations under the Riemannian measure provides clear insights into the parameter-space optimization dynamics. By \Cref{def:gradient}, the functional gradients in the two spaces are fundamentally linked by this density: $\nabla_{L^2(\mathcal{M})} F(x) = \rho(x) \nabla_{L^2(\mu)} F(x)$. Applying the chain rule, the expected parameter-space gradient over the data distribution explicitly reveals how the intrinsic geometry drives learning
\begin{align}\label{eq:first-var-chain}
    \nabla_\theta F(T_\theta) &= \mathbb{E}_{x \sim \mu} \left[ J_\theta(x)^* \nabla_{L^2(\mu)} F(T_\theta)(x) \right]
    = \int_{\mathcal{M}} J_\theta(x)^* \nabla_{L^2(\mathcal{M})} F(T_\theta)(x) \, d\vol_{\mathcal{M}}(x),
\end{align}
where $J_\theta(x)^*$ is the adjoint Jacobian of the neural network with respect to its parameters $\theta$. 

While the first variation governs the direction of the gradient update, the ultimate efficiency and stability of optimization are dictated by the second variation (the functional Hessian), which characterizes the local curvature of the loss landscape. In gradient-based learning, ill-conditioned landscapes, where the Hessian exhibits a severely large condition number (the ratio of maximum to minimum eigenvalues), cause optimization to stall. The following proposition formalizes how bounding the functional Hessian strictly dictates the parameter-space optimization efficiency, governing both the maximum allowable learning rate and the convergence speed.
\begin{proposition}[Parameter-Space Curvature and Optimization]\label{prop:hessian-chain}
Let $\mathcal{M}$ be a manifold, $\mu$ be a data distribution supported on $\mathcal{M}$, and $u_\theta: \mathcal{M} \to \mathbb{R}^d$ be a parameterized mapping that is twice-differentiable with respect to both $x \in \mathcal{M}$ and $\theta \in \mathbb{R}^p$. For a functional $F$, let $H_\theta = \nabla_\theta^2 F(u_\theta)$ denote the parameter-space Hessian. Assume that near a local minimum, the functional gradient vanishes ($\nabla_{L^2(\mathcal{M})} F(u_\theta) \approx 0$), and the functional Hessian is bounded such that for all $h \in L^2(\mathcal{M}; \mathbb{R}^d)$
\begin{equation*}
    \lambda_{\min} \|h\|^2_{L^2(\mathcal{M})} \leq \delta^2 F(u_\theta)(h, h) \leq \lambda_{\max} \|h\|^2_{L^2(\mathcal{M})}, \quad \text{with } \lambda_{\max} \ge \lambda_{\min} > 0.
\end{equation*}
Furthermore, assume the mapping's Jacobian operator with respect to the parameters, $J_\theta(x) = \nabla_\theta u_\theta(x)$, maps any parameter variation $v \in \mathbb{R}^p$ to a square-integrable function $J_\theta v \in L^2(\mathcal{M}; \mathbb{R}^d)$, with its spectrum bounded by $\sigma_{\max} \ge \sigma_{\min} > 0$ such that $\sigma_{\min}^2 \|v\|^2 \leq \|J_\theta v\|_{L^2(\mathcal{M})}^2 \leq \sigma_{\max}^2 \|v\|^2$. Then, the parameter-space Hessian satisfies
\begin{equation*}
    \lambda_{\min} \sigma_{\min}^2 I \preceq H_\theta \preceq \lambda_{\max} \sigma_{\max}^2 I.
\end{equation*}
Consequently, for stable gradient descent, the maximum step size is bounded by $\eta < \frac{2}{\lambda_{\max} \sigma_{\max}^2}$, and the local linear convergence rate is governed by the condition number $\kappa \leq \frac{\lambda_{\max} \sigma_{\max}^2}{\lambda_{\min} \sigma_{\min}^2}$.
\end{proposition}

\Cref{prop:hessian-chain} provides a bridge between functional geometry and practical training. When the network has enough capacity ($\sigma_{\min} > 0$), a positive functional Hessian prevents the optimizer from getting stuck in flat regions. This is a formal consequence of the functional Polyak-Łojasiewicz (P-L) condition \cite{karimi2016linear}, because the Hessian is bounded away from zero, the gradient stays large enough to ensure rapid, linear convergence. Even in overparameterized cases where the network becomes redundant ($\sigma_{\min} = 0$), our analysis still offers a practical benefit. The upper bound ($\lambda_{\max}$) ensures the landscape stays smooth and free of sharp, sudden drops. This allows practitioners to use larger step sizes safely, ensuring the training remains stable even in locally flat regions of the parameter space.

Although this proposition analyzes the strictly positive bounds characteristic of locally convex basins, the actual GME cost function is globally non-convex. Building on this foundation, the following theorem characterizes the regularity of this non-convex GME optimization landscape for the encoder. Crucially, our results demonstrate that as the optimization process progresses and the GME cost decreases, the negative eigenvalues of the functional Hessian are strictly bounded and asymptotically vanish. This indicates that as the encoder minimizes the objective and approaches a local isometry, the landscape dynamically suppresses its non-convexity and enters the highly favorable, well-conditioned regime described above, theoretically guaranteeing higher stability and accelerated convergence.
\begin{theorem}\label{thm:log-GME-bound}
    Let $\mathcal M$ be a compact Riemannian manifold. For any $\beta > 0$ satisfying the bound in \eqref{eq:assmpt-beta-diam}, suppose $T \in C^1(\mathcal{M},\mathbb{R}^d)$ is $\beta$-Lipschitz and $\GME(T) = \varepsilon$ for some $\varepsilon \ge 0$. Then, for any perturbation $h \in C^1(\mathcal{M}, \mathbb{R}^d)$, the second variation of the GME cost satisfies
    \[
        -16 \sqrt{\varepsilon \rho_{\max}}\,\|h\|_{L^4(\mathcal M)}^2 
        \le \delta^2 \GME(T)(h,h) 
        \le 8\rho_{\max}(4\log(\beta)+1)\,\|h\|_{L^2(\mathcal M)}^2.
    \]
\end{theorem}

\begin{proof}
    For simplicity throughout the proof, we use the notation $f_{x,x'} := f(x) - f(x')$ for any function $f:\mathcal{M}\rightarrow {\mathbb{R}^d}$.
    From Proposition~\ref{prop:second-var-gme} in the appendix, the second variation of the GME cost for $T, h \in C^1(\mathcal{M}, \mathbb{R}^d)$ implies
    \begin{align}
        \frac{1}{4} \delta^2 \GME(T)(h,h) 
    &= \int_{{\mathcal{M}^2}}  \log\left( \frac{\|T_{x,x'}\|^2 + 1}{\|x-x'\|^2 + 1}\right)\left( \frac{ \| h_{x,x'} \|^2 }{\|T_{x,x'}\|^2 + 1} - 2 H^2 \right)  
    + 2 H^2 d\mu d\mu \label{eq:gm_upperbound3} \\
    &\geq - \left(\GME(T) \right)^{1/2}\left(\int_{\mathcal{M}^2}\left( \frac{ \| h_{x,x'} \|^2 }{\|T_{x,x'}\|^2 + 1} - 2 H^2 \right)^2d\mu d\mu\right)^{1/2} \label{eq:gm_upperbound2}
    \end{align}
    where we used Hölder's inequality in the last line and $H = \frac{\langle T_{x,x'}, h_{x,x'} \rangle}{\|T_{x,x'}\|^2 + 1}$. 

    Using the algebraic bounds $0 \leq \|h_{x,x'}\|^2 / (\|T_{x,x'}\|^2+1) \leq \|h_{x,x'}\|^2$ and the Cauchy-Schwarz inequality, it follows that
    \begin{align}\label{eq:thm-h-t-ineq}
        \left| \frac{ \| h_{x,x'} \|^2 }{\|T_{x,x'}\|^2 + 1} - 2 \left(\frac{\langle T_{x,x'}, h_{x,x'} \rangle}{\|T_{x,x'}\|^2 + 1}\right)^2 \right| \leq \|h_{x,x'}\|^2.
    \end{align}
    
    To establish the lower bound, we use the property that $\mu$ is a probability measure on a compact manifold, meaning the $L^p$ norms of $h$ are bounded by its $C^1$ norm. We can bound $\int_{\mathcal{M}^2}\|h_{x,x'}\|^4 d\mu d\mu$ by
    \[
        \int_{\mathcal{M}^2}\|h_{x,x'}\|^4 d\mu d\mu
        \leq \int_{\mathcal{M}^2} 8(\|h(x)\|^4 + \|h(x')\|^4) d\mu d\mu
        \leq 16 \rho_{\max}\|h\|_{L^4(\mathcal{M})}^4.
    \]
    Substituting this into \eqref{eq:gm_upperbound2} and the premise that $\GME(T) = \varepsilon$, we bound the right-hand side below by $- 4  \sqrt{\varepsilon \rho_{\max}} \|h\|^2_{L^4(\mathcal M)}$, which after multiplying by $4$ yields the lower bound.
    
    Now, we establish the upper bound of the Hessian in the $L^2$ topology. Since $T$ is $\beta$-Lipschitz, the absolute bound for the logarithmic term is
    \begin{align}
        \left|\log\left(\frac{\|T_{x,x'}\|^2+1}{\|x-x'\|^2+1}\right)\right|
        \leq 2 \log(\beta)
        \label{eq:log-up-bound}
    \end{align}
    for all $(x,x')\in \mathcal{M}^2$.
    Returning to \eqref{eq:gm_upperbound3}, applying our bounds and using $H^2\leq \tfrac{\|T_{x,x'}\|^2\|h_{x,x'}\|^2}{(\|T_{x,x'}\|^2+1)^2} \leq \tfrac{1}{2}\|h_{x,x'}\|^2$, we restrict the upper bound purely to the $L^2$ topology
    \begin{align*}
        \frac{1}{4} \delta^2 \GME(T)(h,h) 
        &\leq 2\log(\beta) \int_{\mathcal{M}^2} \|h_{x,x'}\|^2 d\mu d\mu + \frac{1}{2} \int_{\mathcal{M}^2} \|h_{x,x'}\|^2 d\mu d\mu \\
        &= \left(2\log(\beta) + \frac{1}{2}\right) \int_{\mathcal{M}^2} \|h_{x,x'}\|^2 d\mu d\mu.
    \end{align*}
    Since $\int_{\mathcal{M}^2} \|h_{x,x'}\|^2 d\mu d\mu \leq 4 \|h\|^2_{L^2(\mu)}\leq 4 \rho_{\max} \|h\|_{L^2(\mathcal{M})}^2$, this implies
    \[
        \frac{1}{4} \delta^2 \GME(T)(h,h) \leq 2 \rho_{\max} (4\log(\beta) + 1) \|h\|_{L^2(\mathcal{M})}^2.
    \]
    Multiplying by $4$ yields the final $L^2$ upper bound, completely validating the $L^2$ gradient descent convergence step.
\end{proof}
\Cref{thm:log-GME-bound} establishes that the logarithmic formulation of the GME cost tightly controls the Hessian's growth, limiting it to $\mathcal{O}(\log \beta)$, where $\beta$ represents the geometric distortion arising from the neural network's functional structure, which can be large. In contrast, standard distance-preserving objectives (such as the quadratic MDS cost) exhibit polynomial growth, $\mathcal{O}(\beta^2)$. Because the maximum stable step size is inversely proportional to this bound (\Cref{prop:hessian-chain}), standard models with large initial distortions are forced to use vanishingly small step sizes ($\propto 1/\beta^2$) and frequently stall. GME resolves this: its required step size decays extremely slowly ($\propto 1/\log \beta$), permitting aggressive learning rates that reliably navigate highly distorted regions without stalling, an advantage validated empirically in \Cref{subsec:isometry_experiments}.

The following corollary (proof in \Cref{appendix:proof-gme-gd}) translates this geometric bound into a concrete optimization guarantee, demonstrating that the adaptive logarithmic step size ensures strict monotonic descent.

\begin{corollary}[Logarithmic Step-Size Stability]\label{cor:gme-cost-gd-conv}
    Let $T^{(0)} \in C^2(\mathcal{M}, \mathbb{R}^d)$ be an initial mapping, and consider the gradient descent updates in $L^2(\mu)$:
    \[
        T^{(k+1)} = T^{(k)} - \sigma_k \nabla_{L^2(\mu)} \GME(T^{(k)}), \quad k \geq 0.
    \]
    Assume each iterate $T^{(k)}$ maintains finite regularity $\beta_k = \max(1, \|T^{(k)}\|_{C^2}) < \infty$. By setting the adaptive step size $\sigma_k = \frac{1}{8(4\log(\beta_k) + 1)}$, the objective value strictly monotonically decreases. Furthermore, the optimization trajectory satisfies the finite-energy condition:
    \[
        \sum_{k=0}^{\infty} \frac{\|\nabla_{L^2(\mu)} \GME(T^{(k)})\|_{L^2(\mathcal{M})}^2}{8(4\log(\beta_k) + 1)} \leq 2 \left( \GME(T^{(0)}) - \inf_{T} \GME(T) \right).
    \]
\end{corollary}

This finite-energy condition proves that the function-space gradients must asymptotically vanish. Because the logarithmic denominator prevents the learning rate from artificially collapsing, the optimization process guarantees convergence without prematurely stalling in suboptimal states.


\subsection{Efficiency of decoder training given a GPE encoder}\label{sec:effi-decoder}

In this section, we show that the decoder training becomes more efficient and can be accelerated given the encoder with the small GME loss from \cref{eq:gme-cost}. Consider the following reconstruction cost function given an encoder $T$
\begin{equation}\label{eq:L-recons}
    L_{\rm{rec}}(S) := \mathbb{E}_{x \sim \mu}  [\|S \circ T(x)- x\|^2]
\end{equation}
Using the local isometry result from \Cref{ssec:local_isometry}, we can derive lower and upper bounds on the Hessian of the reconstruction loss, given an encoder $ T $ that minimizes the GME cost.

\begin{theorem}\label{thm:S-2}
Let $\mathcal{M} \subset \mathbb{R}^D$ be a compact, smooth $m$-dimensional Riemannian manifold. Let $\mu \in \mathbb{P}(\mathcal{M})$ be a data distribution with density $\rho$ bounded strictly by $0 < \rho_{\min} \leq \rho(x) \leq \rho_{\max}$ relative to the Riemannian volume measure $\mathrm{vol}_{\mathcal{M}}$. Suppose $T \in \mathcal{T}_\beta^k$ is a smooth embedding of $\mathcal{M}$ into $\mathbb{R}^d$, and let $\varepsilon_{\mathrm{GME}} := \GME(T)$. Consider the reconstruction cost $L_{\mathrm{rec}}(S)$ defined in \eqref{eq:L-recons}. Then, the second variation of the cost function satisfies, for any perturbation $h$,
\begin{equation}\label{eq:hessian-bounds}
\begin{aligned}
    2 \rho_{\min} \exp\left(- K \varepsilon_{\mathrm{GME}}^{c_{m,k}} \right) 
    &\leq \frac{\delta^2 L_{\mathrm{rec}}(S)(h, h)}{\|h\|^2_{L^2(T(\M))}} 
    \leq 2 \rho_{\max} \exp\left(K \varepsilon_{\mathrm{GME}}^{c_{m,k}} \right) ,
\end{aligned}
\end{equation}
where $K > 0$ is a constant depending on $m$, the local geometry of $\mathcal{M}$, and the $C^2$ norm of $T$, the exponent is $c_{m,k} = \frac{k-2}{(m+2k)(m+1)}$ for an integer $k \ge 3$, and
\[
\|h\|^2_{L^2(T(\mathcal{M}))} = \int_{T(\mathcal{M})} \|h(y)\|^2 d\mathrm{vol}_{T(\mathcal{M})}(y).
\]
\end{theorem}

\begin{proof}
We begin by computing the second variation of the reconstruction cost $L_{\mathrm{rec}}(S)$ with respect to the decoder $S$. For a given perturbation $h: T(\mathcal{M}) \to \mathcal{M}$ and a scalar $\epsilon$, we evaluate $L_{\mathrm{rec}}(S + \epsilon h)$
\begin{align*}
    L_{\mathrm{rec}}(S + \epsilon h) &= \mathbb{E}_{x \sim \mu} \left[ \|(S + \epsilon h)(T(x)) - x\|^2 \right] \\
    &= \mathbb{E}_{x \sim \mu} \left[ \|S(T(x)) - x\|^2 + 2\epsilon \langle S(T(x)) - x, h(T(x)) \rangle + \epsilon^2 \|h(T(x))\|^2 \right].
\end{align*}
Taking the second derivative with respect to $\epsilon$ and evaluating at $\epsilon = 0$ yields the second variation
\begin{equation}\label{eq:second-variation-def}
    \delta^2 L_{\mathrm{rec}}(S)(h, h) = \left. \frac{d^2}{d\epsilon^2} L_{\mathrm{rec}}(S+\epsilon h) \right|_{\epsilon=0} = 2 \mathbb{E}_{x \sim \mu} \left[ \|h(T(x))\|^2 \right].
\end{equation}
Writing the expectation explicitly as an integral over the manifold $\mathcal{M}$, we have
\begin{equation}\label{eq:integral-M}
    \delta^2 L_{\mathrm{rec}}(S)(h, h) = 2 \int_{\mathcal{M}} \|h(T(x))\|^2 \rho(x) d\mathrm{vol}_{\mathcal{M}}(x).
\end{equation}
By assumption, $T$ is a smooth embedding. We apply the change of variables formula from \Cref{lem:change-of-var}. Letting $y = T(x)$, the relationship between the volume measures is governed by the Jacobian $J_T(x) = \sqrt{\det(dT_x^\top dT_x)}$, such that $d\mathrm{vol}_{T(\mathcal{M})}(y) = J_T(x) d\mathrm{vol}_{\mathcal{M}}(x)$. 

Substituting $d\mathrm{vol}_{\mathcal{M}}(x) = J_T(x)^{-1} d\mathrm{vol}_{T(\mathcal{M})}(y)$ and $x = T^{-1}(y)$ into \eqref{eq:integral-M}, the integral over the pushforward space $T(\mathcal{M})$ becomes
\begin{equation}\label{eq:integral-TM}
    \delta^2 L_{\mathrm{rec}}(S)(h, h) = 2 \int_{T(\mathcal{M})} \|h(y)\|^2 \frac{\rho(T^{-1}(y))}{J_T(T^{-1}(y))} d\mathrm{vol}_{T(\mathcal{M})}(y).
\end{equation}
Next, we bound the weighting term inside the integral. The data density is strictly bounded by $0 < \rho_{\min} \leq \rho(x) \leq \rho_{\max}$. Furthermore, \Cref{cor:A-bound} provides uniform global bounds on the Jacobian of the encoder in terms of the GME cost
\begin{equation*}
    \exp\left( - K \varepsilon_{\mathrm{GME}}^{c_{m,k}} \right) \leq J_T(x) \leq \exp\left( K \varepsilon_{\mathrm{GME}}^{c_{m,k}} \right).
\end{equation*}
Taking the reciprocal of the Jacobian bounds and multiplying by the density bounds, we obtain
\begin{equation}\label{eq:weight-bounds}
    \rho_{\min} \exp\left( - K \varepsilon_{\mathrm{GME}}^{c_{m,k}} \right) \leq \frac{\rho(T^{-1}(y))}{J_T(T^{-1}(y))} \leq \rho_{\max} \exp\left( K \varepsilon_{\mathrm{GME}}^{c_{m,k}} \right).
\end{equation}
Applying the lower and upper bounds from \eqref{eq:weight-bounds} directly to the integral in \eqref{eq:integral-TM} yields
\begin{equation*}
    2 \rho_{\min} \exp\left( - K \varepsilon_{\mathrm{GME}}^{c_{m,k}} \right) \int_{T(\mathcal{M})} \|h(y)\|^2 d\mathrm{vol}_{T(\mathcal{M})}(y) \leq \delta^2 L_{\mathrm{rec}}(S)(h, h),
\end{equation*}
and
\begin{equation*}
    \delta^2 L_{\mathrm{rec}}(S)(h, h) \leq 2 \rho_{\max} \exp\left( K \varepsilon_{\mathrm{GME}}^{c_{m,k}} \right) \int_{T(\mathcal{M})} \|h(y)\|^2 d\mathrm{vol}_{T(\mathcal{M})}(y).
\end{equation*}
Recognizing that $\int_{T(\mathcal{M})} \|h(y)\|^2 d\mathrm{vol}_{T(\mathcal{M})}(y) = \|h\|^2_{L^2(T(\mathcal{M}))}$, we recover the desired inequalities in \eqref{eq:hessian-bounds}. This completes the proof.
\end{proof}

Theorem \ref{thm:S-2} provides a crucial theoretical link between the geometric distortion of the latent space and the optimization dynamics of the decoder. Specifically, the theorem establishes that the eigenvalues of the Hessian of the reconstruction loss are bounded by an exponential function of the GME error, $\varepsilon_{\mathrm{GME}}$. As the encoder successfully minimizes the GME cost ($\varepsilon_{\mathrm{GME}} \to 0$), the exponential terms approach $1$, and the condition number of the Hessian tightly converges to the inherent data density ratio $\rho_{\max} / \rho_{\min}$. 

Because a well-conditioned Hessian directly translates to faster and more stable convergence for gradient-based optimization methods, this result implies that minimizing the GME cost inherently accelerates the training of the decoder. By preserving the intrinsic geometry of the manifold, the encoder sculpts a latent landscape where the decoder's objective function is highly regularized and easy to navigate. 

Furthermore, while this result is framed around our specific GME formulation, the underlying mechanism holds for any general pairwise metric-matching objective with sufficient regularity of the form $\mathbb{E}[(c(x,x') - c(T(x),T(x')))^2]$. As discussed in the previous section, our specific choice to apply a logarithmic transformation to the distances in the GME objective is not strictly necessary for this theoretical guarantee to hold; rather, it was introduced primarily to guarantee numerical stability during empirical training.

\section{Efficiency of GPE in LGM}\label{sec:gpe-lgm}
In this section, we provide theoretical error bounds to demonstrate how the GPE framework guarantees high-quality generation of the input data distribution. A central insight of our analysis is that minimizing the GME cost explicitly regularizes the geometric distortion of the encoder. Specifically, a smaller GME cost implies that the embedding $T$ satisfies a practical, probabilistic relaxation of the bi-Lipschitz property, which we refer to as being \textit{weak $\alpha$-bi-Lipschitz} (with $\alpha \ge 1$ close to $1$). Because the quality of the final generated samples relies heavily on the regularity of the latent space, controlling this geometric distortion $\alpha$ is paramount.

To formalize this, let us briefly review the mechanics of the Latent Generative Model (LGM) framework (see \Cref{fig:desc-t-s-r} for a visual summary). Assume we have an input data distribution $\mu \in \mathbb{P}(\mathcal{M})$ supported on a manifold $\mathcal{M} \subset \mathbb{R}^D$, and a simple prior latent distribution $\nu \in \mathbb{P}(\mathbb{R}^d)$. The generation process consists of three core components
\begin{enumerate}
    \item \textbf{Encoding:} An encoder $T: \mathcal{M} \to \mathbb{R}^d$ embeds the data distribution into the latent space, producing the pushforward distribution $T_\#\mu$.
    \item \textbf{Latent Flow:} A generative flow map $R: \mathbb{R}^d \to \mathbb{R}^d$ is trained to map samples from the prior $\nu$ to the embedded distribution, such that $R_\#\nu \approx T_\#\mu$. In modern architectures, $R$ is typically parameterized via diffusion models, such as score-based generative models \cite{songscore} or conditional flow matching \cite{lipman2022flow}.
    \item \textbf{Decoding:} A decoder $S: T(\mathcal{M}) \to \mathbb{R}^D$, trained to approximate the inverse map $T^{-1}$, projects the generated latent samples back to the original data manifold.
\end{enumerate}
Ultimately, the overall generative pipeline is defined by the composite map $G = S \circ R$. The discrepancy between the true distribution $\mu$ and the generated distribution $G_\#\nu$ is inextricably linked to the geometric properties of $T$ and $S$. 

\begin{figure}[!ht]
    \centering
\tikzset{every picture/.style={line width=0.75pt}} 
\begin{tikzpicture}[x=0.75pt,y=0.75pt,yscale=-0.85,xscale=0.85]
\draw   (78.58,65.8) .. controls (74.93,74.71) and (90.93,81.93) .. (114.31,81.93) .. controls (137.69,81.93) and (159.6,74.71) .. (163.25,65.8) .. controls (166.9,56.89) and (188.82,49.67) .. (212.2,49.67) .. controls (235.58,49.67) and (251.57,56.89) .. (247.92,65.8) -- (195.04,194.87) .. controls (198.69,185.96) and (182.7,178.73) .. (159.31,178.73) .. controls (135.93,178.73) and (114.02,185.96) .. (110.37,194.87) .. controls (106.72,203.78) and (84.81,211) .. (61.43,211) .. controls (38.05,211) and (22.05,203.78) .. (25.7,194.87) -- cycle ;
\draw   (158.97,92.67) -- (160,107.85) -- (186.99,110.29) -- (156.82,122.11) -- (148.69,138.8) -- (129.01,130.92) -- (96.99,138.8) -- (115,122.11) -- (103.35,110.29) -- (134.16,107.85) -- cycle ;
\draw    (248.67,123.67) -- (350,123.67) ;
\draw [shift={(350,123.67)}, rotate = 180] [fill={rgb, 255:red, 0; green, 0; blue, 0 }  ][line width=0.08]  [draw opacity=0] (8.93,-4.29) -- (0,0) -- (8.93,4.29) -- cycle    ;
\draw    (248.33,150.33) -- (350,150.33) ;
\draw [shift={(245.33,150.33)}, rotate = 0] [fill={rgb, 255:red, 0; green, 0; blue, 0 }  ][line width=0.08]  [draw opacity=0] (8.93,-4.29) -- (0,0) -- (8.93,4.29) -- cycle    ;
\draw   (437.17,88.33) -- (449.66,114.53) -- (477.59,118.74) -- (457.38,139.13) -- (462.15,167.93) -- (437.17,154.33) -- (412.19,167.93) -- (416.96,139.13) -- (396.75,118.74) -- (424.68,114.53) -- cycle ;
\draw   (605,133.17) .. controls (605,110.06) and (623.73,91.33) .. (646.83,91.33) .. controls (669.94,91.33) and (688.67,110.06) .. (688.67,133.17) .. controls (688.67,156.27) and (669.94,175) .. (646.83,175) .. controls (623.73,175) and (605,156.27) .. (605,133.17) -- cycle ;
\draw    (483.67,133.67) -- (595.67,133.67) ;
\draw [shift={(480.67,133.67)}, rotate = 0] [fill={rgb, 255:red, 0; green, 0; blue, 0 }  ][line width=0.08]  [draw opacity=0] (8.93,-4.29) -- (0,0) -- (8.93,4.29) -- cycle    ;
\draw   (375,44.67) -- (695.67,44.67) -- (695.67,219.67) -- (375,219.67) -- cycle ;
\draw (260,95) node [anchor=north west][inner sep=0.75pt]   [align=left] {Encoder $T$};
\draw (260,158) node [anchor=north west][inner sep=0.75pt]   [align=left] {Decoder $S$};
\draw (494,106.33) node [anchor=north west][inner sep=0.75pt]   [align=left] {Flow map $R$};
\draw (108.67,223) node [anchor=north west][inner sep=0.75pt]   [align=left] {$\mathcal{M}\subset \mathbb{R}^D$};
\draw (85.33,150) node [anchor=north west][inner sep=0.75pt]   [align=left] {$\mu \in \mathbb{P}(\mathcal{M}) $};
\draw (390,180) node [anchor=north west][inner sep=0.75pt]   [align=left] {$T_\#\mu \in \mathbb{P}(T(\mathcal{M}))$};
\draw (610,180) node [anchor=north west][inner sep=0.75pt]   [align=left] {$\nu \in\mathbb{P}(\mathbb{R}^d) $};
\draw (520,223) node [anchor=north west][inner sep=0.75pt]   [align=left] {$\mathbb{R}^d$};
\end{tikzpicture}
\caption{Visualization of the latent generative model framework.}
\label[figure]{fig:desc-t-s-r}
\end{figure}

To guarantee that expected reconstruction errors over the generated latent samples remain well-behaved, we require a necessary regularity condition on the latent flow map $R$. 

\begin{assumption}[Good Training]\label{assump:good-training}
    We assume the latent generative flow map $R$ is sufficiently well-trained such that the generated density $R_\#\nu$ remains proportionally bounded relative to the maximum density of the embedded data. Specifically, there exists a constant $c \geq 1$ such that $\sup_{y \in T(\mathcal{M})} \rho_R(y) \leq c \rho_{\max} \beta^m$, where $\beta$ bounds the Jacobian of $T$.
\end{assumption}
This assumption acts as a critical safeguard, ensuring the generative model does not pathologically concentrate mass in highly distorted or poorly reconstructed regions of the latent space. With this condition met, the total generation error of the pipeline is governed by two primary sources of inaccuracy: the population reconstruction error $\epsilon_{\mathrm{rec}}$ (how accurately the decoder $S$ reconstructs the original data) and the latent generative error $\epsilon_{\mathrm{dif}}$ (how well the flow map $R$ matches the embedded data).

Conceptually, if the encoder $T$ were \textit{strictly} $\alpha$-bi-Lipschitz everywhere, triangle inequalities would bound the total generation error by $\mathcal{O}(\alpha \epsilon_{\mathrm{dif}} + \epsilon_{\mathrm{rec}})$. In this ideal scenario, the geometric distortion factor $\alpha$ acts as a direct multiplier on the latent error. When $\alpha \to 1$, the encoder perfectly preserves distances, meaning any small mistakes made by the latent flow model $R$ remain small when mapped back to the data space. Conversely, a large $\alpha$ means the decoder violently expands certain regions, drastically amplifying latent inaccuracies.

While this highlights the necessity of learning a nearly isometric embedding, requiring a neural network to be strictly $\alpha$-bi-Lipschitz everywhere is often too rigid in practice. Instead, by minimizing the GME cost, the GPE encoder is forced to be \textit{weakly} $\alpha$-bi-Lipschitz, preserving geometry tightly over a set of high probability rather than strictly everywhere. \Cref{thm:generative-map3} formalizes this intuition, demonstrating that we can bound the true generation error using the empirical GME cost directly. Crucially, to separate the well-known challenges of deep learning generalization from our geometric claims, we state our theorem directly in terms of the population reconstruction loss $\epsilon_{\mathrm{rec}}$ of the trained decoder $S$.
\begin{theorem}[Error Bound of GPE-based LGM]\label{thm:generative-map3}
    Fix $\epsilon > 0$ and $\alpha, \beta \geq 1$ such that $\beta > \alpha$. Let $\mu \in \mathbb{P}(\mathcal{M})$ be the data distribution, and let $\hat{\mu}_n = \frac{1}{n} \sum_{i=1}^n \delta_{x_i}$ be an empirical distribution of $n$ independent samples. Let $T \in \mathcal{T}_\beta^k$ be the encoder with discrete GME cost $\epsilon_{\GME} = \widehat{\GME}(T, \hat{\mu}_n)$ (see \Cref{def:discrete-gme}). 
    Suppose the trained generative flow map $R$ satisfies \Cref{assump:good-training}, yielding the latent generative approximation error $\epsilon_{\mathrm{dif}} = W_1(T_\#\hat\mu_n, R_\#\nu)$. Furthermore, assume the trained decoder $S$ achieves a population reconstruction loss $\epsilon_{\mathrm{rec}} = \mathbb{E}_{x\sim\mu} \|S(T(x)) - x\|^2$. 
    
    Then, there exist positive constants $C_1$ (depending on $\mathrm{diam}(\mathcal{M}), \rho_{\max},$ and $\rho_{\min}$) and $C_2$ (depending on $\mu$ and $m$) such that with probability at least $1 - 2 \exp\Bigl(- \frac{n\epsilon^2}{2(1 + \epsilon/3)}\Bigr)$, the total generation error is bounded by:
    \begin{equation*}
        W_1(\mu, (S\circ R)_\#\nu) \leq C_1\beta^m \left(\frac{\epsilon_{\GME} + (\log\beta)^2\epsilon}{(\log\alpha)^2} + \epsilon_{\mathrm{rec}} \right) + \alpha \Bigl(\epsilon_{\mathrm{dif}} + C_2 \beta n^{-1/m}\Bigr) + 2(\alpha^2-1)^{1/2}.
    \end{equation*}
\end{theorem}

\begin{remark}[Bounding Latent Error and the Curse of Dimensionality]\label{rem:flow-matching-bound}
    The latent error $\epsilon_{\mathrm{dif}} = W_1(T_\#\hat\mu_n, R_\#\nu)$ measures the discrepancy introduced exclusively by the generative flow map $R$. If $R$ is parameterized as a continuous normalizing flow trained via flow matching, this distance is bounded by $\mathcal{O}(e^{\hat{K}} \sqrt{H(v)})$, where $H(v)$ is the velocity estimation error and $\hat{K}$ is the Lipschitz constant \cite[Proposition~3]{albergo2022building}. Crucially, estimating $H(v)$ directly in the ambient space $\mathbb{R}^D$ incurs a severe sample complexity penalty. By restricting the flow model to a compact latent space $\mathbb{R}^d$ ($d \ll D$), our framework avoids this statistical curse of dimensionality, ensuring $\epsilon_{\mathrm{dif}}$ remains small.
\end{remark}

\subsection{Proof of \Cref{thm:generative-map3}}\label{sec:proof-cor}

We first bound the decoder's error when mapping the generated latent distribution back to the data manifold, utilizing the population reconstruction loss.

\begin{lemma}\label{lem:rec-bound}
    Suppose $\mu \in \mathbb{P}(\mathcal{M})$ satisfies \Cref{assump:good-training}, $T \in \mathcal{T}_\beta^k$ is a diffeomorphism onto its image, and $\xi \in \mathbb{P}(T(\mathcal{M}))$ has a density bounded by $\xi_{\max}$. Then the expected reconstruction error over the latent distribution satisfies:
    \begin{equation*}
        \|S - T^{-1}\|_{L^1(\xi)} = \int_{T(\mathcal{M})} \|S(y) - T^{-1}(y)\| \, d\xi(y) \leq \frac{\xi_{\max}\beta^m}{\rho_{\min}} \epsilon_{\mathrm{rec}}.
    \end{equation*}
\end{lemma}
\begin{proof}
    By the change of variables formula, $dT^{-1}_\#\xi(x) = f(T(x)) J_T(x) \, d\mathrm{vol}_\mathcal{M}(x)$. Since $T \in \mathcal{T}_\beta^k$, its Jacobian determinant is bounded by $\beta^m$. Using this and $\rho(x) \geq \rho_{\min}$, we bound the integral:
    \begin{align*}
        \int_{T(\mathcal{M})} \|S(y) - T^{-1}(y)\| \, d\xi(y)
        &= \int_{\mathcal{M}} \|S(T(x)) - x\| f(T(x)) J_T(x) \, d\mathrm{vol}_{\mathcal{M}}(x)\\
        &\leq \xi_{\max}\beta^m \int_{\mathcal{M}} \|S(T(x)) - x\| \frac{\rho(x)}{\rho(x)} \, d\mathrm{vol}_\mathcal{M}(x) \leq \frac{\xi_{\max}\beta^m}{\rho_{\min}} \epsilon_{\mathrm{rec}}.
    \end{align*}
\end{proof}

We now formally define the discrete GME cost and its concentration properties.

\begin{definition}\label{def:discrete-gme}
    Given an empirical measure $\hat{\mu}_n = \frac{1}{n}\sum_{i=1}^n \delta_{x_i}$ sampled independently from $\mu$, the discrete GME cost is:
    \begin{equation*}
        \widehat{\GME}(T,\hat{\mu}_n) := \frac{1}{n(n-1)} \sum_{i=1}^n\sum_{j \neq i} \left(\log\left(\frac{1+\|T(x_i)-T(x_j)\|^2}{1+\|x_i-x_j\|^2}\right)\right)^2.
    \end{equation*}
\end{definition}

\begin{proposition}\label{prop:discrete-cont}
    Under the setup of \Cref{thm:generative-map3}, the event $|\GME(T) - \widehat{\GME}(T,\hat{\mu}_n)| < 4(\log \beta)^2\epsilon$ holds with probability at least $1 - 2 \exp\Bigl(- \frac{n\epsilon^2}{2(1 + \epsilon/3)}\Bigr)$.
\end{proposition}
\begin{proof}
    The discrete cost is a $U$-statistic with symmetric kernel $g(x,x') = \left(\log\frac{1+\|T(x)-T(x')\|^2}{1+\|x-x'\|^2}\right)^2$. Since $T \in \mathcal{T}_\beta^k$, the kernel is uniformly bounded by $b \leq 4(\log\beta)^2$, and its variance by $\sigma^2 \leq 16(\log \beta)^4$. Applying Bernstein's inequality for $U$-statistics \cite[Theorem~5.15]{calder2020calculus} yields the result.
\end{proof}

The following lemma bounds the latent mapping distance by separating the error into a well-behaved weak bi-Lipschitz region and a bounded penalty term for the highly distorted remainder. 

\begin{lemma}\label{lem:D-d-bound}
    Given the setup of \Cref{thm:generative-map3}, let $\eta \in \mathbb{P}(\mathcal{M})$ possess a density bounded by $\eta_{\max}$. Then, there exists a constant $C > 0$ depending only on $\mathrm{diam}({\mathcal{M}})$, $\rho_{\min}$, $\rho_{\max}$, and $\eta_{\max}$ such that with probability at least $1-2\exp\left(-\frac{n\epsilon^2}{2(1+\epsilon/3)}\right)$:
    \begin{equation*}
        W_1(\hat{\mu}_n, \eta) \leq \alpha W_1(T_\#\hat{\mu}_n, T_\#\eta) + (\alpha^2 - 1)^{1/2} + \frac{C(\epsilon_{\GME} + (\log\beta)^2\epsilon)}{(\log\alpha)^2}.
    \end{equation*}
\end{lemma}
\begin{proof}
    Let $g(x,y) = \left(\log\frac{1+\|T(x)-T(y)\|^2}{1+\|x-y\|^2}\right)^2$. By \Cref{prop:discrete-cont}, with high probability $\GME(T) \leq \epsilon_{\GME} + 4(\log\beta)^2\epsilon$. We define the "bad" set $B^c := \{ x \in \mathcal{M} : \int_{\mathcal{M}} g(x,y) d\mu(y) > (\log\alpha)^2 \}$. By Markov's inequality, $\mu(B^c) \leq \GME(T) / (\log\alpha)^2$. Since $\eta$ has a bounded density, $\eta(B^c) \leq \frac{\eta_{\max}}{\rho_{\min}} \mu(B^c)$. For any optimal transport plan $\gamma \in \Pi(\hat{\mu}_n, \eta)$, the mass of the joint bad set $(B \times B)^c$ is bounded by the marginals. 
    
    On the "good" set $A := B \times B$, the weak bi-Lipschitz condition implies $\|x-y\| \leq \alpha\|T(x)-T(y)\| + (\alpha^2 - 1)^{1/2}$. Splitting the transport integral:
    \begin{align*}
        \int_{\mathcal{M}^2} \|x-y\| \, d\gamma(x,y) &= \int_{A} \|x-y\| \, d\gamma(x,y) + \int_{\mathcal{M}^2 \setminus A} \|x-y\| \, d\gamma(x,y) \\
        &\leq \alpha \int_A \|T(x) - T(y)\| d\gamma(x,y) + (\alpha^2 - 1)^{1/2} + \mathrm{diam}(\mathcal{M})\gamma(A^c).
    \end{align*}
    Recognizing that $\gamma_T = (T \times T)_\# \gamma \in \Pi(T_\# \hat{\mu}_n, T_\# \eta)$, taking the infimum over $\gamma$ yields the desired bound, where the penalty term scales directly with $\mu(B^c)$.
\end{proof}

\begin{proof}[Proof of \Cref{thm:generative-map3}]
    By the definition of the Wasserstein metric, the composite mapping allows us to write:
    \begin{equation}\label{eq:thm-proof-00}
        W_1(\hat{\mu}_n, (S \circ R)_\#\nu) \leq W_1(\hat{\mu}_n, (T^{-1}\circ R)_\#\nu) + \int_{T(\mathcal{M})}\|T^{-1}(y) - S(y)\| \, dR_\#\nu(y).
    \end{equation}
    Let $\eta = (T^{-1}\circ R)_\#\nu$. By the change of variables, its density is bounded by $\eta_{\max} \leq \beta^m c \rho_{\max}$. Applying \Cref{lem:D-d-bound} to the first term, we obtain:
    \begin{equation}\label{eq:xy-T-bound-0}
        W_1(\hat{\mu}_n, \eta) \leq \alpha W_1(T_\#\hat{\mu}_n, R_\#\nu) + (\alpha^2-1)^{1/2} + \frac{C' (\epsilon_{\GME} + 4(\log \beta)^2 \epsilon)}{(\log \alpha)^2}.
    \end{equation}
    For the second term, applying \Cref{lem:rec-bound} with $\xi = R_\#\nu$ gives:
    \begin{equation}\label{eq:eps-rec-bound}
        \int_{\mathbb{R}^d} \|T^{-1}(y) - S(y)\| \, dR_\#\nu \leq \frac{c\rho_{\max}\beta^{2m}}{\rho_{\min}}\epsilon_{\mathrm{rec}}.
    \end{equation}
    Combining \eqref{eq:thm-proof-00}, \eqref{eq:xy-T-bound-0}, and \eqref{eq:eps-rec-bound}, and recalling $\epsilon_{\mathrm{dif}} = W_1(T_\#\hat{\mu}_n, R_\#\nu)$, we have:
    \begin{equation}\label{eq:w1-discrete-nu}
        W_1(\hat{\mu}_n, (S \circ R)_\#\nu) \leq \alpha \epsilon_{\mathrm{dif}} + (\alpha^2 - 1)^{1/2} + C_1 \beta^m\left(\frac{\epsilon_{\GME} + (\log\beta)^2\epsilon}{(\log\alpha)^2} + \epsilon_{\mathrm{rec}}\right).
    \end{equation}
    Finally, by the triangle inequality, $W_1(\mu, (S\circ R)_\#\nu) \leq W_1(\hat{\mu}_n, (S\circ R)_\#\nu) + W_1(\mu, \hat{\mu}_n)$. We bound the empirical approximation error $W_1(\mu, \hat{\mu}_n)$ by applying \Cref{lem:D-d-bound} to map the distance into the latent space. Using empirical bounds for distributions on $m$-dimensional manifolds \cite{weed2019sharp}, the latent distance satisfies $W_1(T_\#\mu, T_\#\hat{\mu}_n) \leq C_2 \beta n^{-1/m}$. Summing these terms yields the final bound, completing the proof.
\end{proof}

\section{Algorithm}\label{sec:alg}
%

In this section, we outline the training procedures for the GPE framework. The process consists of three primary components: the geometry-preserving encoder $T$, the decoder $S$, and the latent flow-based model $R$. 

    \textbf{Parallel Training:} Once the encoder $T$ is fixed and the latent codes $z_i = T(x_i)$ are computed, the training of the decoder $S$ and the flow model $R$ are independent. This allows them to be trained in parallel, reducing the overall wall-clock training time.

\subsection{Algorithm for the GPE Encoder}

The first step is to compute the geometry-preserving encoder $T:\mathcal{M}\rightarrow\mathbb{R}^d$ by minimizing the discrete GME cost. We parameterize $T$ as a neural network $T_{w_T}$. This step ensures that the intrinsic geometry of the data manifold $\mathcal{M}$ is preserved in the latent space $\mathbb{R}^d$. The algorithm can be found in \Cref{alg:T}.

\begin{algorithm}[h]
    \caption{Computing the GPE Encoder}
    \label{alg:T}
 \begin{algorithmic}
    \STATE {\bfseries Input:} Dataset $\{x_i\}_{i=1}^n \subset \mathcal{M}$, latent dimension $d$, tolerance $\TOL$, learning rate $\eta_T$.
    \STATE {\bfseries Output:} Trained GPE encoder $T_{w_T}$.
    \WHILE{$\widehatGME(T_{w_T}, \{x_i\}) > \TOL$}
    \STATE ${w_T} \gets {w_T} - \eta_T \nabla_{w_T} \widehatGME(T_{w_T}, \{x_i\})$.
    \ENDWHILE
 \end{algorithmic}
\end{algorithm}

\subsection{Algorithm for the GPE Decoder}

The decoder $S:T(\mathcal{M})\to \R^D$ is trained to act as the inverse of the encoder ($S \approx T^{-1}$). We parameterize the decoder as $S_{w_S}$ and minimize a standard reconstruction loss. Because $T$ is geometry-preserving, the reconstruction task is well-conditioned. The algorithm can be found in \Cref{alg:S}.

\begin{algorithm}[th!]
    \caption{Computing the GPE Decoder}
    \label{alg:S}
 \begin{algorithmic}
    \STATE {\bfseries Input:} Dataset $\{x_i\}_{i=1}^n \subset \mathcal{M}$, trained encoder $T_{w_T}$, tolerance $\TOL$, learning rate $\eta_S$.
    \STATE {\bfseries Output:} GPE decoder $S_{w_S}$.
    \WHILE{$L_{\rm{rec}}(S_{w_S}) > {\TOL}$}
    \STATE $w_S \gets w_S - \eta_S \nabla_{w_S} \frac{1}{n} \sum_{i=1}^n \|x_i - S_{w_S}(T_{w_T}(x_i))\|^2$.
    \ENDWHILE
 \end{algorithmic}
 \end{algorithm}

As per \Cref{thm:S-2}, the optimal step size $\eta_S$ for the decoder depends on the distortion $\alpha$. A small GME cost results in lower distortion, allowing for a larger learning rate and faster convergence during the parallel training of the decoder.

\subsection{Training the Flow-Based Model with Latent Centering and Scaling}
To generate new samples, we train a flow-based model $R$ to map a standard normal prior $\nu = \mathcal{N}(0, I_d)$ to the latent distribution $T_\#\mu$. In practice, the empirical mean and scale of the latent embeddings $z_i = T(x_i) \in \mathbb{R}^d$ often deviate from the zero mean and unit variance of the Gaussian prior. To stabilize the training of the flow model, we first center the latent distribution and then introduce a global scaling constant $C > 0$ to align its empirical variation with that of the standard Gaussian.

We define the empirical mean of the latent codes as $\bar{z} = \frac{1}{n} \sum_{i=1}^n z_i.$
Let $z'_i = z_i - \bar{z}$ denote the centered latent codes. We seek a constant $C$ such that the total variance of the scaled codes, $\tilde{z}_i = z'_i / C$, matches the theoretical total variance of a random variable $\xi \sim \mathcal{N}(0, I_d)$, which is exactly $d$
\begin{equation*}\label{eq:variance-matching}
    \frac{1}{n} \sum_{i=1}^n \|\tilde{z}_i\|^2 = \mathbb{E}_{\xi \sim \mathcal{N}(0, I_d)} \big[\|\xi\|^2\big] = d.
\end{equation*}
Substituting $\tilde{z}_i = (z_i - \bar{z}) / C$ and solving for $C$ yields
\begin{equation}\label{eq:compute-C}
    C = \sqrt{\frac{1}{nd} \sum_{i=1}^n \|z_i - \bar{z}\|^2}.
\end{equation}
The flow model $R$ is trained to map the prior $\nu$ to the distribution of the normalized codes $\{\tilde{z}_i\}$. During inference, the output of the flow model must be re-scaled by $C$ and shifted by $\bar{z}$ to restore the original latent geometry. Thus, for a novel sample $\xi \sim \nu$, the generated data point is $x_{\rm{gen}} = S_{w_S}\big( \bar{z} + C \cdot R(\xi) \big).$

\begin{algorithm}[h]
    \caption{Training the Latent Flow and Generation}
    \label{alg:R}
 \begin{algorithmic}
    \STATE {\bfseries Input:} Latent codes $\{z_i\}_{i=1}^n \subset \mathbb{R}^d$, prior $\nu = \mathcal{N}(0, I)$.
    \STATE {\bfseries 1. Pre-processing (Centering and Scaling):} 
    \STATE Compute empirical mean: $\bar{z} = \frac{1}{n} \sum_{i=1}^n z_i$.
    \STATE Compute scaling factor: $C = \sqrt{\frac{1}{nd} \sum_{i=1}^n \|z_i - \bar{z}\|^2}$.
    \STATE Set $\tilde{z}_i = (z_i - \bar{z}) / C$ for all $i=1, \dots, n$.
    \STATE {\bfseries 2. Training:} Train flow map $R$ such that $R_\# \nu \approx \text{Law}(\tilde{z})$.
    \STATE {\bfseries 3. Generation:} 
    \STATE Sample $\xi \sim \nu$.
    \STATE Compute $x_{\rm{gen}} = S_{w_S}(\bar{z} + C \cdot R(\xi))$.
 \end{algorithmic}
\end{algorithm}

\section{Numerical results}\label{sec:numeric}

In this section, we present numerical experiments demonstrating the effectiveness of our proposed method across various datasets, highlighting its efficiency in both reconstruction and generation tasks compared to other methods.

\subsection{Visualization of encoder differences in GPE and VAE}

In this experiment, our goal is to visualize the differences between the encoders from GPE and VAE. The primary objective of the GPE encoder is to embed the data distribution while preserving its local geometric structure, whereas the VAE encoder aims to map the data distribution to a standard Gaussian prior in the latent space. 

To demonstrate this, we use an artificial dataset in $\mathbb{R}^{500}$ generated from a mixture of $K$ Gaussians (where $K = 4, 16$). We explicitly define our underlying data manifold $\mathcal{M}$ as the two-dimensional subspace spanned by the first two coordinate axes of the ambient space, i.e., $\mathcal{M} = \{(x_1, x_2, 0, \dots, 0) \in \mathbb{R}^{500}\}$. 

The centers of these Gaussians are placed exclusively on $\mathcal{M}$. Specifically, the mean of the $k$-th Gaussian is defined as $m_k = (\bar{x}_{k,1}, \bar{x}_{k,2}, 0, \dots, 0) \in \mathbb{R}^{500}$, where the 2D coordinates $(\bar{x}_{k,1}, \bar{x}_{k,2})$ are arranged in a $2 \times 2$ and $4 \times 4$ grid with coordinates in $\{0, 2\}^2$ and $\{-2, 0, 2, 4\}^2$. The covariance matrix $\Sigma$ for each Gaussian is identical and diagonal, with the first two diagonal entries being $0.15^2$ (representing variance along the manifold $\mathcal{M}$) and the remaining 498 entries set to $10^{-4}$ (representing small ambient noise orthogonal to $\mathcal{M}$). We uniformly sample $N=10,000$ points from this mixture model to train both GPE and VAE encoders using a 4-layer fully-connected neural network.

The results are displayed in \Cref{fig:toy-comparison}. The first column shows the input data distribution $\mu$ in $\mathbb{R}^{500}$ by plotting the first two coordinates (the projection onto $\mathcal{M}$), while the second and third columns show the embedded distributions $T_\#\mu$ in $\mathbb{R}^2$ from the trained GPE and VAE encoders,

\begin{figure}[h!]
    \centering
    
    \begin{subfigure}[b]{0.25\textwidth}
        \centering
        \includegraphics[height=2.9cm]{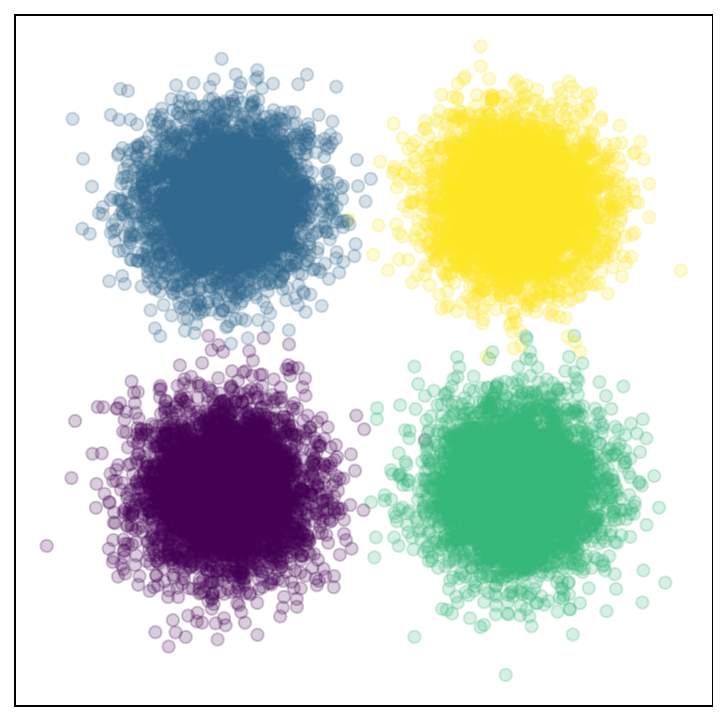}
        \caption{$\mu$ in 500D}
    \end{subfigure}
    \begin{subfigure}[b]{0.25\textwidth}
        \centering
        \includegraphics[height=2.9cm]{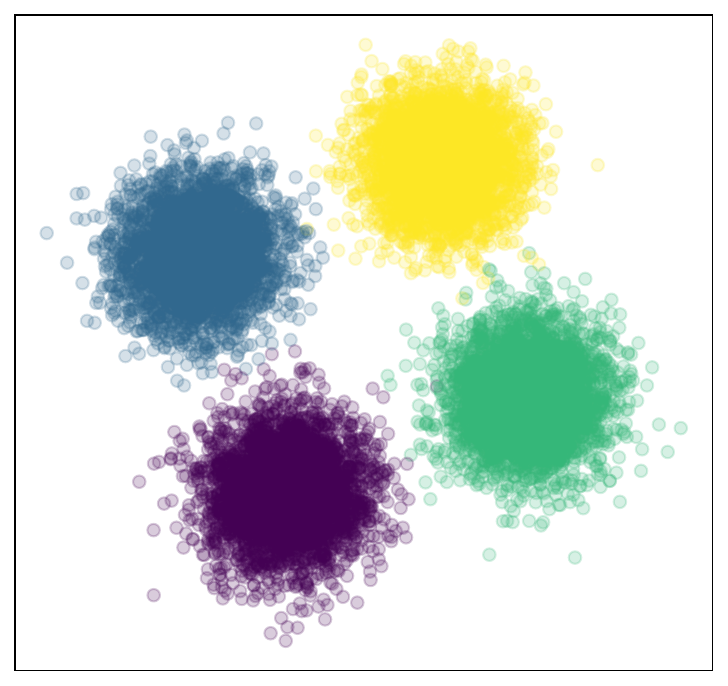}
        \caption{$T_\#\mu$ from GPE}
    \end{subfigure}
    \begin{subfigure}[b]{0.25\textwidth}
        \centering
        \includegraphics[height=2.9cm]{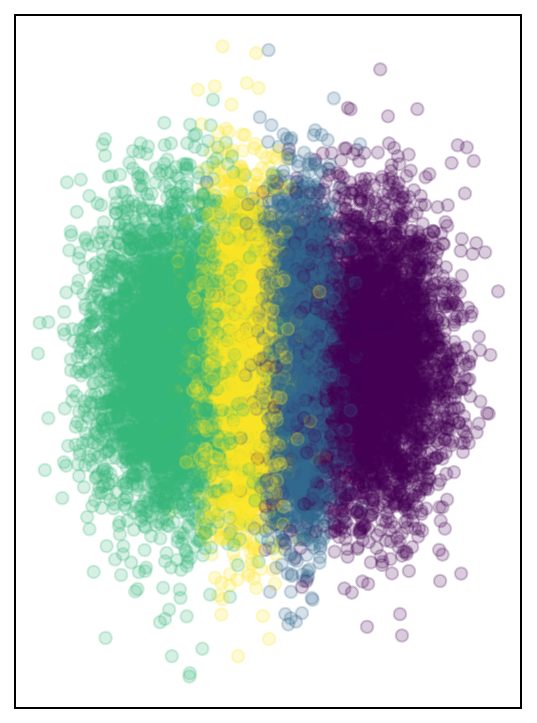}
        \caption{$T_\#\mu$ from VAE}
    \end{subfigure}
    
    \begin{subfigure}[b]{0.25\textwidth}
        \centering
        \includegraphics[height=2.9cm]{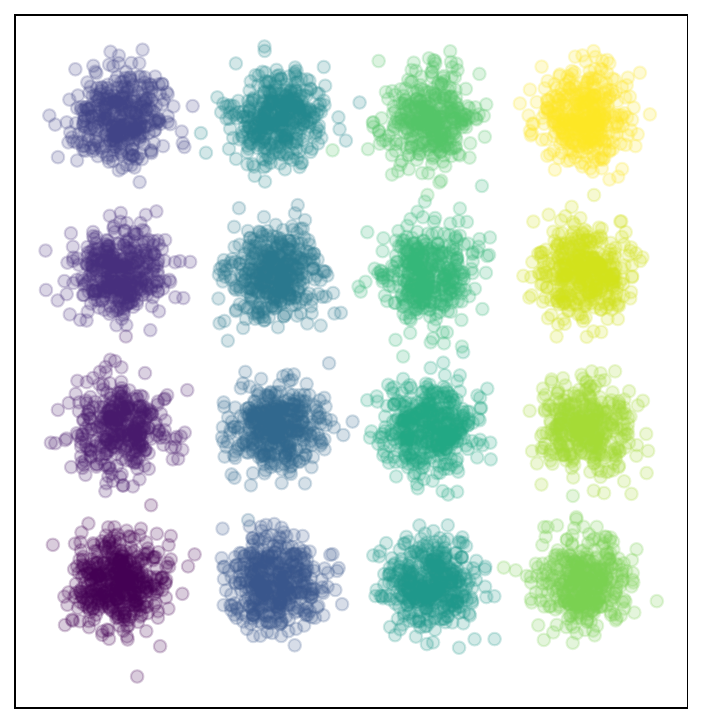}
        \caption{$\mu$ in 500D}
    \end{subfigure}
    \begin{subfigure}[b]{0.25\textwidth}
        \centering
        \includegraphics[height=2.9cm]{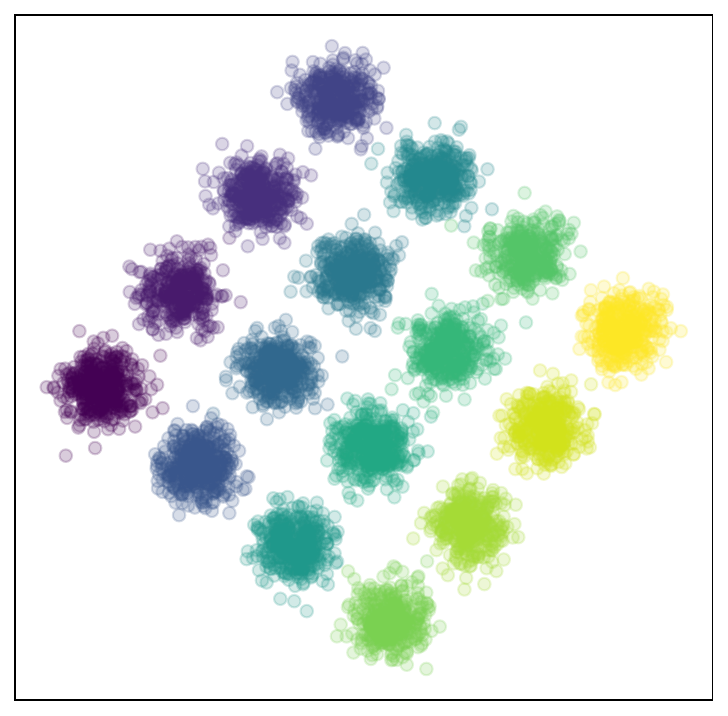}
        \caption{$T_\#\mu$ from GPE}
    \end{subfigure}
    \begin{subfigure}[b]{0.25\textwidth}
        \centering
        \includegraphics[height=2.9cm]{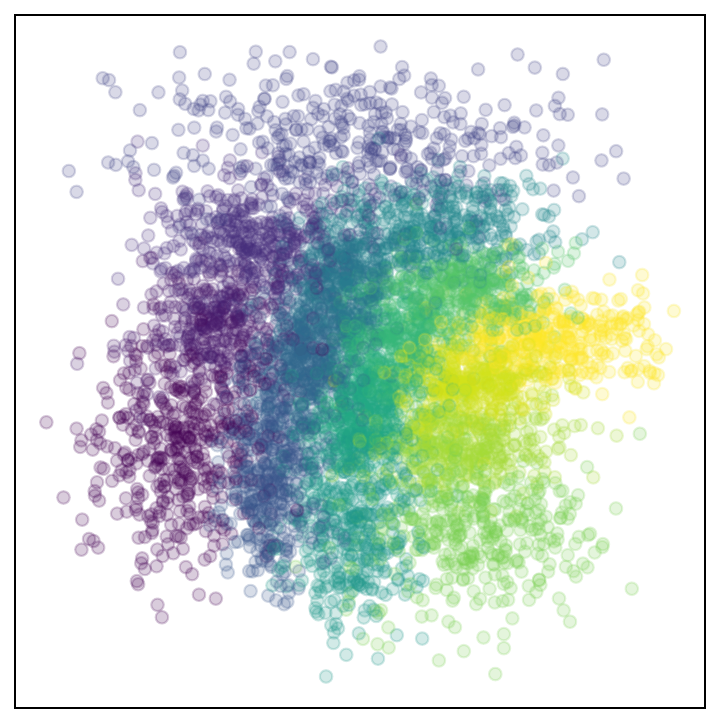}
        \caption{$T_\#\mu$ from VAE}
    \end{subfigure}
  
    \caption{
Comparison of embedded data distributions from GPE and VAE. The first column shows the input data distribution $\mu$ in 500D, while the second and third columns show the embedded distributions $T_\#\mu$ from GPE and VAE, respectively, in 2D.}
    \label{fig:toy-comparison}
\end{figure}

\subsection{Evaluating Local and Global Isometry in Latent Representations}\label{subsec:isometry_experiments}

To establish a well-conditioned latent space, an encoder must balance global and local geometric structures. Why employ a logarithmic cost function, like our proposed GME loss in \eqref{eq:gme-cost}, rather than a quadratic penalty like standard MDS? As shown analytically, the logarithmic formulation yields a tighter Hessian. This bounded curvature stabilizes the optimization landscape, preventing the severe gradient explosions common in unscaled quadratic methods. The following experiments evaluate these practical implications, specifically investigating how different cost functions impact overall stability, the trade-off between preserving global versus local distances, and the direct validation of our theoretical local distortion bounds.

We evaluate on the CIFAR-10 dataset using an encoder architecture consisting of four linear layers with Softplus activations. This network maps ambient inputs ($D=3072$) to a fixed latent space dimension of $d = 25$. To rigorously test the optimization stability of each cost function, we train the models across three varying learning rates: $1.2 \times 10^{-3}$, $1.2 \times 10^{-2}$, and $1.2 \times 10^{-1}$. We compare the GME loss against two baseline embedding methods
\begin{enumerate}
    \item \textbf{MDS1 (Vanilla MDS):} $\mathbb{E}[(\|T(x)-T(x')\|^2 - \|x-x'\|^2)^2]$.
    \item \textbf{MDS2 (Scaled MDS):} Incorporates dimension-based scaling to stabilize gradients and handle large distances: $\mathbb{E}[(\|T(x)-T(x')\|^2/d - \|x-x'\|^2/D)^2]$.
\end{enumerate}

To quantify the geometric distortion introduced by the encoder $T$, we track two primary metrics evaluated on a fixed set of test data points at the end of each epoch. First, we measure {global isometry ($\alpha_{95}$)} to capture worst-case global scaling. By computing the 95th percentile of the bi-Lipschitz bounds across the evaluated points, this metric measures the maximum distortion between arbitrary pairs of images. Second, we assess {local distortion} by numerically estimating the expected local geometric alteration $\int_{\mathcal{M}} A(x) d\mu(x)$ defined from \Cref{thm:local-distortion-bound}. This directly quantifies how closely the local Jacobian norm $\|d_x T\|$ hovers around the ideal isometric value of $1$.

\Cref{fig:isometry_comparison} illustrates stability and distortion trade-offs across learning rates. Notably, MDS1 suffers immediate gradient explosions ([X] markers) in all configurations due to unbounded gradients caused by its unconstrained Hessian on large squared distances. 

While MDS2 and GME achieve similar global isometry at lower learning rates (\Cref{fig:global_iso}), MDS2 proves brittle and explodes at the highest rate ($1.2 \times 10^{-1}$). Conversely, GME remains entirely stable across all configurations and achieves the lowest local distortion $\int A(x) d\mu$ (\Cref{fig:local_iso}). GME's logarithmic formulation naturally discounts massive global errors, organically shifting the optimization focus toward preserving fine-grained local relationships essential for stable latent encoding.

Finally, \Cref{fig:validation} directly validates \Cref{thm:local-distortion-bound}. According to our bound, the expected local distortion scales as $\mathcal{O}\big(\GME(T)^{\frac{2k-4}{m+2k}}\big)$. Because the network employs the analytically infinitely differentiable ($C^\infty$) Softplus activation, the theoretical limiting slope as $k \to \infty$ is exactly $1$. The log-log plot of expected local distortion versus GME loss reveals an empirical slope that closely approximates this theoretical limit when trained with a learning rate of $1.2 \times 10^{-2}$. While there is a very slight deviation from the exact unit slope, a natural artifact of the network's finite capacity and finite-precision arithmetic \citep{wang2021understanding}, the overall trajectory confirms our theoretical analysis. Ultimately, this demonstrates that the local distortion shrinks in strong agreement with the infinite-smoothness limit of our bound, constrained by the data's intrinsic dimensionality.

\begin{figure}[htbp]
    \centering
    \begin{subfigure}[b]{0.49\textwidth}
        \centering
        \includegraphics[width=\textwidth]{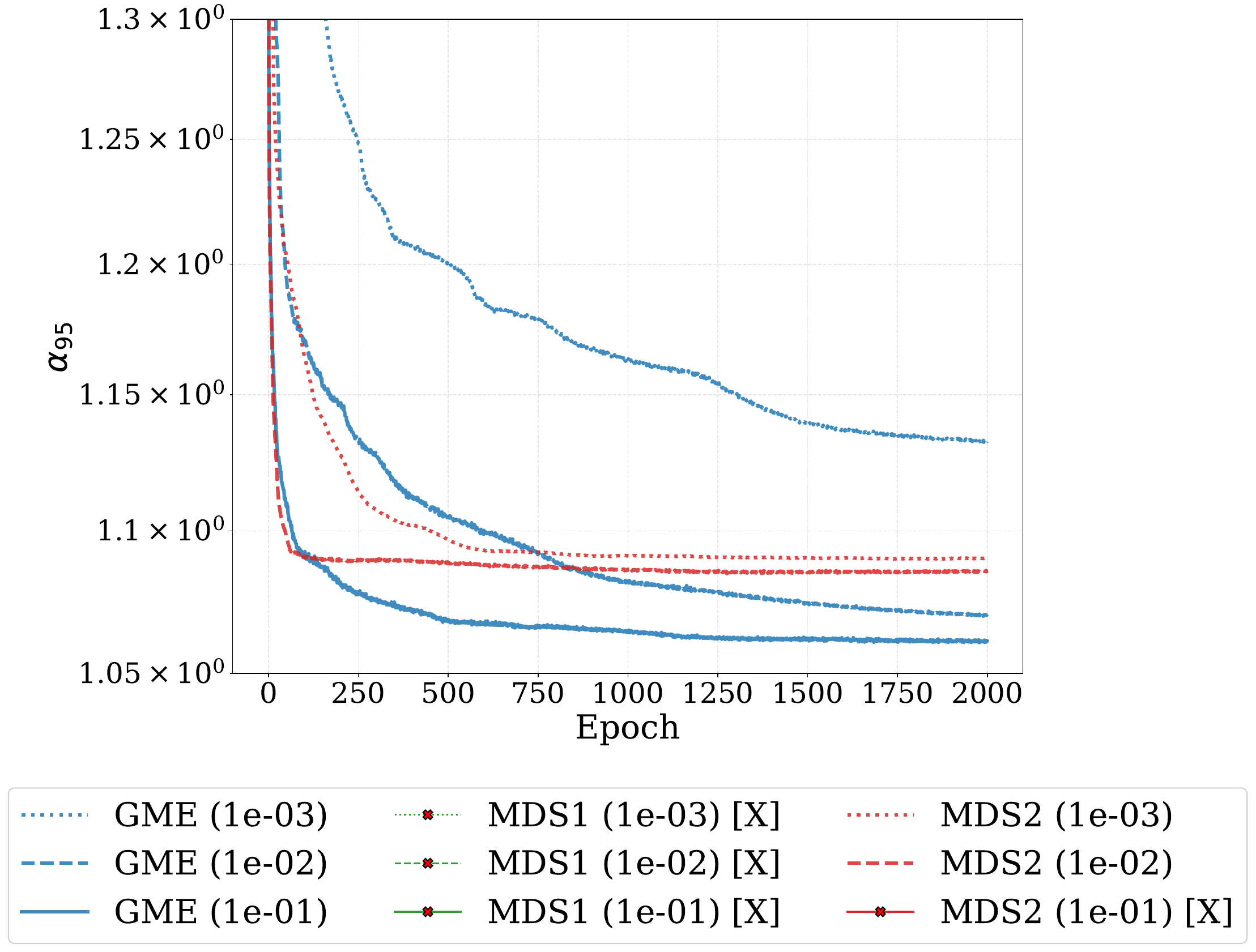}
        \caption{Global Bi-Lipschitz Bounds ($\alpha_{95}$)}
        \label{fig:global_iso}
    \end{subfigure}
    \hfill
    \begin{subfigure}[b]{0.49\textwidth}
        \centering
        \includegraphics[width=\textwidth]{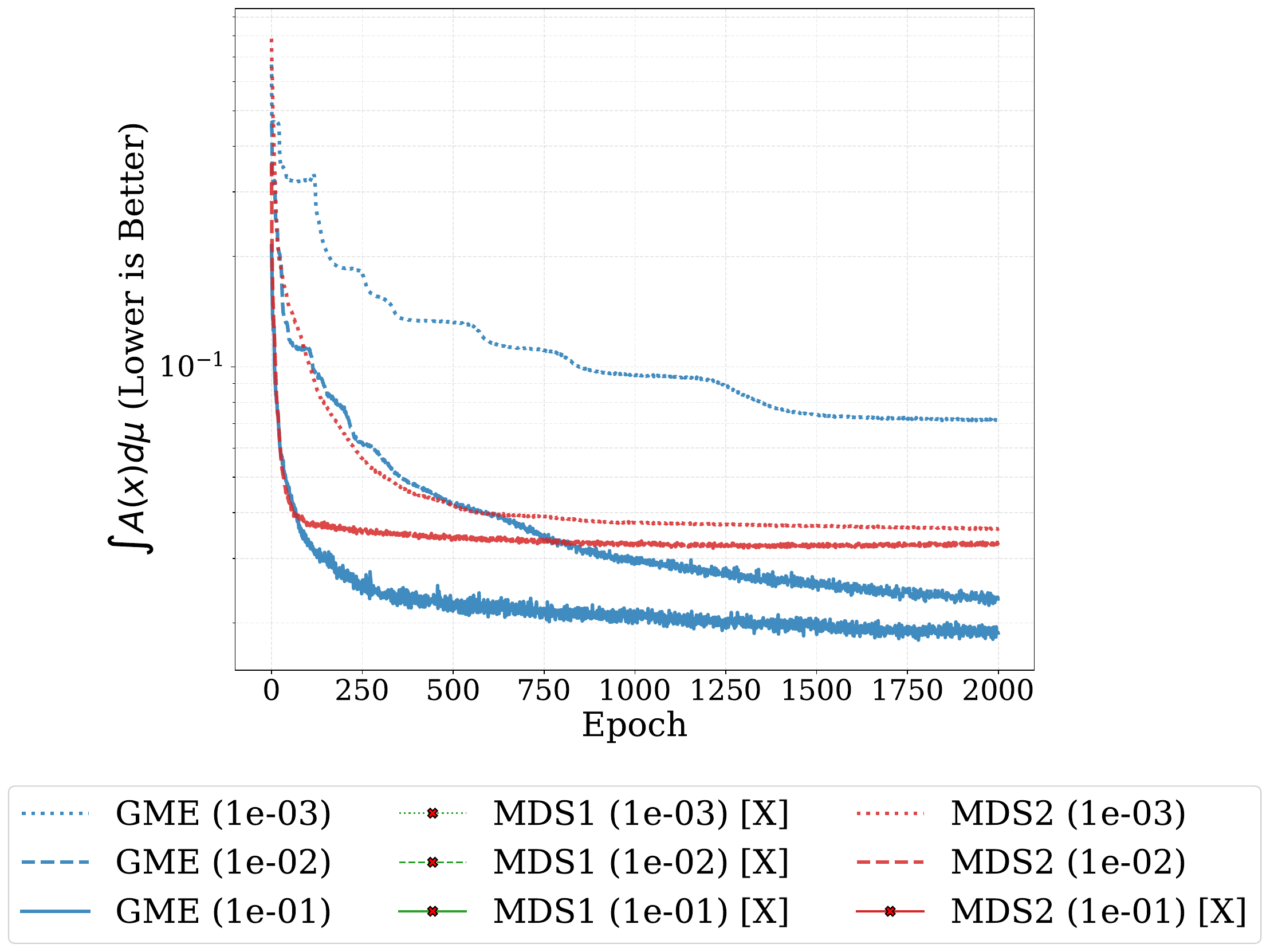}
        \caption{Local Distortion ($\int A(x) d\mu$)}
        \label{fig:local_iso}
    \end{subfigure}
    \caption{Comparison of geometric distortion metrics across training epochs for CIFAR-10 embeddings using a 4-layer encoder with Softplus activations ($d=25$). The models were evaluated across three learning rates ($1.2\times 10^{-3}$, $1.2\times 10^{-2}$, and $1.2\times 10^{-1}$). \textbf{Left:} The 95th percentile $\alpha$ value, representing the global distortion bound. \textbf{Right:} The expected local distortion across all stable configurations. GME demonstrates superior performance in both global and local isometry compared to MDS2. Furthermore, while GME remains highly stable, MDS2 suffers from gradient explosion at the highest learning rate (indicated by [X]). Note that MDS1 suffered from immediate gradient explosion across all configurations due to its unconstrained Hessian.}
    \label{fig:isometry_comparison}
\end{figure}

\begin{figure}[htbp]
    \centering
    \begin{subfigure}[b]{0.5\textwidth}
        \centering
        \includegraphics[width=\textwidth]{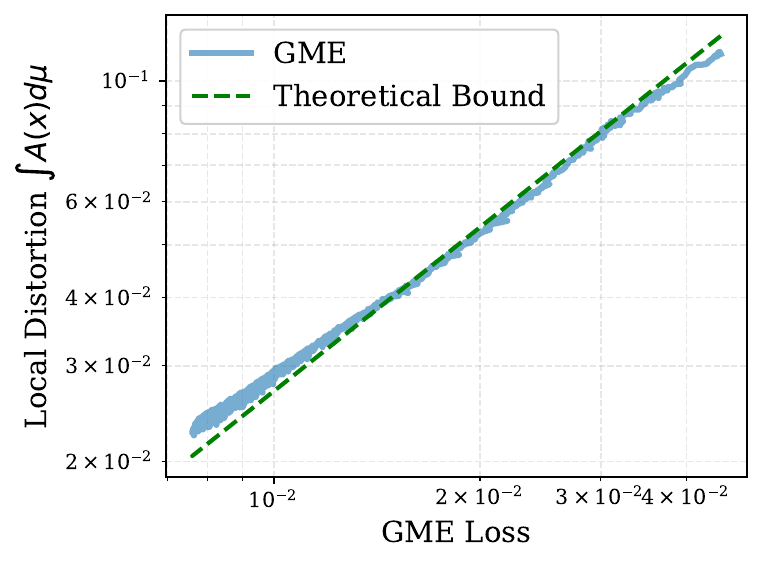}
    \end{subfigure}
    \caption{Empirical validation of the local distortion bound (lr = 0.012). The log-log plot of expected local distortion versus GME loss shows a slope of approximately 1. This matches the $C^\infty$ theoretical exponent limit of $\lim_{k\to\infty} \frac{2k-4}{m+2k} = 1$, directly validating \Cref{thm:local-distortion-bound}.}
    \label{fig:validation}
\end{figure}

\subsection{Optimization efficiency with GPE encoders}

Throughout the experiments, we implement the algorithm on real-world datasets. Specifically, we use the MNIST~\cite{deng2012mnist}, CIFAR10~\cite{cifar10}, CelebA~\cite{liu2015faceattributes}, and CelebA-HQ ($256 \times 256$)~\cite{celebahq} datasets to demonstrate the efficiency of the GPE framework compared to the VAE framework in terms of reconstruction and generation tasks.

In this experiment, we demonstrate the efficiency of the optimization process for computing the decoder while considering the geometry-preserving property of the encoder, as established in Theorem~\ref{thm:S-2}. Specifically, we illustrate that the computation of the decoder becomes more efficient and faster as the encoder's bi-Lipschitz constant $\alpha$ approaches 1. For this experiment, we utilize the CIFAR10 dataset.

We employ three distinct encoders, each computed using Algorithm~\ref{alg:T} and the GME cost, with training stopped at three difference $\TOL$ values as indicated in \Cref{fig:s_loss}. We use the same neural network architectures for encoders and the same learning rate ${lr}=10^{-4}$ and batch size $100$.

Using each pretrained encoder with a different value of $\TOL$, we use \Cref{alg:S} to minimize the cost in \eqref{eq:L-recons}. Figure~\ref{fig:s_loss} displays the loss plots for the first $5,000$ iterations. We use the same neural network architectures for decoders and the same learning rate ${lr}=10^{-4}$ and batch size $100$. The figure clearly illustrates the relationship between optimization efficiency and the level of geometry preservation exhibited by the encoder.

\begin{figure}[!ht]
    \centering
    \begin{subfigure}[t]{0.45\textwidth}
        \centering
        \includegraphics[width=\textwidth]{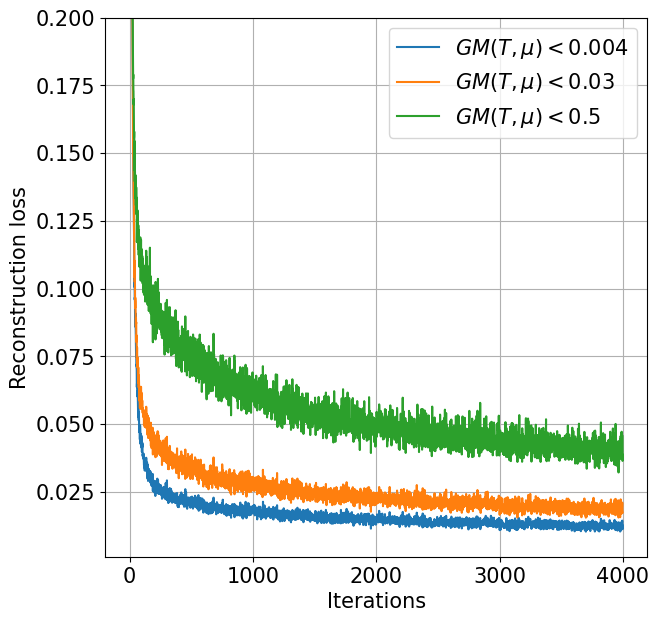}
        \caption{MNIST}
    \end{subfigure}
    \begin{subfigure}[t]{0.45\textwidth}
        \centering
        \includegraphics[width=\textwidth]{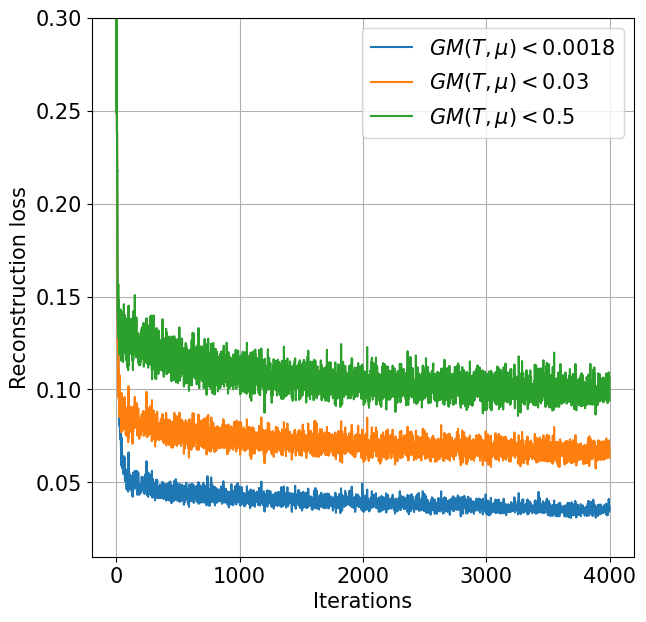}
        \caption{CIFAR10}
    \end{subfigure}

    \begin{subfigure}[t]{0.45\textwidth}
        \centering
        \includegraphics[width=\textwidth]{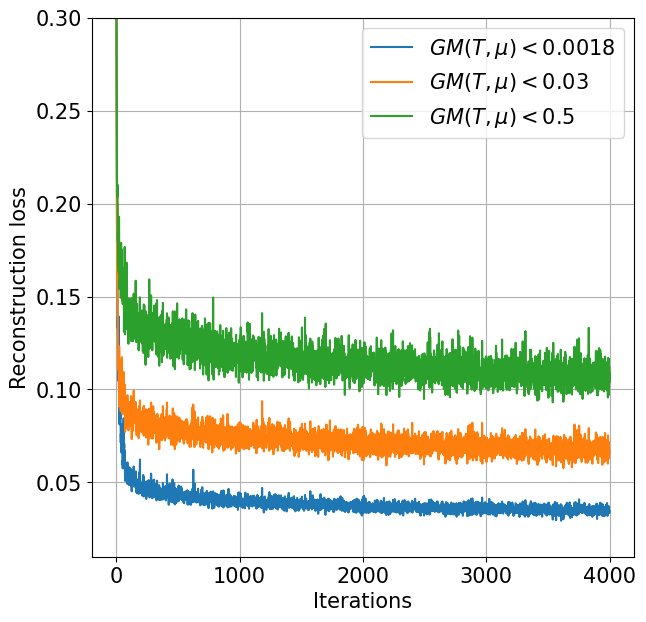}
        \caption{CelebA}
    \end{subfigure}
    \begin{subfigure}[t]{0.45\textwidth}
        \centering
        \includegraphics[width=\textwidth]{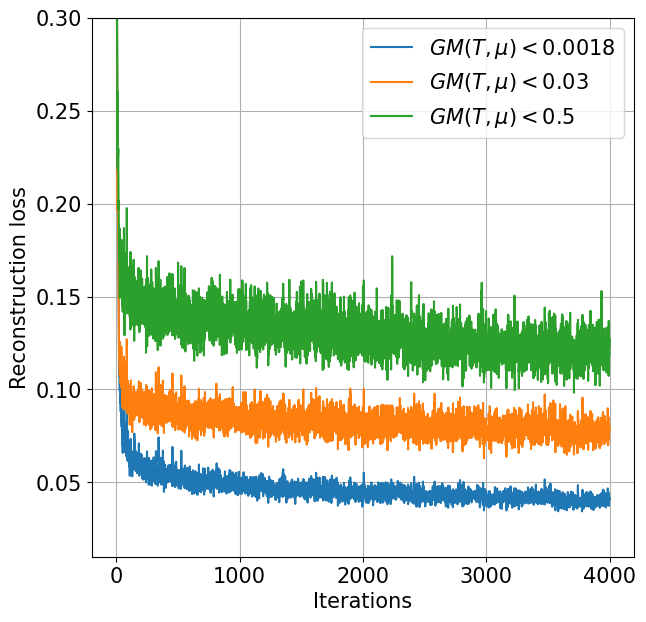}
        \caption{CelebA-HQ}
    \end{subfigure}
    \caption{The plots illustrate the results of minimizing the reconstruction loss in \eqref{eq:L-recons} on four datasets, MNIST, CIFAR10, CelebA, and CelebA-HQ using three different encoders, each characterized by a different tolerance values showing the encoder with a smaller GME loss converges faster.}
    \label{fig:s_loss}
\end{figure}

\subsection{Comparison of reconstruction task efficiency between GPE and VAE Frameworks}

In this experiment, we compare the efficiency of reconstruction tasks by training the encoders and decoders from both the VAE and GPE frameworks, using the same neural network architectures for both. To evaluate performance, we first train the encoders from both frameworks. Using the pre-trained encoders, we then separately train the decoders using only the reconstruction cost and assess the efficiency and performance of the training optimization results.

Note that, unlike GPE, where the optimization process involves the encoder independently of the decoder, VAE requires training both the encoder and decoder simultaneously using the ELBO loss. Therefore, to obtain the pre-trained encoders for VAE, we first train both the encoder and decoder. Afterward, we freeze the VAE encoder, ignore the trained decoder, and train a new decoder from scratch. This allows us to compare the optimization process of minimizing solely the reconstruction loss for decoders from both frameworks.

The results, shown in \Cref{tab:pretrained_encoders}, indicate the iterations required for each method to reach the specified loss value. For GPE, the third column includes the value of $\TOL$ used in \Cref{alg:T}. All parameters, including learning rates and neural network architectures for both the encoder and decoder, are identical across datasets. The results for GPE were obtained using two A40 GPUs, while two A100 GPUs were used for VAE.

The numerical results indicate a reduction in the number of training iterations required by GPE compared to VAE across all datasets. Specifically, some datasets exhibit a substantial improvement, with iteration reductions ranging from 3 times (CelebA) to 900 times (MNIST) with the given loss thresholds. Notably, the discrepancy in the number of iterations becomes more pronounced as the loss threshold is decreased. For instance, in the case of CelebA, GPE converges to a loss value of 0.005 within 225,000 iterations, whereas VAE fails to converge to this loss even after 700,000 iterations.

However, the performance on the CelebA-HQ dataset is similar for both VAE and GPE. This can be explained by the fact that, although CelebA-HQ resides in a higher-dimensional space ($\mathbb{R}^{256 \times 256 \times 3}$), it only consists of 30,000 images, which is much smaller in size compared to CelebA, which contains over 200,000 images. We believe this smaller dataset size allows for easier convergence of the decoder to a lower reconstruction loss.

The visualization of the convergence of the optimization process for minimizing the reconstruction cost is shown in \Cref{fig:mnist-training-s}, and the results of the reconstructed images from GPE and VAE are displayed in \Cref{fig:combined}. \Cref{fig:combined_single} highlights the notable difference in reconstruction performance between VAE and GPE, clearly showing better performance from GPE.

\begin{table*}[h!]
\centering
\begin{tabular}{|c|c|c||r|c||r|r|}
\hline
 {\begin{tabular}[c]{@{}c@{}}{Pretrained}\\ { Encoder}\end{tabular}}  
&   {\begin{tabular}[c]{@{}c@{}}{Latent}\\ {Dim.}\end{tabular}} 
&   {\begin{tabular}[c]{@{}c@{}}{Compression}\\ {Rate}\end{tabular}} 
&   {\begin{tabular}[c]{@{}c@{}}{Encoder}\\ {iterations }\end{tabular}} 
&   {$\TOL$}  
&   {\begin{tabular}[c]{@{}c@{}}{Decoder}\\ {iterations}\end{tabular}}   
&   {\begin{tabular}[c]{@{}c@{}}{Decoder}\\ {iterations}\end{tabular}}   \\  \hline\hline
 \multicolumn{5}{|c||}{{MNIST}}
 & \multicolumn{1}{c||}{{Loss: 0.1}}
 & \multicolumn{1}{c|}{{Loss: 0.022}}
 \\ \hline\hline
VAE & 30 & 34.13 & 2,700,000 & N/A  & 80    &  280,899    \\ \hline
GPE & 30 & 34.13 & 3,800 & 0.03  & 47  & 1,044 \\ \hline
GPE & 30 & 34.13 & 16,000 & 0.004  & 47  & 308 \\ \hline \hline
\multicolumn{5}{|c||}{{CIFAR10}}
& \multicolumn{1}{c||}{{Loss: 0.1}}
 & \multicolumn{1}{c|}{{Loss: 0.015}}
 \\ \hline\hline
VAE & 100 & 30.72 & 1,250,000 & N/A   & 37  & 159,036 \\ \hline
GPE & 100 & 30.72 & 2,700 & 0.03  & 19  & 64,281 \\ \hline
GPE & 100 & 30.72 & 23,700  & 0.0009  & 19  &  11,071\\ \hline\hline 
\multicolumn{5}{|c||}{{CelebA}}
& \multicolumn{1}{c||}{{Loss: 0.1}}
 & \multicolumn{1}{c|}{{Loss: 0.015}}
 \\ \hline\hline
VAE & 100 & 122.88 & 1,215,000  & N/A  & 28  & 120,497\\ \hline
GPE & 100 & 122.88 & 66,000 & 0.0007  & 20  & 40,023 \\ \hline\hline 
\multicolumn{5}{|c||}{{CelebA-HQ}}
& \multicolumn{1}{c||}{{Loss: 0.1}}
 & \multicolumn{1}{c|}{{Loss: 0.0045 }}
 \\ \hline\hline
VAE & 100 & 1966.08 & 180,000 & N/A &  23  & 105,178\\ \hline
GPE & 100 & 1966.08 & 141,100 & 0.0007  & 21  & 104,129\\ \hline 
\end{tabular}
\caption{Comparison of training decoders from pretrained encoders on various datasets. The table includes the latent dimension and the compression rate for each setup, calculated as the ratio of the ambient dimension to the latent dimension (ambient dimensions: $32 \times 32$ for MNIST, $32 \times 32 \times 3$ for CIFAR10, $64 \times 64 \times 3$ for CelebA, and $256 \times 256 \times 3$ for CelebA-HQ).}
\label{tab:pretrained_encoders}
\end{table*}

\begin{figure}[h!]
    \centering
    \begin{subfigure}[b]{0.16\textwidth}
        \centering
        \includegraphics[width=\textwidth]{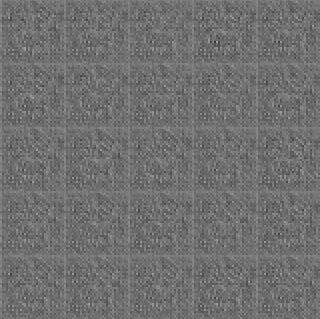}
        \caption*{It. 0 (0 s)}
    \end{subfigure}
    \hfill
    \begin{subfigure}[b]{0.16\textwidth}
        \centering
        \includegraphics[width=\textwidth]{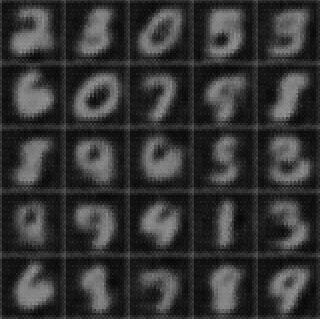}
        \caption*{It. 100 (4 s)}
    \end{subfigure}
    \hfill
    \begin{subfigure}[b]{0.16\textwidth}
        \centering
        \includegraphics[width=\textwidth]{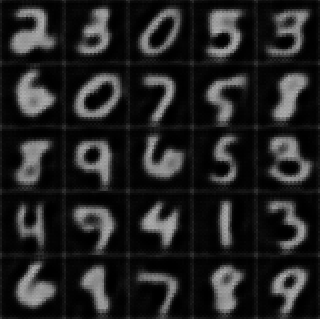}
        \caption*{It. 200 (8 s)}
    \end{subfigure}
    \hfill
    \begin{subfigure}[b]{0.16\textwidth}
        \centering
        \includegraphics[width=\textwidth]{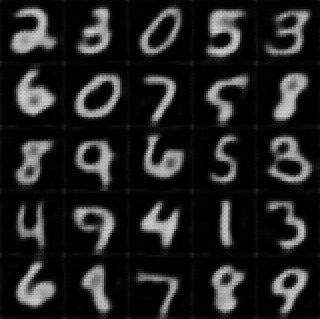}
        \caption*{It. 300 (12 s)}
    \end{subfigure}
    \hfill
    \begin{subfigure}[b]{0.16\textwidth}
        \centering
        \includegraphics[width=\textwidth]{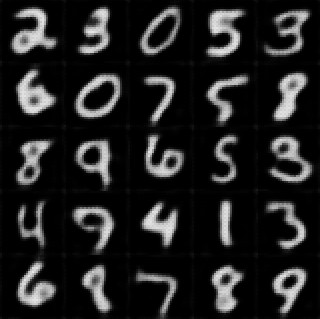}
        \caption*{It. 400 (16 s)}
    \end{subfigure}
    \hfill
    \begin{subfigure}[b]{0.16\textwidth}
        \centering
        \includegraphics[width=\textwidth]{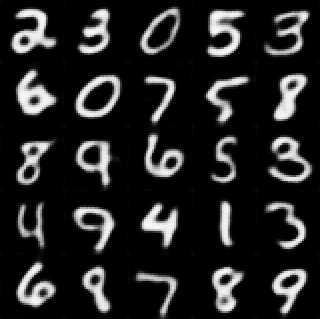}
        \caption*{It. 600 (24 s)}
    \end{subfigure}

   \begin{subfigure}[b]{0.19\textwidth}
        \centering
        \includegraphics[width=\textwidth]{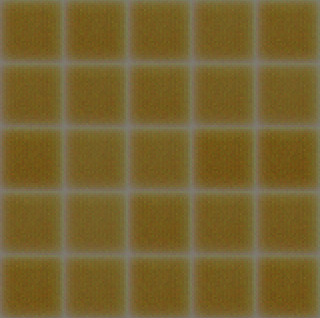}
        \caption*{It. 0 (0 m)}
    \end{subfigure}
    \hfill
    \begin{subfigure}[b]{0.19\textwidth}
        \centering
        \includegraphics[width=\textwidth]{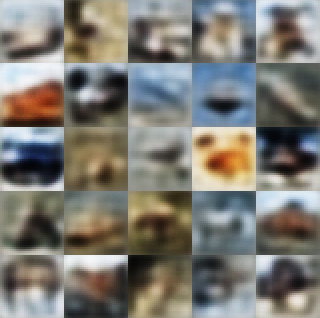}
        \caption*{It. 100 (0.15 m)}
    \end{subfigure}
    \hfill
    \begin{subfigure}[b]{0.19\textwidth}
        \centering
        \includegraphics[width=\textwidth]{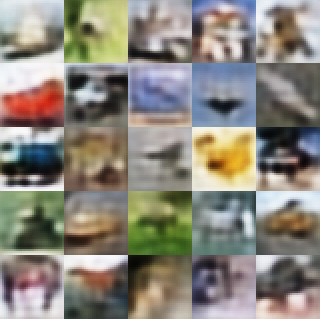}
        \caption*{It. 500 (0.75 m)}
    \end{subfigure}
    \hfill
    \begin{subfigure}[b]{0.19\textwidth}
        \centering
        \includegraphics[width=\textwidth]{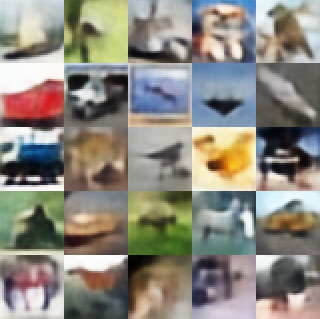}
        \caption*{It. 5,000 (7.5 m)} 
    \end{subfigure}
    \hfill
    \begin{subfigure}[b]{0.19\textwidth}
        \centering
        \includegraphics[width=\textwidth]{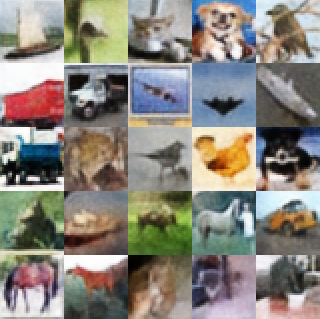}
        \caption*{It. 10,000 (15 m)}
    \end{subfigure} 
    
   \begin{subfigure}[b]{0.19\textwidth}
        \centering
        \includegraphics[width=\textwidth]{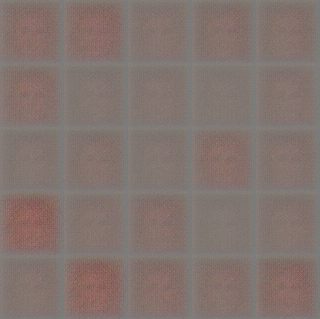}
        \caption*{It. 0 (0 s)}
    \end{subfigure}
    \hfill
    \centering
        \begin{subfigure}[b]{0.19\textwidth}
        \includegraphics[width=\textwidth]{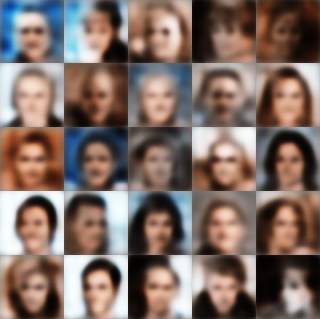}
        \caption*{It. 100 (0.2 m)}
    \end{subfigure}
    \hfill
    \begin{subfigure}[b]{0.19\textwidth}
        \centering
        \includegraphics[width=\textwidth]{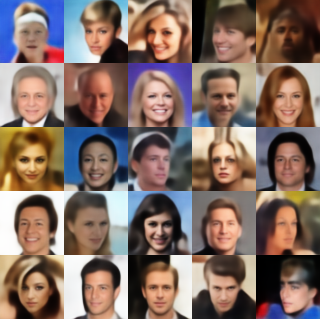}
        \caption*{It. 20,000 (40 m)} 
    \end{subfigure}
    \hfill
    \begin{subfigure}[b]{0.19\textwidth}
        \centering
        \includegraphics[width=\textwidth]{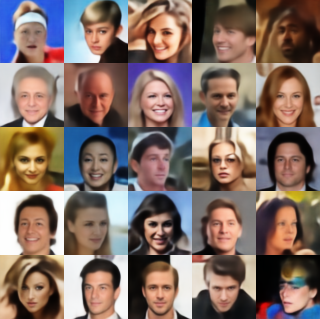}
        \caption*{It. 60,000 (2 hrs)} 
    \end{subfigure}
    \hfill
    \begin{subfigure}[b]{0.19\textwidth}
        \centering
        \includegraphics[width=\textwidth]{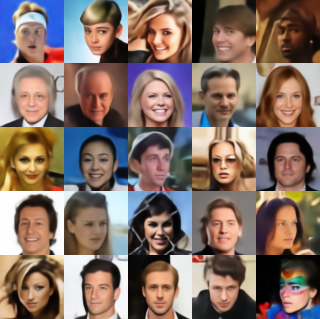}
        \caption*{It. 140,000 (8 hrs)} 
    \end{subfigure}

   \begin{subfigure}[b]{0.19\textwidth}
        \centering
        \includegraphics[width=\textwidth]{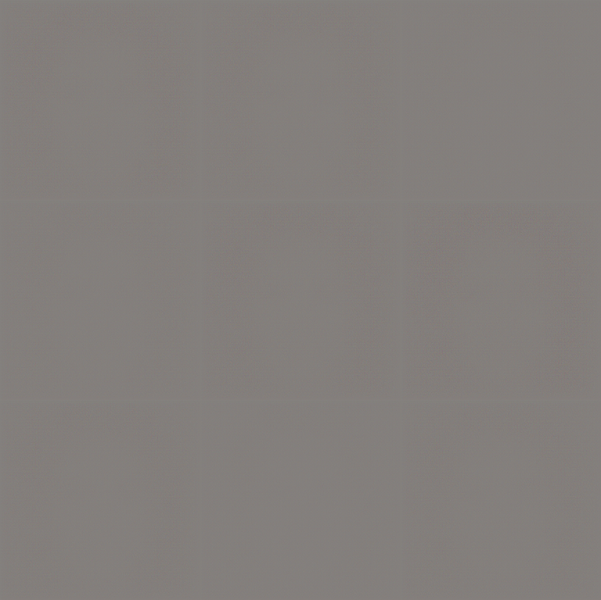}
        \caption*{It. 0 (0 s)}
    \end{subfigure}
    \hfill
    \centering
        \begin{subfigure}[b]{0.19\textwidth}
        \includegraphics[width=\textwidth]{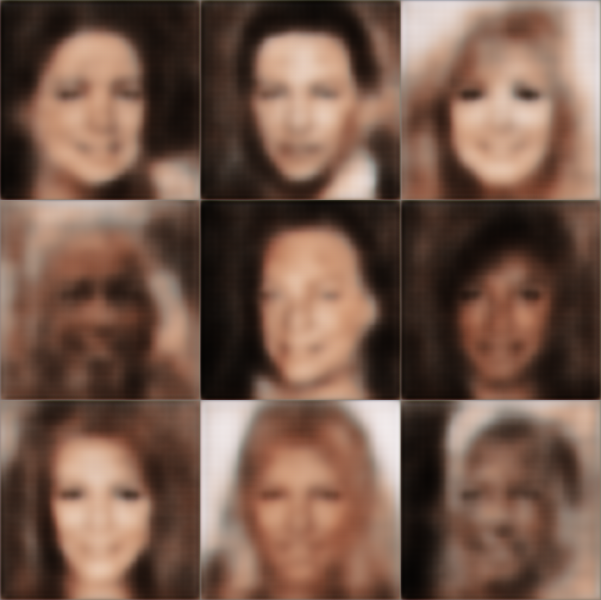}
        \caption*{It. 100 (0.8 m)}
    \end{subfigure}
    \hfill
    \begin{subfigure}[b]{0.19\textwidth}
        \centering
        \includegraphics[width=\textwidth]{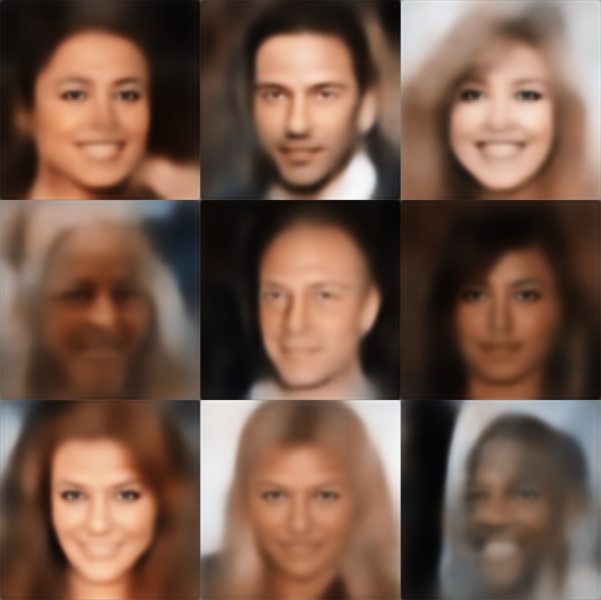}
        \caption*{It. 1,000 (7 m)} 
    \end{subfigure}
    \hfill
    \begin{subfigure}[b]{0.19\textwidth}
        \centering
        \includegraphics[width=\textwidth]{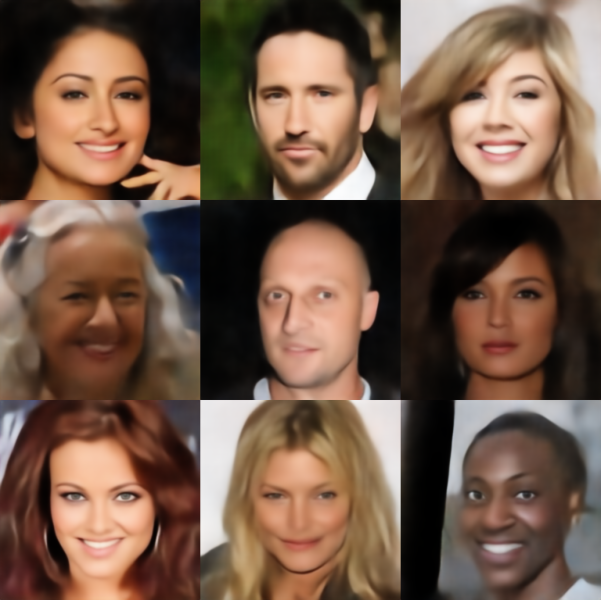}
        \caption*{It. 30,000 (3.4 hrs)} 
    \end{subfigure}
    \hfill
    \begin{subfigure}[b]{0.19\textwidth}
        \centering
        \includegraphics[width=\textwidth]{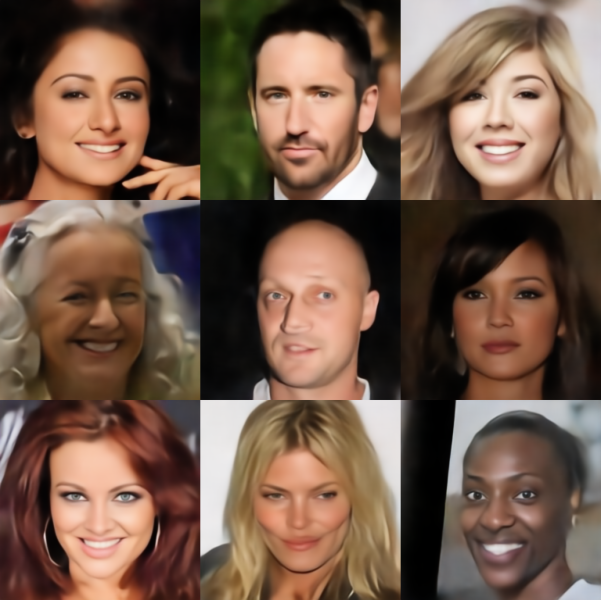}
        \caption*{It. 70,000 (7.8 hrs)} 
    \end{subfigure} 
    \caption{Convergence of training a decoder $G$ using a geometry-preserving encoder with various datasets: first row (MNIST), second row (CIFAR10), third row (CelebA), and fourth row (CelebA-HQ). Each figure caption shows the iteration number and the time taken in seconds (s) and minutes (m). Different neural network architectures for the decoder were used for each dataset. All experiments were done with the same GPU settings: 2 A40 GPUs.}
    \label{fig:mnist-training-s}
\end{figure}

\begin{figure}[h!]
    \centering
     \begin{subfigure}[b]{0.26\textwidth}
         \centering
         \includegraphics[width=\textwidth]{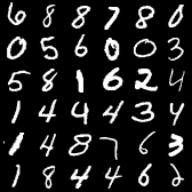}
         \caption{Original}
     \end{subfigure}
     \begin{subfigure}[b]{0.26\textwidth}
         \centering
         \includegraphics[width=\textwidth]{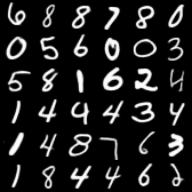}
         \caption{VAE}
     \end{subfigure}
     \begin{subfigure}[b]{0.26\textwidth}
         \centering
         \includegraphics[width=\textwidth]{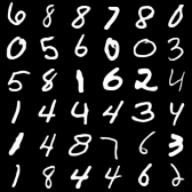}
         \caption{GPE}
     \end{subfigure}\\
    \begin{subfigure}[b]{0.26\textwidth}
        \centering
        \includegraphics[width=\textwidth]{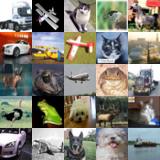}
        \caption{Original}
    \end{subfigure}
    \begin{subfigure}[b]{0.26\textwidth}
        \centering
        \includegraphics[width=\textwidth]{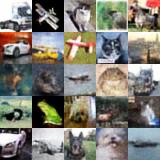}
        \caption{VAE}
    \end{subfigure}
    \begin{subfigure}[b]{0.26\textwidth}
        \centering
        \includegraphics[width=\textwidth]{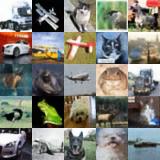}
        \caption{GPE}
    \end{subfigure}

    \begin{subfigure}[b]{0.26\textwidth}
        \centering
        \includegraphics[width=\textwidth]{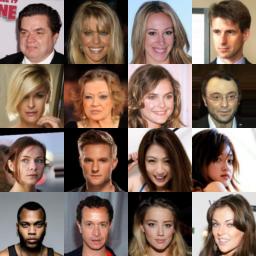}
        \caption{Original}
    \end{subfigure}
    \begin{subfigure}[b]{0.26\textwidth}
        \centering
        \includegraphics[width=\textwidth]{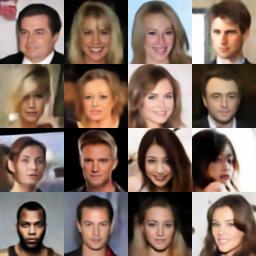}
        \caption{VAE}
    \end{subfigure}
    \begin{subfigure}[b]{0.26\textwidth}
        \centering
        \includegraphics[width=\textwidth]{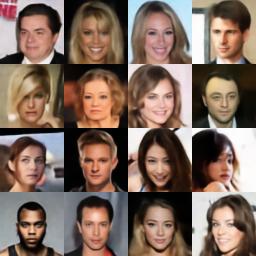}
        \caption{GPE}
    \end{subfigure}
    
    \begin{subfigure}[b]{0.26\textwidth}
        \centering
        \includegraphics[width=\textwidth, clip = true, trim = 0 555 0 0]{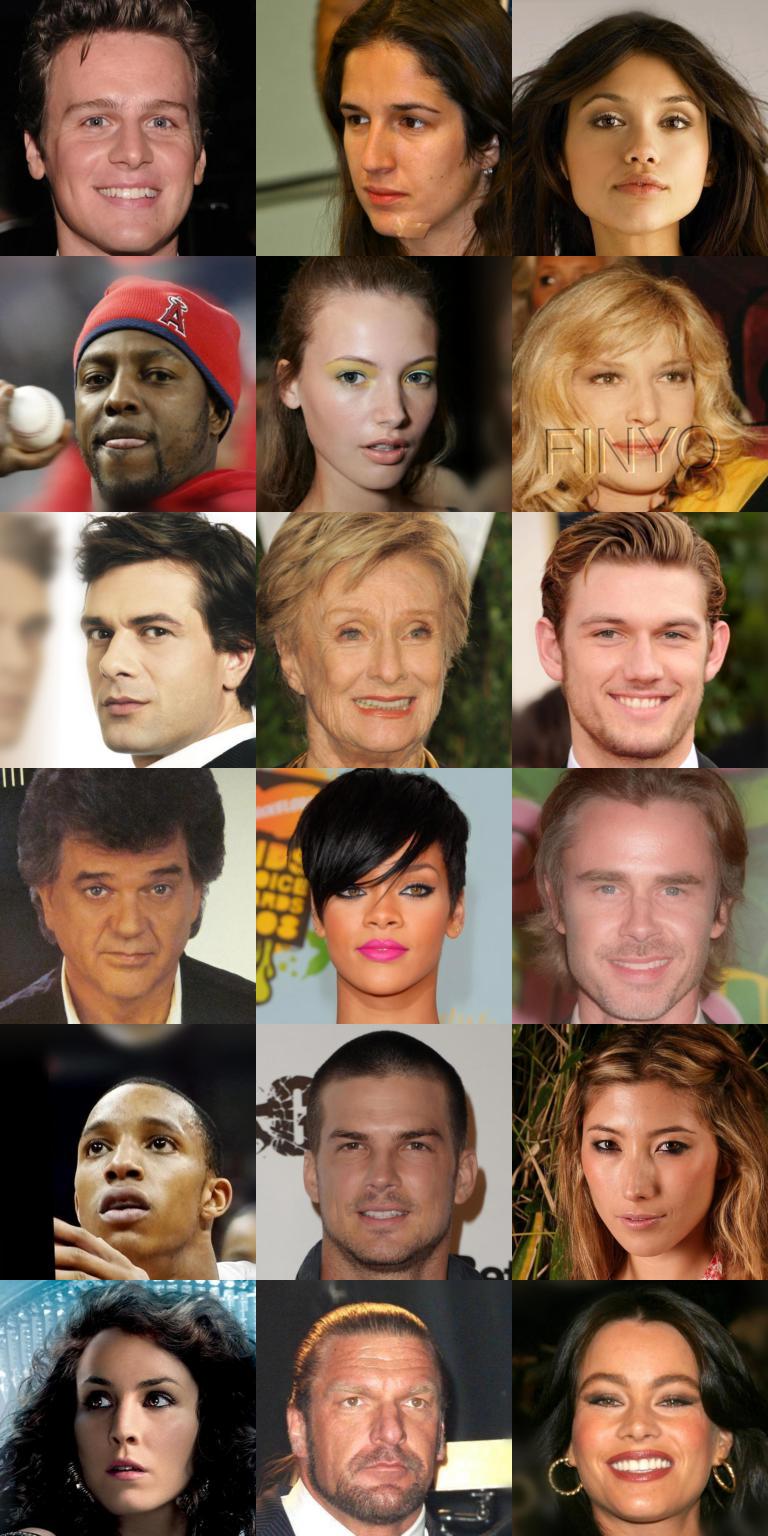}
        \caption{Original}
    \end{subfigure}
    \begin{subfigure}[b]{0.26\textwidth}
        \centering
        \includegraphics[width=\textwidth, clip = true, trim = 0 555 0 0]{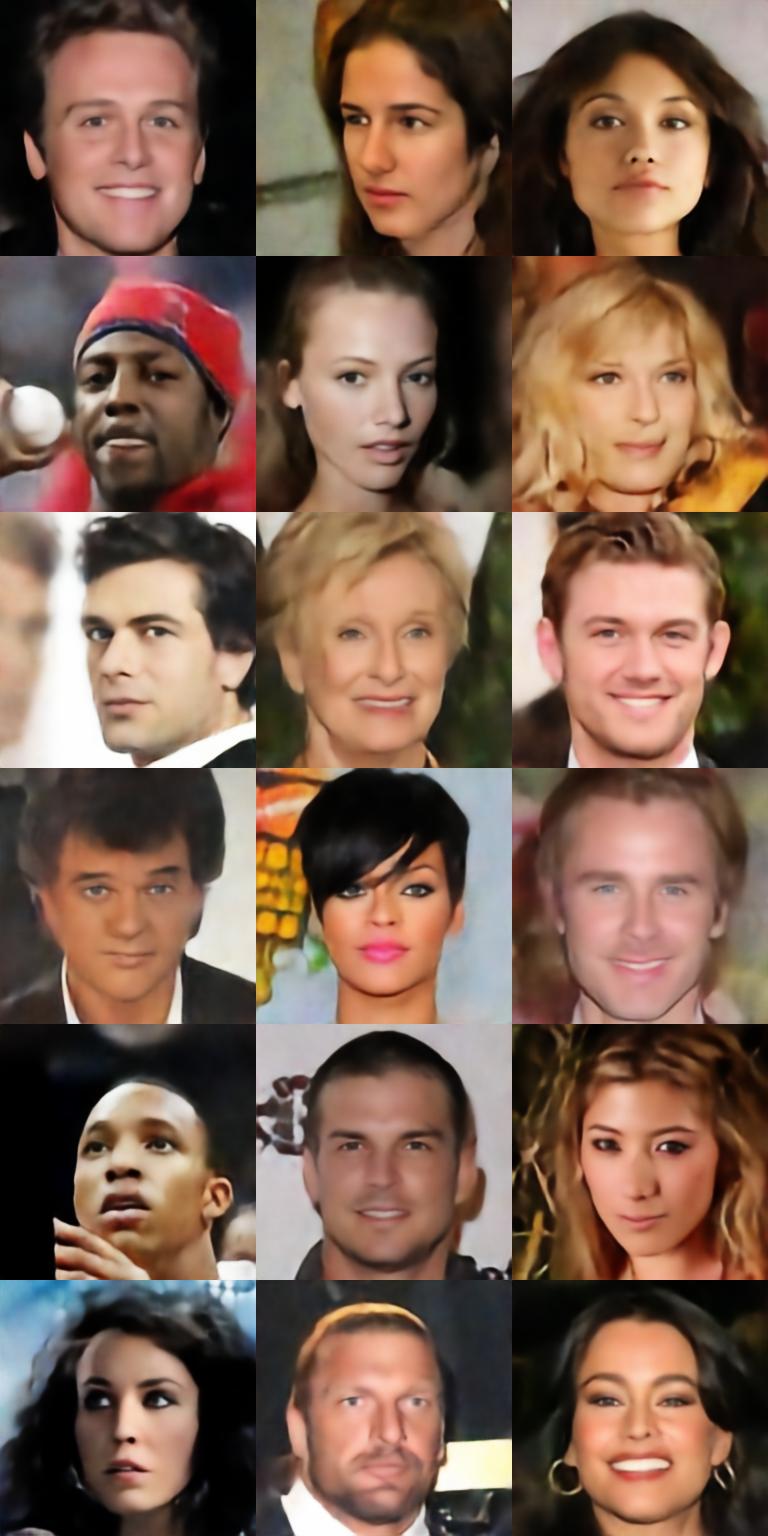}
        \caption{VAE}
    \end{subfigure}
    \begin{subfigure}[b]{0.26\textwidth}
        \centering
        \includegraphics[width=\textwidth, clip = true, trim = 0 555 0 0]{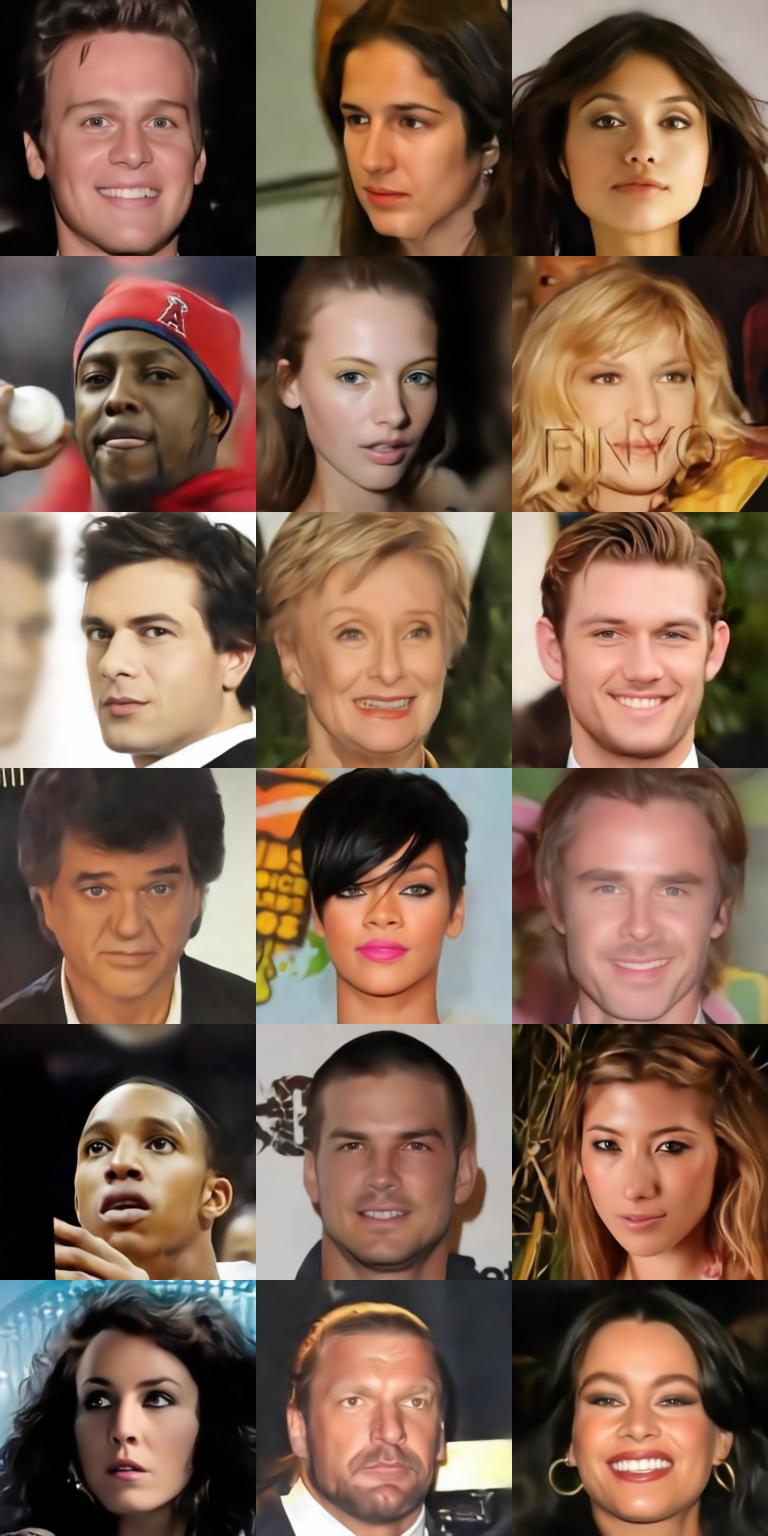}
        \caption{GPE}
    \end{subfigure}

    \caption{The figure shows the original images (left column), the corresponding reconstructed images from the VAE (center column), and those from the GPE (right column). The datasets used are MNIST (a) - (c), CIFAR10 (d) - (f), CelebA (g) - (i), and CelebA-HQ (256$\times$256) (j) - (l).}
    \label{fig:combined}
\end{figure}

\begin{figure}[htbp]
    \centering
    \begin{minipage}{0.24\linewidth}
        \centering
        \includegraphics[width=\linewidth]{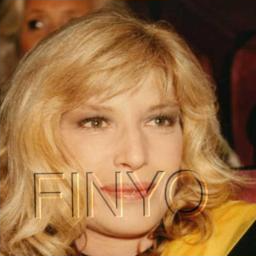}
        \caption*{Original Image}
    \end{minipage}%
    \hspace{0.1cm}
    \begin{minipage}{0.24\linewidth}
        \centering
        \includegraphics[width=\linewidth]{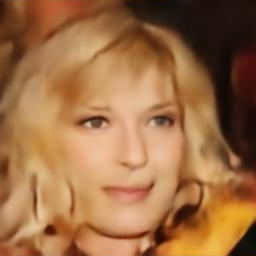}
        \caption*{VAE}
    \end{minipage}%
    \hspace{0.1cm}
    \begin{minipage}{0.24\linewidth}
        \centering
        \includegraphics[width=\linewidth]{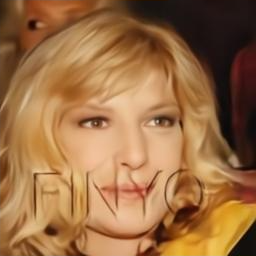}
        \caption*{GPE}
    \end{minipage}
    \caption{Examples of reconstructed images from CelebA-HQ, as shown in \Cref{fig:combined}, highlighting the clear distinction between VAE and GPE reconstructed images, presented alongside the original image.}
    \label{fig:combined_single}
\end{figure}

\subsection{Convergence rates of reconstruction loss: GPE vs.\ VAE-based methods}
The preceding experiment examined convergence of neural network training that minimizes only the reconstruction loss, using pretrained encoders from GPE and VAE. However, this setup does not provide a fair comparison of training time and efficiency between GPE and VAE, because the two frameworks differ in their training protocols: VAE trains the encoder and decoder simultaneously, whereas GPE trains them sequentially, first the encoder and then the decoder. Consequently, convergence results from decoder-only training cannot reflect the actual end-to-end training time for these two different methods.

In this experiment, we therefore compare the methods under their intended training regimes: simultaneous encoder--decoder training for VAE-based methods and sequential encoder--then--decoder training for GPE. We report convergence of the reconstruction loss for our GPE encoder and decoder against four baselines: VAE, VAE with multidimensional scaling (MDS) regularization \cite{yu2013embedding}, a plain autoencoder (AE, without the KL term), and AE with MDS regularization.
The MDS penalty is
\[
    L_{\mathrm{MDS}}(T)
    = \mathbb{E}_{x, x' \sim \mu} \left[\bigl(\|x - x'\| - \|T(x) - T(x')\|\bigr)^2\right].
\]
For VAE+MDS and AE+MDS, the MDS coefficient is set to $1$, chosen empirically to ensure stable, concurrent decay of the reconstruction term, the MDS term, and, when applicable, the KL term.

For the VAE we use a $\beta$-VAE \cite{higgins2017beta} with $\beta=0.1$. This value was selected empirically: larger $\beta$ slows the decay of the reconstruction loss, while very small $\beta$ prevents the Kullback-Leibler term from optimizing properly.

A key algorithmic difference is the training schedule. Our method trains the encoder and decoder separately, whereas the baselines train both components simultaneously. To ensure a fair comparison, we first train the GPE encoder until a target encoder–loss threshold is reached, and only then train the decoder. In all reconstruction–loss figures, the GPE decoder curve is shifted along the time axis by the encoder pretraining time to reflect true wall–clock usage. For VAE, AE, VAE+MDS, and AE+MDS, encoder and decoder are trained jointly with no offset.

All methods use the same encoder and decoder architectures and the batch size specified in \Cref{tab:pretrained_encoders}. Reconstruction loss is evaluated on the same held–out validation set of $5{,}000$ examples, identical across methods. Learning rates are selected from $\{10^{-3},10^{-4},10^{-5}\}$ by choosing the largest value that yields stable convergence for each method. For GPE, the tighter Hessian bound permits a larger stable step size. Concretely, the GPE encoder uses $10^{-4}$ on MNIST and $10^{-3}$ on the other datasets; the GPE decoder uses $10^{-4}$ on all datasets. The other methods use $10^{-5}$ on all datasets. The GPE encoder is trained until the thresholds in \Cref{tab:pretrained_encoders} are met, after which the GPE decoder is trained to its corresponding thresholds. The baselines are stopped when their thresholds in \Cref{tab:pretrained_encoders} are met. All experiments are performed on a single NVIDIA V100 GPU.

\Cref{fig:comparison-gpe-vae} reports reconstruction loss versus wall-clock time; the horizontal offset due to GPE encoder pretraining is indicated on the figure. \Cref{tab:comparison-gpe-vae} summarizes the seconds per iteration for each method. The per-iteration wall-clock times of VAE and VAE+MDS are nearly identical; on the CelebA dataset the difference is approximately $0.3\%$. Although the MDS regularizer has a double-summation form with cost $O(n^{2})$ in the minibatch size $n$, its additional runtime is negligible in our training setup.

Although the GPE decoder starts after an encoder pretraining phase, the plots show that GPE achieves a lower reconstruction loss sooner than the baselines. This effect is consistent across all four datasets, indicating faster convergence even with the initial offset. On CIFAR10, GPE attains its best reconstruction loss at least $15.6$ hours earlier than any of the four VAE-based methods. Across datasets, methods trained with a KL term converge more slowly, reflecting the difficulty of jointly optimizing the encoder and the decoder under both reconstruction and KL objectives. To reach comparable performance, the VAE-based methods required substantial hyperparameter tuning of the KL and MDS weights. The sequential GPE procedure reduces this burden by decoupling the encoder and decoder stages. This advantage is especially pronounced on CelebA-HQ, where a larger architecture is used: each iteration in the decoder-only GPE stage takes about $0.58$ s because it optimizes only the decoder, while the VAE-based methods take about $0.71$ s per iteration since they must update both the encoder and the decoder, resulting in longer runtimes per step.

\begin{figure}[h!]
    \centering
    \begin{subfigure}[b]{0.42\textwidth}
        \centering
        \includegraphics[width=\textwidth]{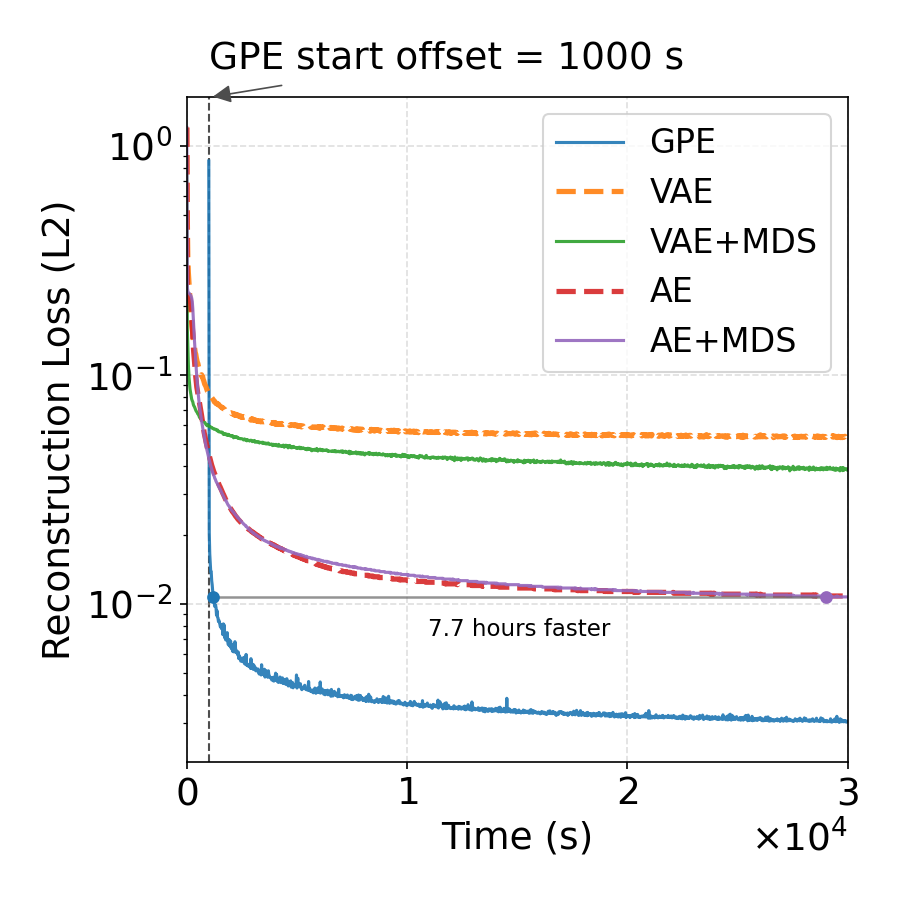}
        \caption{MNIST}
    \end{subfigure}
    \begin{subfigure}[b]{0.42\textwidth}
        \centering
        \includegraphics[width=\textwidth]{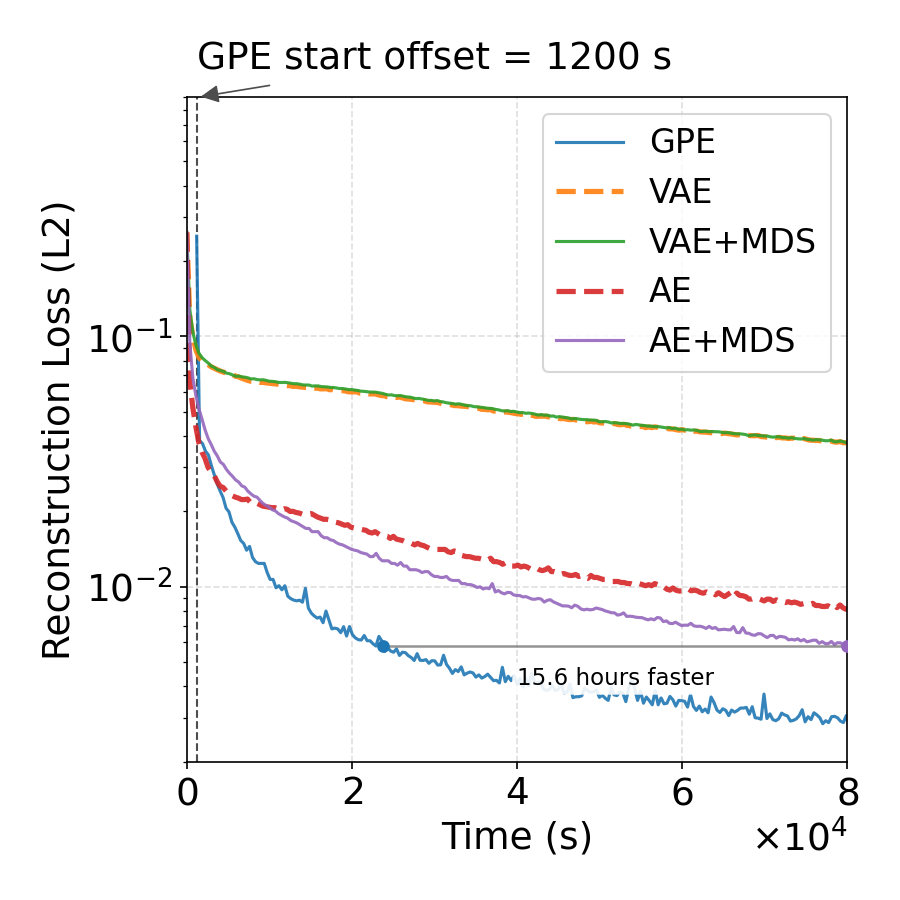}
        \caption{CIFAR10}
    \end{subfigure}
    \hfill

    \begin{subfigure}[b]{0.42\textwidth}
        \centering
        \includegraphics[width=\textwidth]{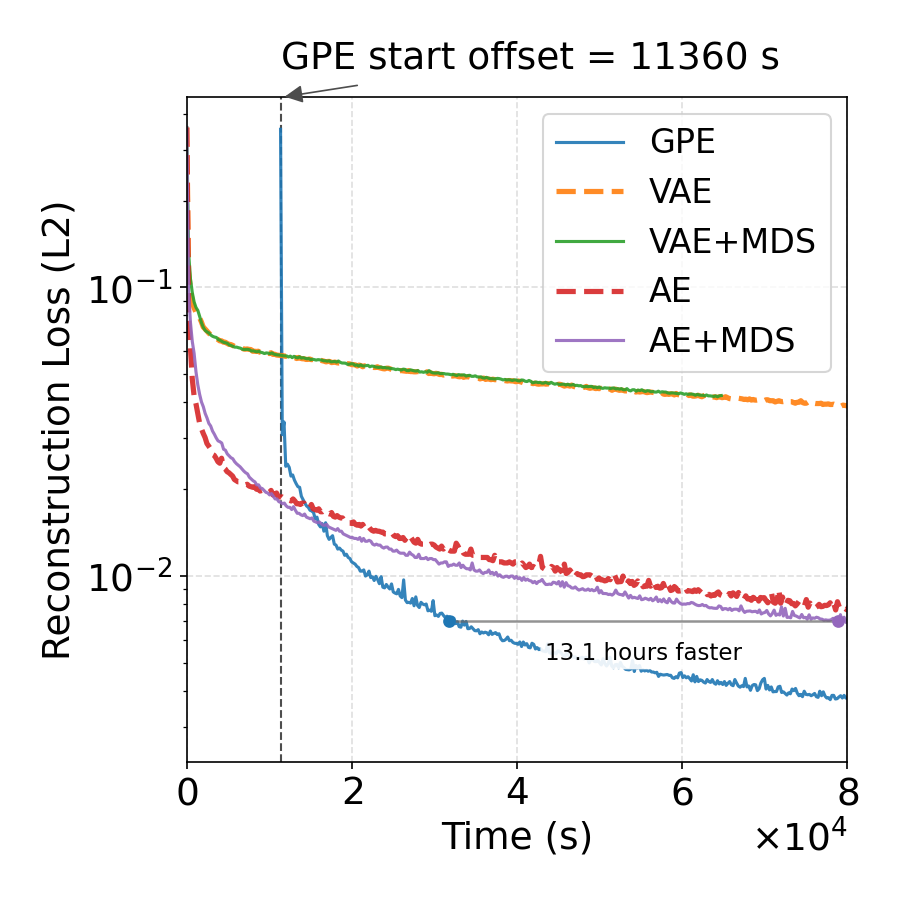}
        \caption{CelebA}
    \end{subfigure}
    \begin{subfigure}[b]{0.42\textwidth}
        \centering
        \includegraphics[width=\textwidth]{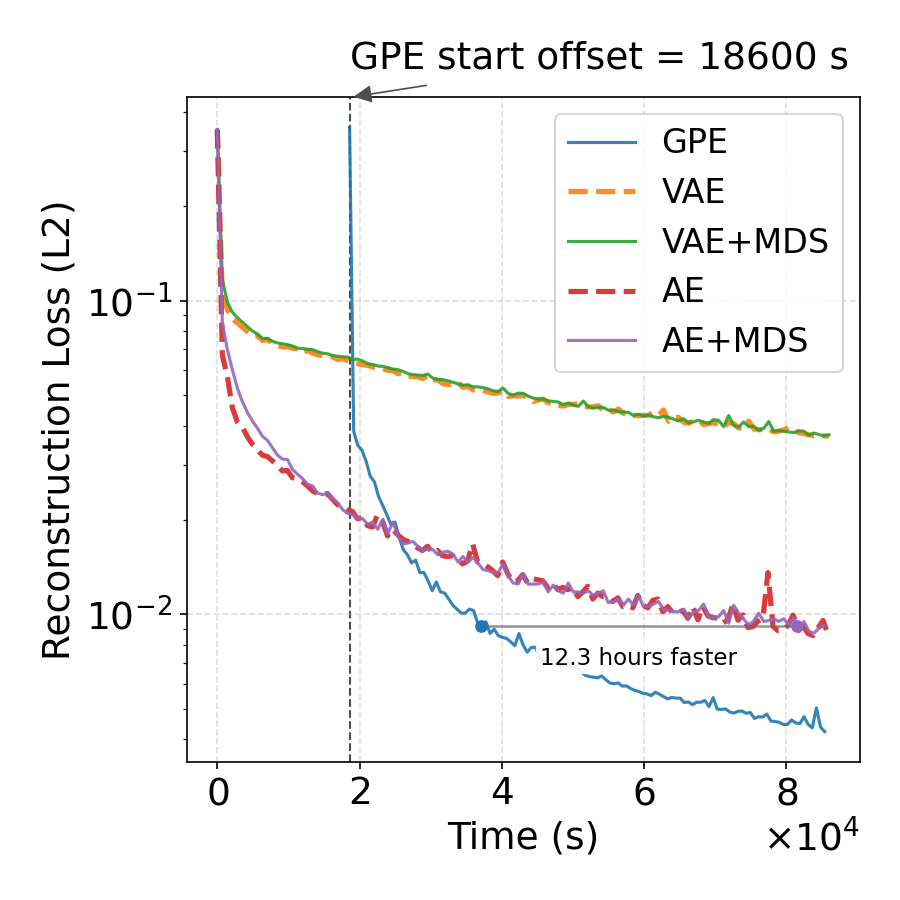}
        \caption{CelebA-HQ}
    \end{subfigure}
    \caption{Reconstruction loss versus wall-clock time during training for GPE (ours), VAE, VAE with MDS regularization, AE, and AE with MDS regularization. The computations are all performed on a single NVIDIA V100 GPU. The GPE curve is shifted on the time axis because decoder training begins only after pretraining the GPE encoder, so its iterations start later in wall-clock time. Despite this start-time offset from the pre-training of encoder, GPE still converges faster and reaches the minimum reconstruction loss achieved by the other methods sooner.
}
    \label{fig:comparison-gpe-vae}
\end{figure}

\begin{table}[h!]
\centering
\begin{tabular}{|l|c|c|c|c|}
\hline
                & MNIST   & CIFAR   & CelebA & CelebA-HQ \\
\hline
GPE Enc. Threshold & 3.5e-3      & 1.5e-3      & 5e-4  & 8e-4 \\
GPE Enc. (s/iter)  & 0.020  & 0.020  & 0.142  & 0.124 \\
GPE Dec. (s/iter)  & 0.022  & 0.306  & 0.191  & 0.588 \\
VAE (s/iter)       & 0.024  & 0.318  & 0.203  & 0.705\\
VAE+MDS (s/iter)   & 0.025  & 0.320  & 0.205  & 0.705 \\
AE (s/iter)        & 0.023  & 0.318  & 0.203  & 0.704 \\
AE+MDS (s/iter)    & 0,025  & 0.320  & 0.204  & 0.705 \\
\hline
\end{tabular}
\caption{ Wall-clock seconds per iteration for each method and dataset, measured on a single V100 GPU.}
\label{tab:comparison-gpe-vae}
\end{table}

\subsection{Comparison of latent generative task efficiency between GPE and VAE Frameworks}

In this experiment, we aim to compare the performance of latent generative tasks between the GPE and VAE frameworks.
Given pretrained encoders and decoders for each framework, we train a flow map 
$R:\mathbb{R}^d \to \mathbb{R}^d$ so that $R_{\#}\nu \approx T_{\#}\mu$, 
where $\nu$ is the latent prior and $\mu$ is the embedded data distribution induced by the encoder $T$. 
The flow map $R$ is learned with the conditional flow matching (CFM) algorithm \cite{lipman2022flow} 
using the same neural network architecture and learning rate $\mathrm{lr}=2\times 10^{-5}$ 
for both GPE and VAE. 
Generation quality is evaluated by the Fr\'echet Inception Distance (FID) \cite{heusel2017gans}, 
computed using $1000$ generated samples 
and $1000$ real samples.

Figure~\ref{fig:gpe-vae-fid} reports FID (lower is better) versus training iterations on MNIST, CIFAR10, 
and CelebA. Both methods are trained under an equal wall--clock budget of 24 hours on a single NVIDIA V100 GPU. 
Each plot also includes the FID attained by the VAE without the flow model, which represents the baseline score 
before training $R$. For GPE, we pretrain the encoder and then train the decoder, with the combined time totaling 
24 hours. For VAE, the encoder and decoder are trained jointly for 24 hours. 
Across all datasets, the GPE variant achieves lower FID at matched iteration counts and attains strong quality earlier, 
indicating faster convergence under identical compute and model settings. 
This improvement aligns with the lower reconstruction error achieved by the GPE encoder, 
consistent with Figure~\ref{fig:comparison-gpe-vae}. 
\emph{Qualitative results:} Figure~\ref{fig:generation-results} visualizes the GPE-based latent generative model on four datasets, 
showing high-quality interpolations across all cases.

\begin{figure}[t]
\centering
\begin{subfigure}[t]{0.28\linewidth}
    \centering
    \includegraphics[width=\linewidth]{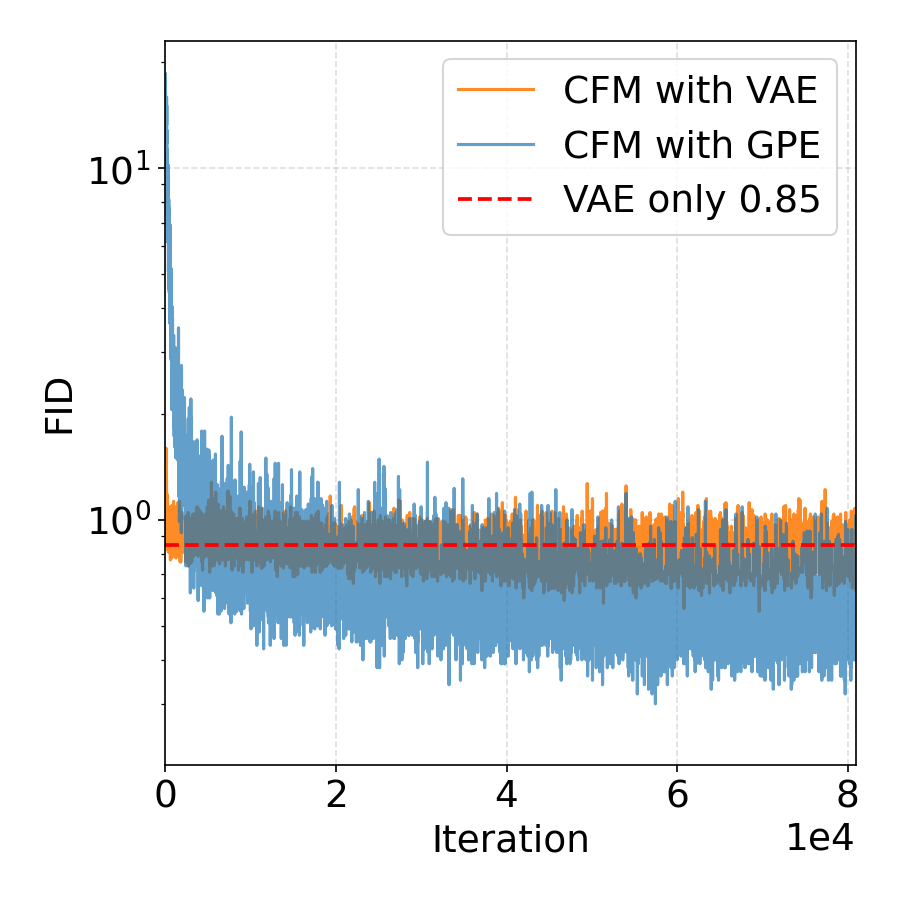}
    \caption{MNIST}
    \label{fig:gpe-vae-mnist}
\end{subfigure}\hfill
\begin{subfigure}[t]{0.28\linewidth}
    \centering
    \includegraphics[width=\linewidth]{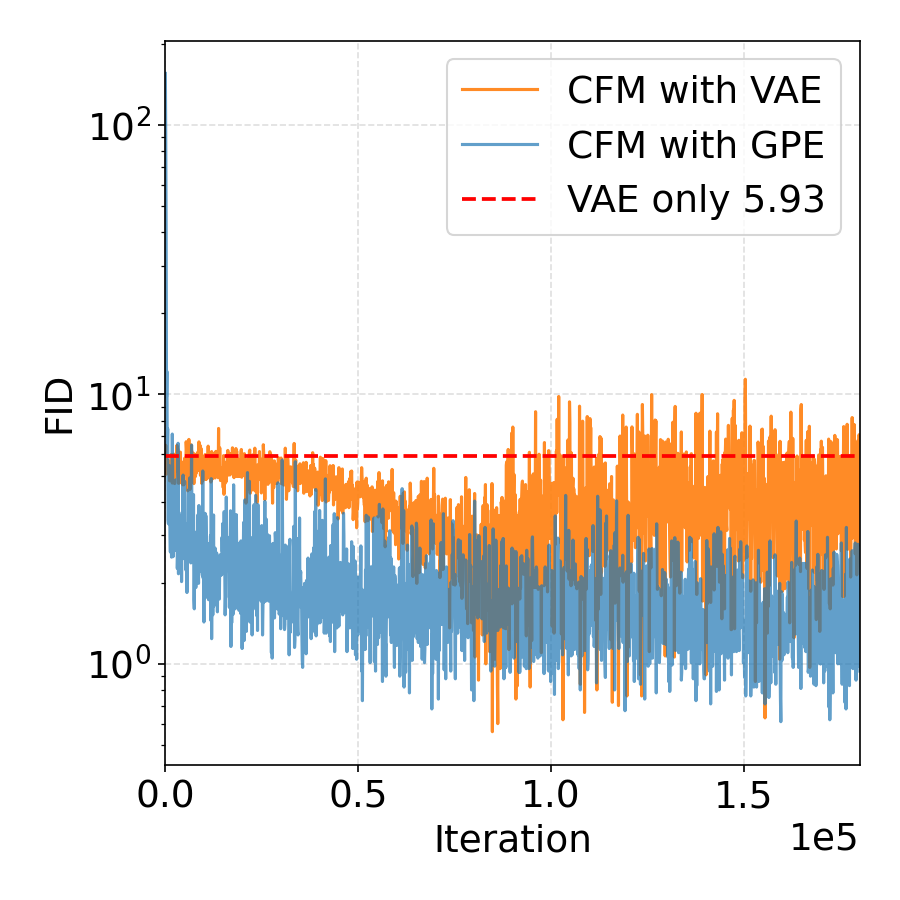}
    \caption{CIFAR-10}
    \label{fig:gpe-vae-cifar10}
\end{subfigure}\hfill
\begin{subfigure}[t]{0.28\linewidth}
    \centering
    \includegraphics[width=\linewidth]{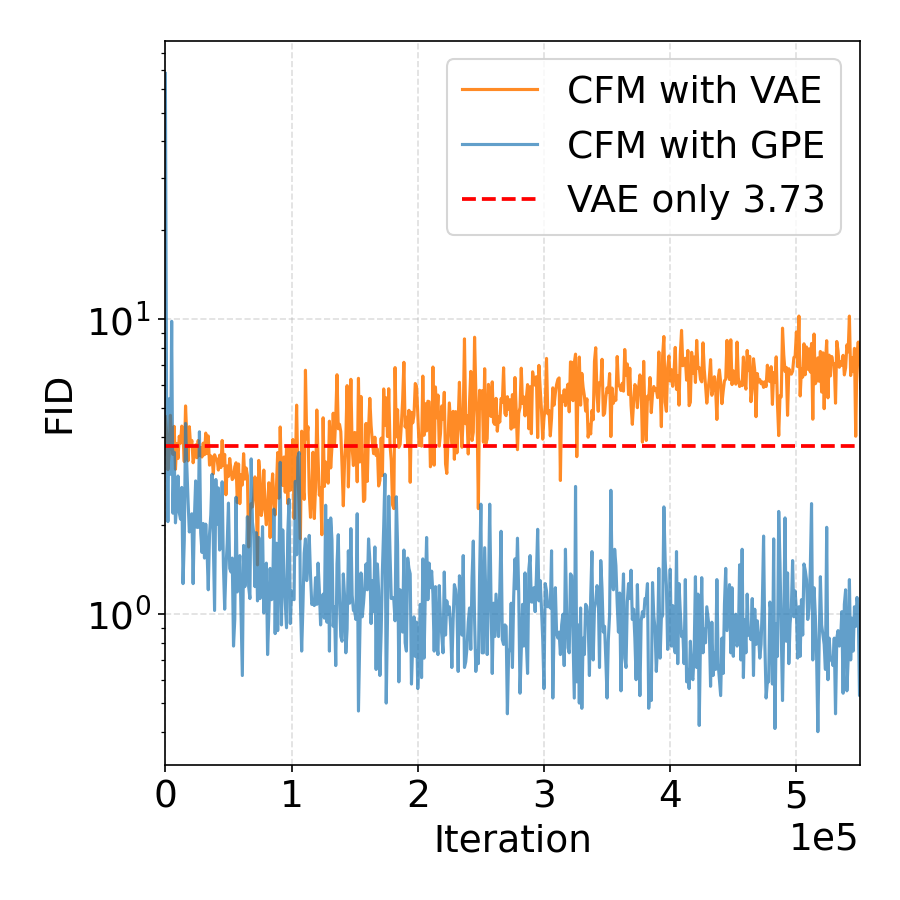}
    \caption{CelebA}
    \label{fig:gpe-vae-celeba}
\end{subfigure}
\caption{ Convergence comparison of conditional flow matching using GPE-based and VAE-based encoders.
The vertical axis shows FID (lower is better), and the horizontal axis shows training iterations.
Both VAE and GPE are trained for 24 hours of wall-clock time on the same hardware.
The red dotted line in each plot indicates the FID of the VAE without the flow-based model.
Across MNIST, CIFAR10, and CelebA, the GPE curves achieve lower FID than the VAE baselines at comparable iteration counts.
}
\label{fig:gpe-vae-fid}
\end{figure}

\begin{figure}[h!]
    \centering
    \begin{minipage}[b]{0.3\textwidth}
        \centering
            \includegraphics[width=\textwidth]{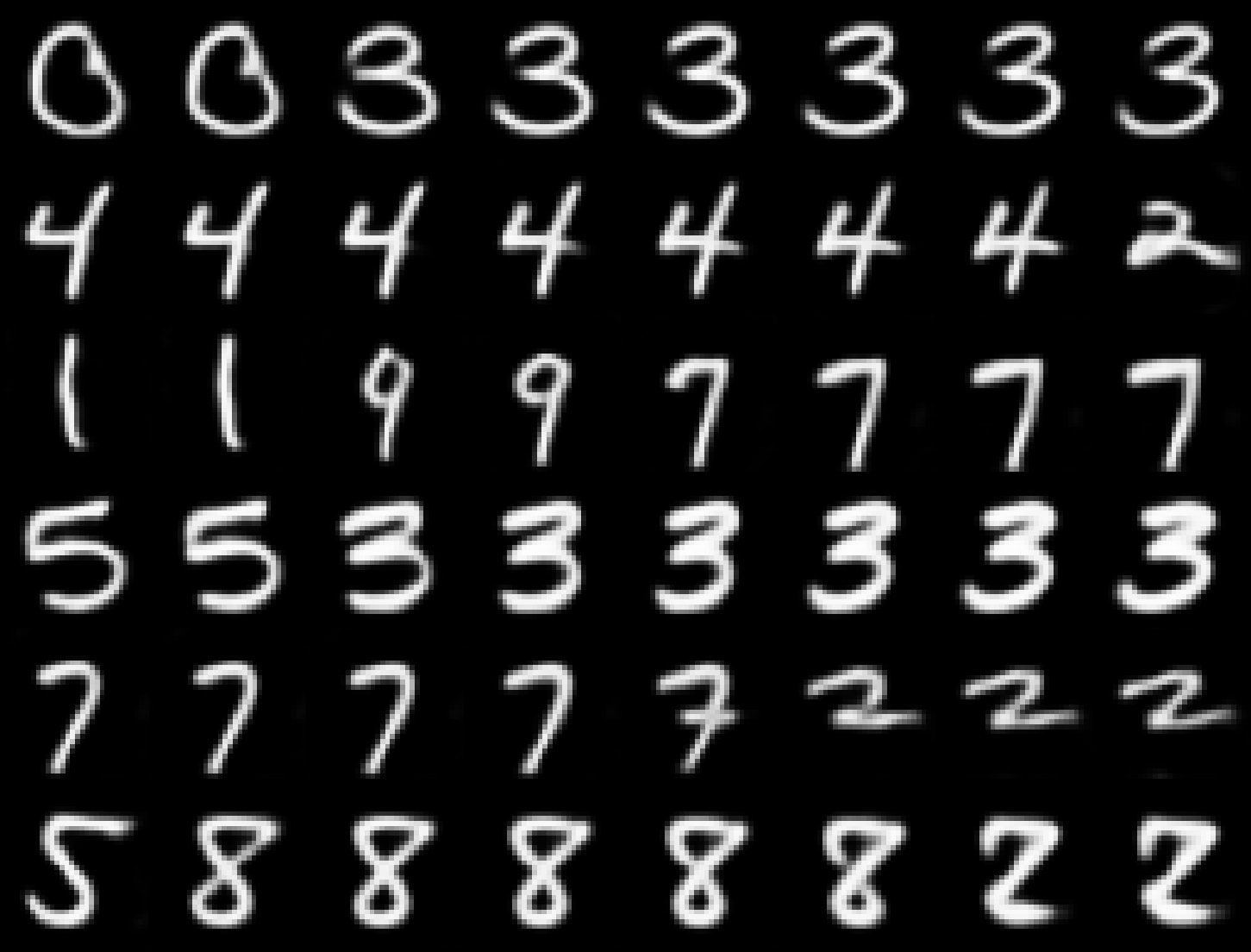}
            \includegraphics[width=\textwidth]{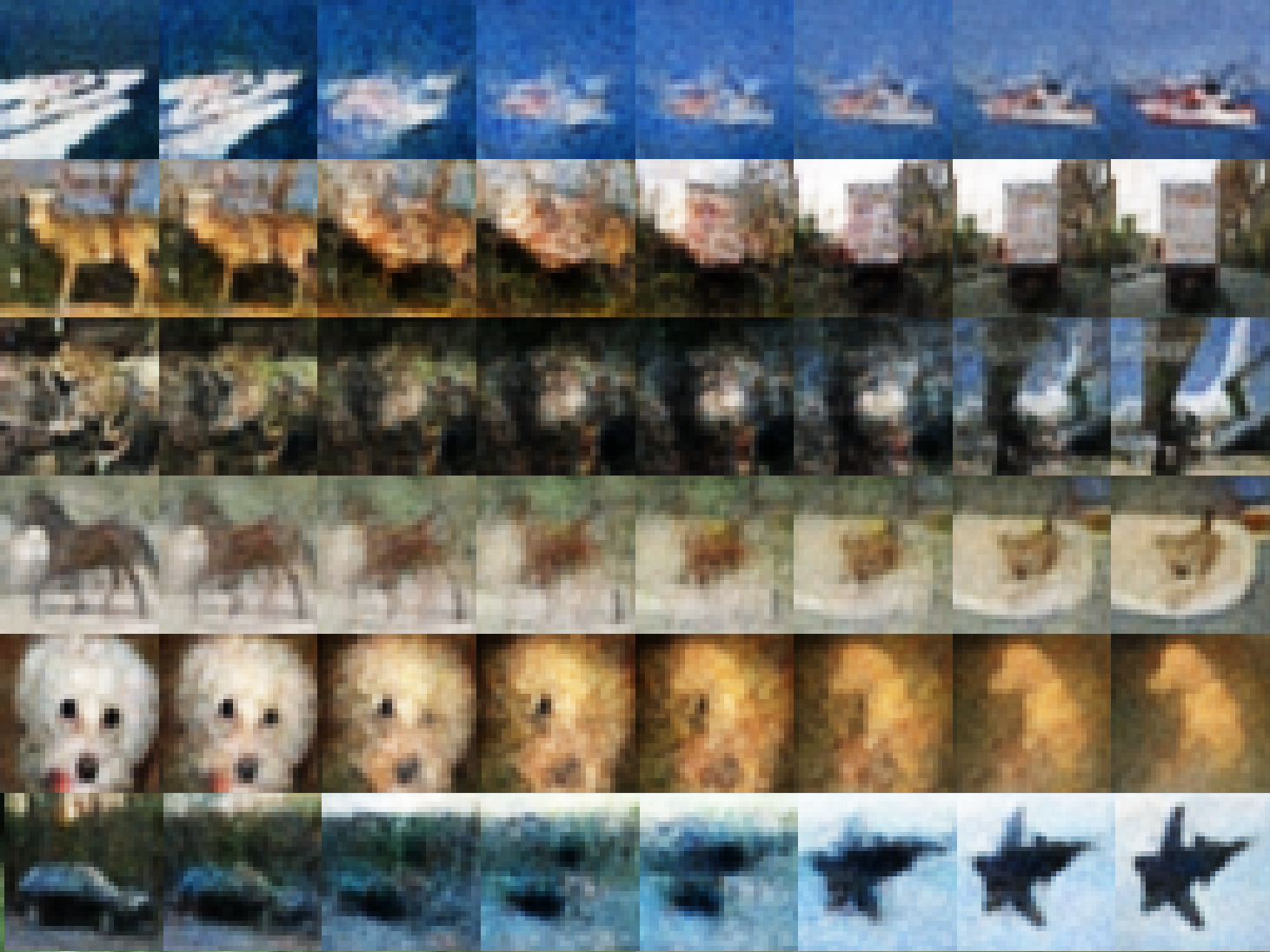}
    \end{minipage}
    \begin{minipage}[b]{0.6\textwidth}
        \centering
        \includegraphics[width=\textwidth]{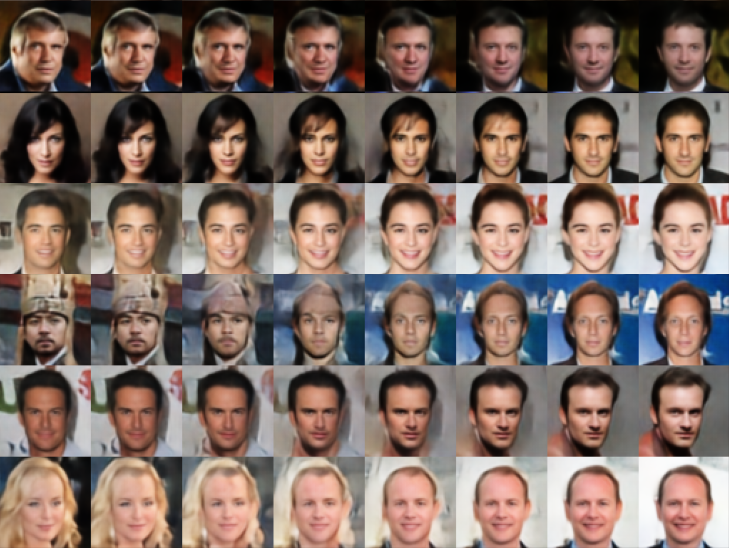}
    \end{minipage}
    
    \begin{minipage}[t]{0.91\textwidth}
        \centering
        \includegraphics[width=\textwidth]{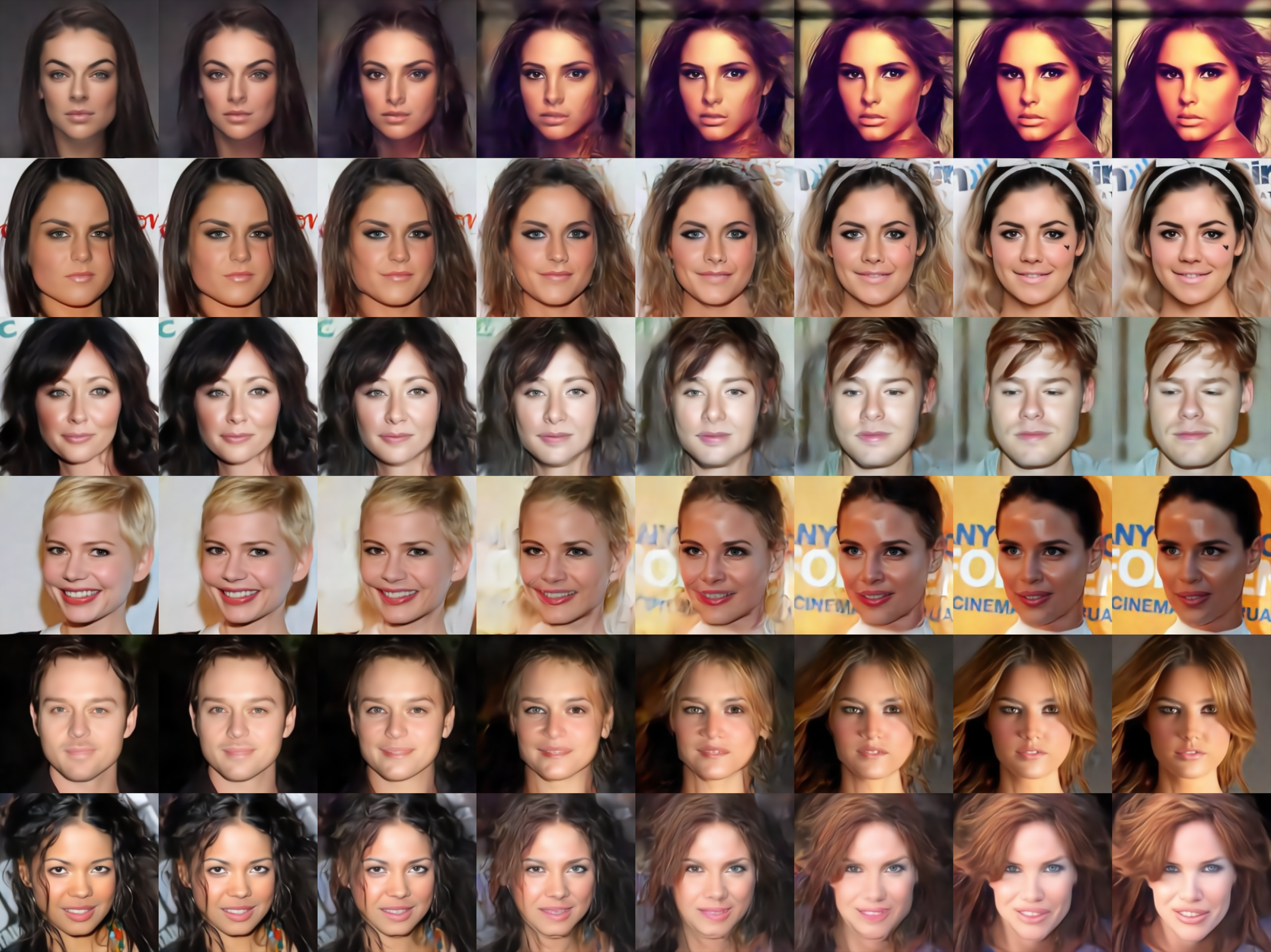}
    \end{minipage}
    \caption{
The figure shows the generation results from the GPE-based LGM. Using the diffusion model (specifically the conditional flow matching model) trained in the latent space, the interpolation between two images is generated for various datasets, including MNIST (top left, 1st row), CIFAR10 (top left, 2nd row), CelebA (top right), and CelebA-HQ ($256\times256$, bottom). The training efficiency for the encoder/decoder and diffusion model for all datasets are displayed in Table~\ref{tab:pretrained_encoders}, Figure~\ref{fig:comparison-gpe-vae}, and Figure~\ref{fig:gpe-vae-fid}.
}

    \label{fig:generation-results}
\end{figure}

\section{Conclusion}\label{sec:conclusion}
We introduce a geometry-preserving encoder/decoder framework for latent generative models. 
Theoretical analysis and numerical experiments demonstrate that this geometric constraint 
improves both training speed and reconstruction accuracy.

While these results focus on standard encoder/decoder networks, the framework could 
potentially adapt to larger transformer architectures. Key challenges include defining 
meaningful distance metrics on discrete data manifolds and adapting the framework to 
encoder-only or decoder-only models. A natural future direction is extending this 
geometric approach to multi-modal tasks that integrate text and images.

\section*{Acknowledgments and Disclosure of Funding}

WL acknowledges funding from the National Institute of Standards and Technology (NIST) under award number 70NANB22H021 and the startup fund from the Ohio State University. RCWO was supported by NSF GRFP-2237827 and NSF-DMS:1944925. DZ acknowledge funding from the Kunshan
Municipal Government research funding. JC was supported by NSF-DMS:1944925, NSF-CCF:2212318, the Alfred P. Sloan Foundation, and the Albert and Dorothy Marden Professorship, and GL acknowledges funding from NSF-DMS awards 2124913 and 2427955. We are grateful to the Minnesota Super-computing Institute for allowing use of their computational resources.

\appendix
\onecolumn


\section{Proof of \Cref{thm:weak-bilip}}

To show the proof of \Cref{thm:weak-bilip}, we first recall the Johnson-Lindenstrauss (JL) lemma.
\begin{lemma}[JL Lemma \cite{johnson1984extensions}]
Let $0 < \epsilon < 1$ and let $X$ be a set of $N$ points in $\mathbb{R}^d$. If $k$ is a positive integer such that
$k \geq \frac{8 \ln N}{\epsilon^2}$
then there exists a linear map $f: \mathbb{R}^d \to \mathbb{R}^k$ such that for all $u, v \in X$
\[(1 - \epsilon) \|u - v\|_2^2 \leq \|f(u) - f(v)\|_2^2 \leq (1 + \epsilon) \|u - v\|_2^2.\]
\end{lemma}
Next, we present the proof of \Cref{thm:weak-bilip}
\begin{proof}[Proof of \Cref{thm:weak-bilip}]
Let $\epsilon > 0$ be a covering radius. Because $\mathcal{M}$ is compact and has intrinsic dimension $m$, we can construct a finite $\epsilon$-net $\mathcal{N} = \{p_1, \dots, p_N\} \subset \mathcal{M}$ such that for every $x \in \mathcal{M}$, there exists some $p_i \in \mathcal{N}$ with $\|x - p_i\| \le \epsilon$. The size of this net is bounded by $N \le C_{\mathrm{cov}} (1/\epsilon)^m$.

By the Johnson-Lindenstrauss (JL) lemma, for any tolerance $\delta \in (0,1)$, there exists a linear map $P: \mathbb{R}^D \to \mathbb{R}^d$ that preserves the pairwise distances of the finite set $\mathcal{N}$
\begin{equation}\label{eq:jl-net}
(1-\delta)\|p_i - p_j\| \ \le\ \|P p_i - P p_j\| \ \le\ (1+\delta)\|p_i - p_j\| \qquad \forall p_i, p_j \in \mathcal{N},
\end{equation}
provided $d \ge 8 \delta^{-2} \log N$. We define our global encoder simply as the linear map $T(x) := P(x)$, which guarantees that $T$ is smooth. 

To guarantee the weak bi-Lipschitz bounds, we constrain our JL tolerance $\delta$ and our net radius $\epsilon$. First, we enforce the multiplicative bounds by setting $1+\delta \le \alpha$ and $1-\delta \ge \alpha^{-1}$, which is satisfied by choosing $\delta = 1 - \alpha^{-1}$. 

Next, we evaluate the distances for any arbitrary pair $x, y \in \mathcal{M}$. Let $p_x, p_y \in \mathcal{N}$ be their nearest neighbors in the net, such that $\|x - p_x\| \le \epsilon$ and $\|y - p_y\| \le \epsilon$. Using the triangle inequality, the distance between the net points is bounded by
\[
\|x-y\| - 2\epsilon \ \le\ \|p_x - p_y\| \ \le\ \|x-y\| + 2\epsilon.
\]

Because $P$ is a linear map represented by a $d \times D$ matrix, its effect on continuous spaces is governed by its operator norm $L_P := \sup_{v \neq 0} \frac{\|Pv\|}{\|v\|}$. For a  JL construction, the entries of $P$ are drawn independently from a distribution with zero mean and variance $1/d$. The operator norm is upper-bounded by the Frobenius norm $\|P\|_F = \sqrt{\sum_{i,j} P_{ij}^2}$. Since the expected squared Frobenius norm is simply $\mathbb{E}[\|P\|_F^2] = d \cdot D \cdot \frac{1}{d} = D$, we have $L_P \le \|P\|_F \le \mathcal{O}(\sqrt{D})$ with high probability. This bound guarantees that for any local variation, $\|P(x - p_x)\| \le L_P \|x - p_x\| \le L_P \epsilon$.

Applying the upper bound of \eqref{eq:jl-net} and the triangle inequality, we have
\begin{align*}
\|T(x) - T(y)\| &\le \|P(p_x) - P(p_y)\| + \|P(x - p_x)\| + \|P(y - p_y)\| \\
&\le \alpha \|p_x - p_y\| + 2 L_P \epsilon \\
&\le \alpha \big(\|x-y\| + 2\epsilon\big) + 2 L_P \epsilon = \alpha\|x-y\| + 2\epsilon(\alpha + L_P).
\end{align*}
Similarly, for the lower bound
\begin{align*}
\|T(x) - T(y)\| &\ge \|P(p_x) - P(p_y)\| - \|P(x - p_x)\| - \|P(y - p_y)\| \\
&\ge \alpha^{-1} \|p_x - p_y\| - 2 L_P \epsilon \\
&\ge \alpha^{-1} \big(\|x-y\| - 2\epsilon\big) - 2 L_P \epsilon = \alpha^{-1}\|x-y\| - 2\epsilon(\alpha^{-1} + L_P).
\end{align*}

To satisfy the additive error constraints of the weak $\alpha$-bi-Lipschitz property (e.g., bounding the additive terms by $\alpha - 1$ and $1 - \alpha^{-1}$ respectively), we choose the net radius to be sufficiently small
\[
\epsilon = \frac{\alpha - 1}{2(\alpha + L_P)}.
\]
With this choice of $\epsilon$ and $\delta$, the weak $\alpha$-bi-Lipschitz bounds are globally satisfied for all $x,y \in \mathcal{M}$. Substituting $\epsilon$ and $\delta$ into the JL dimension requirement yields
\[
d \ge \frac{8}{(1-\alpha^{-1})^2} \log \Big( C_{\mathrm{cov}} \Big(\frac{2(\alpha + L_P)}{\alpha-1}\Big)^m \Big),
\]
which simplifies to $d = \mathcal{O}\big(\alpha^2 m \log(\frac{\alpha + L_P}{\alpha-1})\big)$, completing the proof.
\end{proof}

\section{Change of variables in Riemannian manifolds}
In this section, we briefly review the change of variables formula for Riemannian manifolds. To properly evaluate integrals over an embedded manifold, we must account for the local geometric distortion introduced by the embedding map. For a foundational treatment of this topic, see Section 16 of \cite{john2012introduction}. The following lemma formally establishes this volume transformation for a smooth embedding $T:\M \to \R^d$.
\begin{proposition}\label{lem:change-of-var}
Let $T: \mathcal{M} \to \mathbb{R}^d$ be a smooth embedding of an $m$-dimensional Riemannian manifold $(\mathcal{M}, g)$. Let $J_T(p) = \sqrt{\det(dT_p^\top dT_p)}$ denote the Jacobian of $T$, where $dT_p: \mathcal{T}_p\mathcal{M} \to \mathbb{R}^d$ is the differential of $T$ evaluated at $p \in \mathcal{M}$. Then, for any integrable function $f: T(\mathcal{M}) \to \mathbb{R}$, the integral change of variables formula is given by
\begin{equation*}
\int_{\mathcal{M}} f(T(p)) \, d\mathrm{vol}_{\mathcal{M}}(p) = \int_{T(\mathcal{M})}  \frac{f(q)}{J_T(T^{-1}(q))} \, d\mathrm{vol}_{T(\mathcal{M})}(q).
\end{equation*}
\end{proposition}

\begin{proof}
We first establish the identity locally. Fix a point $p \in \mathcal{M}$ and choose an orthonormal basis $\{v_1, \dots, v_m\}$ for the tangent space $\mathcal{T}_p\mathcal{M}$ equipped with the intrinsic metric $g_p$. We use the exponential map $\exp_p$ to define a local parameterization $P(y) = \exp_p\left( \sum_{i=1}^m y^i v_i \right)$ that maps a flat coordinate grid in $\mathbb{R}^m$ onto a local neighborhood $\mathcal{U} \subset \mathcal{M}$ near $p$.

For any function $h$ supported in this local neighborhood, its integral with respect to the Riemannian volume measure $d\mathrm{vol}_{\mathcal{M}}$ is computed by pulling the domain back to the flat coordinate space $\mathbb{R}^m$
\begin{equation}\label{eq:local_M}
\int_{\mathcal{M}} h(p) \, d\mathrm{vol}_{\mathcal{M}}(p) = \int_{\mathbb{R}^m} h(P(y)) \sqrt{\det(g_{ij}(y))} \, dy^1 \dots dy^m.
\end{equation}
Here, the term $\sqrt{\det(g_{ij}(y))}$ acts as a correction factor. If we take a perfectly square coordinate box in $\mathbb{R}^m$, the parameterization $P$ maps it to a curved, skewed patch on $\mathcal{M}$. The edges of this mapped patch are spanned by the tangent vectors $\frac{\partial P}{\partial y^i}$. The $(i,j)$-th entry of the metric matrix $g_{ij}(y)$ is defined as the inner product of these tangent vectors, meaning $g_{ij}(y)$ is the Gram matrix of these local basis vectors. The square root of its determinant measures the physical volume of the $m$-dimensional parallelepiped they form.

Now, we consider the same neighborhood embedded in the target space, parameterized by the composition $(T\circ P)(y)$. To integrate over the embedded manifold $T(\mathcal{M})$, we pull its volume measure back to the same flat coordinate grid
\begin{equation}\label{eq:prr1}
    \int_{T(\mathcal{M})} h(q) \, d\mathrm{vol}_{T(\mathcal{M})}(q) = \int_{\mathbb{R}^m} h(T(P(y))) \sqrt{\det(\tilde{g}_{ij}(y))} \, dy^1\dots dy^m.
\end{equation}
Here, $\tilde{g}_{ij}(y)$ is the new metric tensor induced in the target space. By the chain rule, the tangent vectors spanning the coordinate box in the target space are $\frac{\partial(T \circ P)}{\partial y^i} = dT_{P(y)}\left(\frac{\partial P}{\partial y^i}\right)$. The entries of the induced metric tensor $\tilde{g}_{ij}(y)$ are the Euclidean inner products of these mapped vectors in $\mathbb{R}^d$
\begin{equation*}
\tilde{g}_{ij}(y) = \left\langle dT_{P(y)} \left( \frac{\partial P}{\partial y^i} \right), dT_{P(y)} \left( \frac{\partial P}{\partial y^j} \right) \right\rangle_{\mathbb{R}^d}.
\end{equation*}
Geometrically, the linear map $dT_{P(y)}$ scales the volume of any tangent parallelepiped by exactly its Jacobian, $J_T(P(y)) = \sqrt{\det\left(dT^\top_{P(y)} dT_{P(y)}\right)}$. Therefore, the volume of the mapped parallelepiped relates to the volume of the base parallelepiped by
\begin{equation*}
    \sqrt{\det(\tilde{g}_{ij}(y))} = J_T(P(y))\sqrt{\det(g_{ij}(y))}.
\end{equation*}
Substituting this relationship back into the coordinate integral \eqref{eq:prr1} yields
\begin{equation}\label{eq:prr3}
    \int_{T(\mathcal{M})} h(q) \, d\mathrm{vol}_{T(\mathcal{M})}(q) 
    = \int_{\mathbb{R}^m} h(T(P(y))) J_T(P(y)) \sqrt{\det(g_{ij}(y))} \, dy^1\dots dy^m.
\end{equation}
Comparing \eqref{eq:prr3} with \eqref{eq:local_M}, the right-hand side is the coordinate integration of the function $(h \circ T) \cdot J_T$ over the base manifold $\mathcal{M}$. Pushing the integral back from local coordinates to $\mathcal{M}$ gives the local relation
\begin{equation}\label{eq:local_identity}
    \int_{T(\mathcal{M})} h(q) \, d\mathrm{vol}_{T(\mathcal{M})}(q) = \int_{\mathcal{M}} h(T(p)) J_T(p) \, d\mathrm{vol}_{\mathcal{M}}(p).
\end{equation}

To extend this to a global statement for any integrable function $f: T(\mathcal{M}) \to \mathbb{R}$, let $\{\mathcal{U}_\alpha\}$ be an open cover of $\mathcal{M}$ by local neighborhoods where the parameterization $P$ is valid, and let $\{\psi_\alpha\}$ be a smooth partition of unity subordinate to $\{\mathcal{U}_\alpha\}$ such that $\sum_\alpha \psi_\alpha(p) = 1$ for all $p \in \mathcal{M}$. 
Since $T$ is a smooth embedding, it is a diffeomorphism onto its image $T(\mathcal{M})$. The collection $\{T(\mathcal{U}_\alpha)\}$ forms an open cover of $T(\mathcal{M})$, and the compositions $\{\psi_\alpha \circ T^{-1}\}$ define a partition of unity subordinate to this cover. We decompose the global integral of $f$ by applying the local relation \eqref{eq:local_identity} to each localized component
\begin{align*}
\int_{T(\mathcal{M})} \frac{f(q)}{J_T(T^{-1}(q))} \, d\mathrm{vol}_{T(\mathcal{M})}(q) &= \sum_\alpha \int_{T(\mathcal{M})} (\psi_\alpha \circ T^{-1})(q) \frac{f(q)}{J_T(T^{-1}(q))} \, d\mathrm{vol}_{T(\mathcal{M})}(q) \\
&= \sum_\alpha \int_{\mathcal{M}} \psi_\alpha(p) \frac{f(T(p))}{J_T(p)} J_T(p) \, d\mathrm{vol}_{\mathcal{M}}(p) \\
&= \int_{\mathcal{M}} \left( \sum_\alpha \psi_\alpha(p) \right) f(T(p)) \, d\mathrm{vol}_{\mathcal{M}}(p) \\
&= \int_{\mathcal{M}} f(T(p)) \, d\mathrm{vol}_{\mathcal{M}}(p).
\end{align*}
This completes the proof.
\end{proof}

    \section{Proof of \Cref{prop:compact-minimizer}}

\begin{proof}
    We first establish that $\mathcal{T}_\beta^k$ is compact by applying the Arzelà-Ascoli theorem, demonstrating that the families of functions $\{T\}_{T \in \mathcal{T}_\beta^k}$ and their differentials $\{dT\}_{T \in \mathcal{T}_\beta^k}$ are uniformly bounded and equicontinuous.

    Let $d_{\mathcal{M}}$ denote the intrinsic geodesic distance on the compact Riemannian manifold $\mathcal{M}$. For any $T \in \mathcal{T}_\beta^k$ and any $x \in \mathcal{M}$, applying the Mean Value Theorem on manifolds alongside the bound $\|dT\|_\infty \leq \beta$ and the fixed point constraint $T(x_0) = 0$, we have:
    \begin{equation*}
        \|T(x)\|_{\mathbb{R}^d} = \|T(x) - T(x_0)\|_{\mathbb{R}^d} \leq \sup_{z \in \mathcal{M}} \|dT(z)\|_{\mathrm{op}} \, d_{\mathcal{M}}(x, x_0) \leq \beta \operatorname{diam}_{\mathcal{M}}(\mathcal{M}).
    \end{equation*}
    Because $\mathcal{M}$ is compact, its geodesic diameter is finite, which guarantees that the family $\{T\}$ is uniformly bounded. Additionally, the uniform bound on the first derivative ($\|dT\|_\infty \leq \beta$) ensures that $\{T\}$ is uniformly Lipschitz, and therefore equicontinuous. 
    
    Next, we evaluate the family of differentials $\{dT\}$. By definition, this family is uniformly bounded since $\|dT\|_\infty \leq \beta$. The bound on the second derivative, $\|D^2 T\|_\infty \leq \beta$, strictly limits the rate of change of the first derivative. This guarantees that $\{dT\}$ is also uniformly Lipschitz and equicontinuous. Since both $\{T\}$ and $\{dT\}$ are uniformly bounded and equicontinuous, the Arzelà-Ascoli theorem guarantees that any sequence in $\mathcal{T}_\beta^k$ contains a subsequence converging uniformly in the $C^1$ topology to a limit function $T^* \in C^1(\mathcal{M}, \mathbb{R}^d)$.

    To verify that $\mathcal{T}_\beta^k$ is closed, note that the pointwise constraint $T^*(x_0) = 0$ is naturally preserved under uniform convergence. Because $T^*$ is the $C^1$-uniform limit of a sequence of functions with $\beta$-Lipschitz first derivatives, its differential $dT^*$ is also $\beta$-Lipschitz. Thus, $T^* \in C^{1,1}(\mathcal{M}, \mathbb{R}^d)$ and satisfies $\max(\|dT^*\|_\infty, \|D^2 T^*\|_\infty) \leq \beta$ almost everywhere. Therefore, $T^* \in \mathcal{T}_\beta^k$, proving that the set is compact.

    Next, we bound the GME cost over this compact set. From the definition of $ \GME $ in \eqref{eq:gme-cost}, we have
    \begin{align*}
        \GME(T)
        &= \mathbb{E}_{(x,x')\sim \mu^2} \left[ \log\left(\frac{1+\|T(x)-T(x')\|^2}{1+\|x-x'\|^2}\right)^2\right] \\
        &\leq \mathbb{E}_{(x,x')\sim \mu^2} \left[ \log\left( \max\left\{
        \frac{1 + \beta^2\|x-x'\|^2}{1+\|x-x'\|^2},
        1 + \|x-x'\|^2
        \right\} \right)^2 \right] \\
        &\leq \left(\log\left( \max\left\{
        \beta^2, 1 + \operatorname{diam}(\mathcal{M})^2
        \right\} \right)\right)^2
        \leq 4 (\log(\beta))^2,
    \end{align*}
    where we used the $ \beta $-Lipschitz condition, the boundedness of $ \mathcal{M} $, and the assumption in \cref{eq:assmpt-beta-diam}.

    Finally, to establish the existence of a minimizer, we show that $\GME$ is continuous. Define the integrand $F_T(x, x') := \log\left( \frac{1 + \|T(x) - T(x')\|^2}{1 + \|x - x'\|^2} \right)^2$. Let $ T_n \to T $ uniformly in $ \mathcal{T}_\beta^k $. Uniform convergence implies $F_{T_n}(x, x') \to F_T(x, x')$ uniformly on $\mathcal{M} \times \mathcal{M}$. Moreover, using the $\beta$-Lipschitz bound, we have a uniform bound $|F_{T_n}(x, x')| \leq 4 (\log(\beta))^2$, which is integrable. By the dominated convergence theorem, $\lim_{n \to \infty} \GME(T_n, \mu) = \GME(T, \mu)$, confirming that $ \GME $ is continuous with respect to $ T $ under the uniform topology.
    Because $ \GME $ is a continuous functional on the compact set $ \mathcal{T}_\beta^k $, the Extreme Value Theorem ensures that it attains its minimum, confirming the existence of a minimizer $T^* \in \mathcal{T}_\beta^k$.
\end{proof}

\section{Proof of \Cref{prop:hessian-chain}}

\begin{proof}
By applying the chain rule for the second derivative, the quadratic form of the parameter-space Hessian along any direction $v \in \mathbb{R}^p$ is given by
\begin{equation}\label{eq:second-var-chain}
    v^\top H_\theta v = \delta^2 F(T_\theta)(J_\theta v, J_\theta v) + \langle \nabla_{L^2(\mathcal{M})} F(T_\theta), \nabla_\theta^2 T_\theta(v,v) \rangle_{L^2(\mathcal{M})}.
\end{equation}
Near a local minimum, the gradient residual term vanishes. Because $J_\theta v \in L^2(\mathcal{M})$, we can substitute the induced variation function $h = J_\theta v$ into the assumed functional bounds, yielding
\begin{equation}
    \lambda_{\min} \|J_\theta v\|_{L^2(\mathcal{M})}^2 \leq v^\top H_\theta v \leq \lambda_{\max} \|J_\theta v\|_{L^2(\mathcal{M})}^2.
\end{equation}
Substituting the operator bounds of the Jacobian, $\sigma_{\min}^2 \|v\|^2 \leq \|J_\theta v\|_{L^2(\mathcal{M})}^2 \leq \sigma_{\max}^2 \|v\|^2$, directly provides the parameter-space curvature bounds $\lambda_{\min} \sigma_{\min}^2 \|v\|^2 \leq v^\top H_\theta v \leq \lambda_{\max} \sigma_{\max}^2 \|v\|^2$. 

From  convex optimization theory, an objective with an $L$-Lipschitz gradient and $\mu$-strong convexity (i.e., $\mu I \preceq H_\theta \preceq L I$) requires a step size $\eta < 2/L$ for stable gradient descent, yielding a convergence rate proportional to $1 - \frac{\mu}{L}$. Substituting $\mu = \lambda_{\min} \sigma_{\min}^2$ and $L = \lambda_{\max} \sigma_{\max}^2$ completes the proof.
\end{proof}

\section{The first and second variations of GME cost}

To derive the explicit formulas of the first and second variations of GME cost, we will assume general form on cost functions $c_X$ and $c_Y$. Throughout the proofs, we assume the cost functions $c_X$ and $c_Y$ take the form
\begin{align*}
    c_X(x,x') = \eta_X(x-x'),\quad c_Y(y,y') = \eta_Y(y-y')
\end{align*}
where $\eta_X:X\rightarrow \mathbb{R}$ and $\eta_Y:Y \rightarrow \mathbb{R}$ are differentiable functions. 
In the following, for simplicity, we denote by
    \begin{align*}
        \eta_X &= \eta_X(x - x')\\
        \eta_Y &= \eta_Y(T(x) - T(x'))\\
        \nabla \eta_Y &= \nabla \eta_Y(T(x) - T(x'))\\
        \nabla^2\eta_Y &= \nabla^2 \eta_Y(T(x) - T(x'))\\
        h_{x,x'} &= h(x) - h(x').
    \end{align*} 
Thus, the GME cost can be written as
\begin{align*}
    \GME(T) = \int_{\mathcal{M}^2} \Bigl(\eta_X - \eta_Y\Bigr)^2 d\mu d\mu.
\end{align*}
\begin{proposition}\label{prop:first-second-var}
    The first and second variations of the GME cost functional at $T:\mathcal{M}\rightarrow \mathbb{R}^d$ in the direction of $h:\mathcal{M}\rightarrow \mathbb{R}^d$ take the form
  \begin{align*}
    &\delta \GME(T)(h) =
    2 \int_{\mathcal{M}^2}\Big( \eta_Y- \eta_X \Big) \langle \nabla \eta_Y, h_{x,x'}\rangle d\mu(x)d\mu(x')
  \end{align*}
  and
  \begin{align*}
    \delta^2 \GME(T)(h,h) 
    =2 \int_{\mathcal{M}^2}  \langle \nabla \eta_Y,h_{x,x'}\rangle^2 +  (\eta_Y -\eta_X )\langle \nabla^2 \eta_Y h_{x,x'}, h_{x,x'}\rangle d\mu(x) d\mu(x').
  \end{align*}
  respectively. 
\end{proposition}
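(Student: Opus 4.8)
\textbf{Proof proposal for Proposition~\ref{prop:first-second-var}.}

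The plan is to compute the first and second variations directly from the definition in \eqref{eq:def-first} and \eqref{eq:def-second}, by substituting $T+th$ into the cost functional and differentiating in $t$ at $t=0$. Writing $G(t) := \GME(T+th) = \int_{\mathcal M^2}\bigl(\eta_X - \eta_Y(T_{x,x'}+t\,h_{x,x'})\bigr)^2\, d\mu(x)\,d\mu(x')$, the first variation is $G'(0)$ and the second variation is $G''(0)$. The key elementary observation is that since $c_Y(y,y')=\eta_Y(y-y')$ depends only on differences, the perturbation of the argument of $\eta_Y$ is exactly $t\,h_{x,x'} = t(h(x)-h(x'))$, and the chain rule applies to the finite-dimensional map $\eta_Y:\mathbb R^d\to\mathbb R$.

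First I would differentiate once: by the chain rule, $\frac{d}{dt}\eta_Y(T_{x,x'}+t\,h_{x,x'}) = \langle \nabla\eta_Y(T_{x,x'}+t\,h_{x,x'}), h_{x,x'}\rangle$, so
\[
G'(t) = \int_{\mathcal M^2} 2\bigl(\eta_Y(T_{x,x'}+t\,h_{x,x'}) - \eta_X\bigr)\,\langle \nabla\eta_Y(T_{x,x'}+t\,h_{x,x'}), h_{x,x'}\rangle\, d\mu(x)\,d\mu(x').
\]
Setting $t=0$ gives the stated formula for $\delta\GME(T)(h)$. For the second variation I would differentiate $G'(t)$ once more, applying the product rule to the two $t$-dependent factors. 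The first factor contributes $\langle\nabla\eta_Y,h_{x,x'}\rangle$ again (giving the $\langle\nabla\eta_Y,h_{x,x'}\rangle^2$ term after multiplying by the $\langle\nabla\eta_Y,h_{x,x'}\rangle$ already present), and the second factor contributes $\langle \nabla^2\eta_Y\, h_{x,x'}, h_{x,x'}\rangle$ (multiplied by the $(\eta_Y-\eta_X)$ prefactor). Evaluating at $t=0$ and collecting the overall factor of $2$ yields
\[
\delta^2\GME(T)(h,h) = 2\int_{\mathcal M^2} \langle\nabla\eta_Y, h_{x,x'}\rangle^2 + (\eta_Y-\eta_X)\langle\nabla^2\eta_Y\, h_{x,x'}, h_{x,x'}\rangle\, d\mu(x)\,d\mu(x').
\]

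The only genuine technical point is justifying the interchange of differentiation and integration, i.e.\ differentiating under the integral sign. This is where I would invoke compactness of $\mathcal M$ together with the regularity assumptions: $\eta_X,\eta_Y$ are (twice) differentiable, $T$ and $h$ are continuous (indeed Lipschitz or smooth in our setting), so on the compact set $\mathcal M^2$ the integrand and its $t$-derivatives up to second order are uniformly bounded for $t$ in a neighborhood of $0$, and $\mu\times\mu$ is a finite measure. Hence the dominated convergence theorem (or the standard theorem on differentiation under the integral with a uniform bound on a compact parameter interval) applies, legitimizing both differentiations. I expect this domination step to be the main — though routine — obstacle; the algebra of the chain and product rules is straightforward.
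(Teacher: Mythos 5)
Your proposal is correct and takes essentially the same approach as the paper: you compute $G'(0)$ and $G''(0)$ directly via the chain and product rules, whereas the paper performs the equivalent Taylor expansion of the integrand in $t$ and reads off the linear coefficients; the underlying algebra is identical. The one thing you add is an explicit justification (dominated convergence on compact $\mathcal{M}^2$) for differentiating under the integral sign, which the paper's proof leaves implicit, so your write-up is if anything slightly more careful.
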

\begin{proof}
    Let $h:\mathcal{M} \rightarrow \mathbb{R}^d$ be a function, and let $t\in \mathbb{R}$. Since $\eta_Y$ is assumed to be differentiable, it follows that
    \begin{equation}\label{eq:prop-proof-1}
        \eta_Y((T+th)_{x,x'}) = \eta_Y(T_{x,x'} + t h_{x,x'}) = \eta_Y + t \langle \nabla \eta_Y, h_{x,x'} \rangle + O(t^2).
    \end{equation}
    Using \eqref{eq:prop-proof-1}, we have
    \begin{align*}
        &\GME(T+th;\mu)\\
        =&\int_{\mathcal{M}^2}  \Big( \eta_X- \eta_Y((T+th)_{x,x'}) \Big)^2 d\mu(x)d\mu(x')\\
        =&\int_{\mathcal{M}^2}  \Big( \eta_X- \eta_Y
        - t \langle \nabla \eta_Y, h_{x,x'} \rangle + O(t^2)\Big)^2 d\mu(x)d\mu(x')\\
        =&\int_{\mathcal{M}^2}  \Big( \eta_X- \eta_Y\Big)^2 
        - t  \bigg(2 \Big( \eta_X- \eta_Y\Big) \langle \nabla \eta_Y, h_{x,x'} \rangle  
        \bigg)d\mu(x)d\mu(x')
        + O(t^2).
    \end{align*}
    From the definition of the first variation in~\eqref{eq:def-first}, we get the formulation for $\delta \GME(T)(h)$.

    Next, we compute the second variation.
    For any $t\in\mathbb{R}$, using \eqref{eq:prop-proof-1} and the first variation $\delta \operatorfont{GM}(T)$,
    \begin{align*}
    &\delta \GME(T+th)(h) \nonumber \\
        =& 2\int_{\mathcal{M}^2} \Big( \eta_Y + t\langle \nabla \eta_Y , h_{x,x'} \rangle +O(t^2) - \eta_X  \Big) \Big\langle \nabla \eta_Y + t \nabla^2 \eta_Y h_{x,x'}+O(t^2),h_{x,x'} \Big\rangle d\mu(x) d\mu(x')  \\
        =&2 \int_{\mathcal{M}^2}   (\eta_Y - \eta_X  )\langle \nabla \eta_Y, h_{x,x'}\rangle + t  \langle \nabla \eta_Y,h_{x,x'}\rangle^2 \\
         &\hspace{4cm}+ t (\eta_Y - \eta_X ) \langle \nabla^2 \eta_Y h_{x,x'}, h_{x,x'}\rangle d\mu(x) d\mu(x') 
         + O(t^2).
    \end{align*}
    From the definition of the second variation in~\eqref{eq:def-second}, we get the formulation for $\delta^2 \GME(T)(h,h)$. 
    This concludes the proof.
    
\end{proof}

Using Proposition~\ref{prop:first-second-var}, by plugging in $\eta_X(x-x') = \log(1+ \|x-x'\|^2)$ and $\eta_Y(y-y') = \log(1+\|y-y'\|^2)$, we get the first and second variations of the GME cost.
\begin{proposition}\label{prop:second-var-gme}
    Given a distribution $\mu \in \mathbb{P}(\mathcal{M})$ and maps $T, h \in L^4(\mathcal{M},\mathbb{R}^d)$, the first and the second variations of the GME cost at $T$ in the direction of $h$ take the following forms:
    \begin{align*}
        \delta \GME(T)(h)
        &= \int_{\mathcal{M}^2} 4 \log\left(\frac{\|T_{x,x'}\|^2+1}{\|x-x'\|^2+1}\right) \frac{\langle T_{x,x'}, h_{x,x'} \rangle}{\|T_{x,x'}\|^2+1} \, d\mu \, d\mu, \\
        \delta^2 \GME(T)(h,h)
        &= \int_{\mathcal{M}^2} 4 \log\left(\frac{\|T_{x,x'}\|^2 + 1}{\|x-x'\|^2 + 1}\right)\left(\frac{\|h_{x,x'}\|^2}{\|T_{x,x'}\|^2 + 1} - 2 \left(\frac{\langle T_{x,x'}, h_{x,x'} \rangle}{\|T_{x,x'}\|^2 + 1}\right)^2 \right) \\
        &\hspace{7cm} 
        + 8 \left(\frac{\langle T_{x,x'}, h_{x,x'} \rangle}{\|T_{x,x'}\|^2 + 1}\right)^2 \, d\mu \, d\mu.
    \end{align*}
    Here, for any function $f: \mathcal{M} \rightarrow \mathbb{R}^d$, we denote $f_{x,x'} = f(x) - f(x')$ for simplicity.
\end{proposition}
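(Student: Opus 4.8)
The plan is to invoke Proposition~\ref{prop:first-second-var}, which already furnishes the first and second variations of $\GME$ for an arbitrary differentiable cost $\eta_Y$, and then simply specialize to $\eta_Y(u)=\log(1+\|u\|^2)$. The only genuine computation is evaluating the gradient and Hessian of this specific function; everything else is a substitution.

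First I would compute, for $u\in\mathbb{R}^d$, by direct differentiation,
\[
\nabla\eta_Y(u)=\frac{2u}{1+\|u\|^2},\qquad
\nabla^2\eta_Y(u)=\frac{2}{1+\|u\|^2}\,I-\frac{4}{(1+\|u\|^2)^2}\,uu^\top,
\]
where the Hessian follows from differentiating $u\mapsto 2u/(1+\|u\|^2)$ componentwise. Note $\eta_Y$ is $C^\infty$, so Proposition~\ref{prop:first-second-var} applies. Evaluating at $u=T_{x,x'}=T(x)-T(x')$ and writing $\eta_X=\log(1+\|x-x'\|^2)$, $\eta_Y=\log(1+\|T_{x,x'}\|^2)$, the scalar quantities entering Proposition~\ref{prop:first-second-var} become
\[
\langle\nabla\eta_Y,h_{x,x'}\rangle=\frac{2\langle T_{x,x'},h_{x,x'}\rangle}{1+\|T_{x,x'}\|^2},\qquad
\langle\nabla^2\eta_Y\,h_{x,x'},h_{x,x'}\rangle=\frac{2\|h_{x,x'}\|^2}{1+\|T_{x,x'}\|^2}-\frac{4\langle T_{x,x'},h_{x,x'}\rangle^2}{(1+\|T_{x,x'}\|^2)^2}.
\]

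Substituting these into the two formulas of Proposition~\ref{prop:first-second-var} and collecting terms — using $\eta_Y-\eta_X=\log\!\bigl((1+\|T_{x,x'}\|^2)/(1+\|x-x'\|^2)\bigr)$, and observing that the leading factor $2$ of the general formulas combines with the factor $2$ from $\nabla\eta_Y$ (resp.\ from $\nabla^2\eta_Y$) to produce the overall constants $4$ and $8$ appearing in the statement — yields exactly the claimed expressions. The only point deserving a remark is that all integrands are $\mu\times\mu$-integrable and that the formal differentiation behind Proposition~\ref{prop:first-second-var} is licensed here: this is immediate from the $L^4$ hypothesis on $T$ and $h$ together with the elementary bounds $\|h_{x,x'}\|^2/(1+\|T_{x,x'}\|^2)\le\|h_{x,x'}\|^2$, $\langle T_{x,x'},h_{x,x'}\rangle^2/(1+\|T_{x,x'}\|^2)^2\le\tfrac14\|h_{x,x'}\|^2$, and $\log(1+\|T_{x,x'}\|^2)\le(1+\|T_{x,x'}\|^2)^{1/2}$, after which Hölder's inequality closes the estimate. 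There is no substantive obstacle: the statement is a direct specialization of the preceding proposition, and the only care required is in bookkeeping the numerical constants.
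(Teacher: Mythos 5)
Your proposal is correct and follows precisely the route the paper takes: the paper states, immediately before the proposition, that it is obtained "Using Proposition~\ref{prop:first-second-var}, by plugging in $\eta_X(x-x') = \log(1+\|x-x'\|^2)$ and $\eta_Y(y-y')=\log(1+\|y-y'\|^2)$," and your computation of $\nabla\eta_Y$, $\nabla^2\eta_Y$, and the resulting substitutions (with the constants $4$ and $8$ checking out) is exactly that specialization. Your additional remark justifying integrability and the interchange of limit and integral under the $L^4$ hypothesis is a welcome bit of rigor that the paper leaves implicit.
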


\section{Proof of \Cref{cor:gme-cost-gd-conv}}\label{appendix:proof-gme-gd}
\begin{proof}
    Let $F(T) = \GME(T)$. From \Cref{thm:log-GME-bound}, the upper bound of the Hessian of $F$ is bounded by $L_k = 8(4\log(\beta_k) + 1)$. Using the descent lemma, we bound the objective at step $k+1$
    \[
        F(T^{(k+1)}) \leq F(T^{(k)}) - \sigma_k \|\nabla_{L^2} F(T^{(k)})\|_{L^2(\mathcal{M})}^2 + \frac{\sigma_k^2 L_k}{2} \|\nabla_{L^2} F(T^{(k)})\|_{L^2(\mathcal{M})}^2.
    \]
    Substituting $\sigma_k = 1/L_k = \frac{1}{8(4\log(\beta_k) + 1)}$ perfectly balances the quadratic penalty, ensuring strict descent
    \[
        F(T^{(k+1)}) \leq F(T^{(k)}) - \frac{1}{2L_k} \|\nabla_{L^2} F(T^{(k)})\|_{L^2(\mathcal{M})}^2.
    \]
    Rearranging this expression gives
    \[
        \frac{1}{2L_k} \|\nabla_{L^2} F(T^{(k)})\|_{L^2(\mathcal{M})}^2 \leq F(T^{(k)}) - F(T^{(k+1)}).
    \]
    Summing this inequality over all steps from $k=0$ to $K$ yields a telescoping sum
    \[
        \sum_{k=0}^{K} \frac{1}{2L_k} \|\nabla_{L^2} F(T^{(k)})\|_{L^2(\mathcal{M})}^2 \leq F(T^{(0)}) - F(T^{(K+1)}).
    \]
    Since $F(T^{(K+1)})$ is bounded below by the global infimum $\inf_T F(T)$, we replace $F(T^{(K+1)})$ with this lower bound. Taking the limit as $K \to \infty$ yields the final result.
\end{proof}

\bibliographystyle{plainnat}
\bibliography{gf}

\end{document}